\newcommand{\Fg}{\mathfrak{g}}
\newcommand{\Fh}{\mathfrak{h}}
\newcommand{\CB}{\mathcal{B}}
\newcommand{\CZ}{\mathcal{Z}}
\newcommand{\CG}{\mathcal{G}}
\newcommand{\CK}{\mathcal{K}}
\newcommand{\CO}{\mathcal{O}}
\newcommand{\CX}{\mathcal{X}}
\newcommand{\CY}{\mathcal{Y}}
\newcommand{\CP}{\mathcal{P}}
\newcommand{\BC}{\mathbb{C}}
\newcommand{\BR}{\mathbb{R}}
\newcommand{\BZ}{\mathbb{Z}}
\newcommand{\BB}{\mathbb{B}}
\newcommand{\BS}{\mathbb{S}}
\newcommand{\mv}{\mathcal{MV}}
\newcommand{\mul}{\mathsf{m}}
\newcommand{\tr}{\mathsf{tr}}
\newcommand{\cl}{\mathsf{cl}}
\newcommand{\IC}{IC}
\newcommand{\bR}{R}
\newcommand{\str}[3]{\star^{#1}_{#2,\,#3}}
\newcommand{\stra}[1]{\star^{#1}_{\mu^{(1)}_{#1},\,\mu^{(2)}_{#1}}}
\newcommand{\Hom}{\mathop{\rm Hom}\nolimits}
\newcommand{\wt}{\mathop{\rm wt}\nolimits}
\newcommand{\Conv}{\mathop{\rm Conv}\nolimits}
\newcommand{\Ad}{\mathop{\rm Ad}\nolimits}
\newcommand{\Rep}{\mathop{\rm Rep}\nolimits}
\newcommand{\Irr}{\mathop{\rm Irr}\nolimits}
\newcommand{\Gr}{\CG r}
\newcommand{\pair}[2]{\langle #1,\,#2 \rangle}
\newcommand{\ti}[1]{\widetilde{#1}}
\newcommand{\ol}[1]{\overline{#1}}
\newcommand{\ud}[1]{\underline{#1}}
\newcommand{\disp}[1]{{\displaystyle #1}}
\newcommand{\ve}{\varepsilon}
\newcommand{\vp}{\varphi}
\newcommand{\bzero}{\mathbf{0}}
\newcommand{\bb}{\mathbf{b}}
\newcommand{\bi}{\mathbf{i}}
\newcommand{\bj}{\mathbf{j}}
\newcommand{\wi}[1]{w^{\bi}_{#1}}
\newcommand{\wj}[1]{w^{\bj}_{#1}}
\newcommand{\vi}[1]{v^{\bi}_{#1}}
\newcommand{\yi}[1]{y^{\bi}_{#1}}
\newcommand{\si}[1]{s_{i_{#1}}}
\newcommand{\Ni}[1]{n^{\bi}_{#1}}
\newcommand{\Nj}[1]{n^{\bj}_{#1}}
\newcommand{\bti}[1]{\beta^{\bi}_{#1}}
\newcommand{\twp}{\,\ti{\times}\,}
\newcommand{\bqed}{\quad \hbox{\rule[-0.5pt]{3pt}{8pt}}}
\renewcommand\section{\@startsection{section}{1}{0pt}
{-3.5ex plus -1ex minus -.2ex}{1.0ex plus .2ex}{\large\bf}}
\renewcommand\subsection{\@startsection{subsection}{1}{0pt}
{2.5ex plus 1ex minus .2ex}{-1em}{\bf}}
\newcommand{\vsp}{\vspace{3mm}}
\theoremstyle{plain}
\newtheorem{thm}{Theorem}[subsection]
\newtheorem{lem}[thm]{Lemma}
\newtheorem{prop}[thm]{Proposition}
\newtheorem{cor}[thm]{Corollary}
\newtheorem*{claim*}{Claim}
\newtheorem{ithm}{Theorem}
\theoremstyle{definition}
\newtheorem{dfn}[thm]{Definition}
\newtheorem*{question}{Question}
\theoremstyle{remark}
\newtheorem{rem}[thm]{Remark}
\newtheorem{ex}[thm]{Example}
\newtheorem*{irem}{Remark}
\begin{document}

\setlength{\baselineskip}{18pt}

\title{\Large\bf 
Polytopal Estimate of Mirkovi\'c-Vilonen polytopes \\
lying in a Demazure crystal}
\author{
 Syu Kato%
 \footnote{Supported in part by JSPS Research Fellowships 
           for Young Scientists (No.\,20740011).} \\
 \small Research Institute for Mathematical Sciences, Kyoto University, \\
 \small Kitashirakawa-Oiwake-cho, Sakyo, Kyoto 606-8502, Japan \\
 \small (e-mail: {\tt kato@kurims.kyoto-u.ac.jp}) \\[5mm]
 Satoshi Naito%
 \footnote{Supported in part by Grant-in-Aid for Scientific Research 
           (No.\,20540006), JSPS.} \\ 
 \small Institute of Mathematics, University of Tsukuba, \\
 \small Tsukuba, Ibaraki 305-8571, Japan \ 
 (e-mail: {\tt naito@math.tsukuba.ac.jp})
 \\[2mm] and \\[2mm]
 Daisuke Sagaki%
 \footnote{Supported in part by JSPS Research Fellowships 
           for Young Scientists (No.\,19740004).} \\ 
 \small Institute of Mathematics, University of Tsukuba, \\
 \small Tsukuba, Ibaraki 305-8571, Japan \ 
 (e-mail: {\tt sagaki@math.tsukuba.ac.jp})
}
\date{}
\maketitle

%=======================%
%     START ABSTRACT    %
%=======================%
%
\begin{abstract} \setlength{\baselineskip}{16pt}
In this paper, we give a polytopal estimate of 
Mirkovi\'c-Vilonen polytopes lying in a Demazure crystal 
in terms of Minkowski sums of extremal Mirkovi\'c-Vilonen polytopes.
As an immediate consequence of this result, 
we provide a necessary (but not sufficient) 
polytopal condition for a Mirkovi\'c-Vilonen polytope 
to lie in a Demazure crystal.
\end{abstract}
%
%=========================%
%     START SECTION 01    %
%=========================%
%
\section{Introduction.}
\label{sec:intro}
This paper is a continuation of our previous one \cite{NS-dp}, and 
our purpose is to give a polytopal estimate of 
Mirkovi\'c-Vilonen polytopes lying in a Demazure crystal 
in terms of Minkowski sums of extremal Mirkovi\'c-Vilonen polytopes.
It should be mentioned that 
as an immediate consequence of this result, 
we can provide an affirmative answer to 
a question posed in \cite[\S4.6]{NS-dp}.

Following the notation and terminology of \cite{NS-dp}, 
we now explain our results more precisely.
Let $G$ be a complex, connected, semisimple algebraic group 
with Lie algebra $\Fg$, $T$ a maximal torus 
with Lie algebra (Cartan subalgebra) $\Fh$, 
$B$ a Borel subgroup containing $T$, 
and $U$ the unipotent radical of $B$;  
by our convention, the roots in $B$ are the negative ones. 
Let $X_{*}(T)$ denote the coweight lattice $\Hom(\BC^{*},\,T)$ 
for $G$, which we regard as an additive subgroup of 
a real form $\Fh_{\BR}:=\BR \otimes_{\BZ} X_{*}(T)$ of $\Fh$. 
Denote by $W$ the Weyl group of $\Fg$, 
with $e$ the identity element and $w_{0}$ 
the longest element of length $m$.
Also, let $\Fg^{\vee}$ denote
the (Langlands) dual Lie algebra of $\Fg$ 
with Weyl group $W$, and let $U_{q}(\Fg^{\vee})$ 
be the quantized universal enveloping algebra 
of $\Fg^{\vee}$ over $\BC(q)$. 

For each dominant coweight 
$\lambda \in X_{\ast}(T) \subset \Fh_{\BR}$, 
let us denote by $\mv(\lambda)$ 
the set of Mirkovi\'c-Vilonen (MV for short) polytopes 
with highest vertex $\lambda$ that is contained in 
the convex hull $\Conv (W \cdot \lambda)$ in $\Fh_{\BR}$
of the Weyl group orbit $W \cdot \lambda$ through $\lambda$, 
and by $\CB(\lambda)$ the crystal basis of 
the irreducible highest weight 
$U_{q}(\Fg^{\vee})$-module $V(\lambda)$ 
of highest weight $\lambda$.
Recall that Kamnitzer \cite{Kam1}, \cite{Kam2} 
proved the existence of an isomorphism of crystals 
$\Psi_{\lambda}$ from the crystal basis $\CB(\lambda)$ 
to the set $\mv(\lambda)$ of MV polytopes, 
which is endowed with the Lusztig-Berenstein-Zelevinsky 
(LBZ for short) crystal structure; he also 
proved the coincidence of this LBZ crystal structure on 
$\mv(\lambda)$ with the Braverman-Finkelberg-Gaitsgory 
(BFG for short) crystal structure on $\mv(\lambda)$.

In \cite{NS-dp}, for each $x \in W$, 
we gave a combinatorial description, 
in terms of the lengths of edges of an MV polytope, 
of the image $\mv_{x}(\lambda) \subset \mv(\lambda)$ 
(resp., $\mv^{x}(\lambda) \subset \mv(\lambda))$ of 
the Demazure crystal $\CB_{x}(\lambda) \subset \CB(\lambda)$ 
(resp., opposite Demazure crystal 
$\CB^{x}(\lambda) \subset \CB(\lambda)$) 
under the isomorphism 
$\Psi_{\lambda} : \CB(\lambda) \rightarrow \mv(\lambda)$ of crystals.
Furthermore, in \cite{NS-dp}, 
we proved that for each $x \in W$, 
an MV polytope $P \in \mv(\lambda)$ lies in 
the opposite Demazure crystal 
$\mv^{x}(\lambda)$ if and only if 
the MV polytope $P$ contains (as a set) 
the extremal MV polytope 
$P_{x \cdot \lambda}$ of weight $x \cdot \lambda$, 
which is identical to the convex hull 
$\Conv(W_{\le x} \cdot \lambda)$ in $\Fh_{\BR}$
of a certain subset $W_{\le x} \cdot \lambda$ of $W \cdot \lambda$ 
(see \S\ref{subsec:extpoly} for details). 
However, we were unable to prove 
an analogous statement for Demazure crystals 
$\CB_{x}(\lambda)$, $x \in W$.
Thus, we posed the following question in 
\cite[\S4.6]{NS-dp}:

\begin{question}
Let us take an arbitrary $x \in W$. 
Are all the MV polytopes lying 
in the Demazure crystal $\mv_{x}(\lambda)$ 
contained (as sets) in the extremal MV polytope 
$P_{x \cdot \lambda} = \Conv(W_{\le x} \cdot \lambda)$ ?
\end{question}

\noindent
Note that the converse statement fails to hold, 
as mentioned in \cite[Remark~4.6.1]{NS-dp}.

In this paper, we provide an affirmative answer to this question. 
In fact, we considerably sharpen the polytopal estimate above of 
MV polytopes lying in a Demazure crystal as follows. 
In what follows, for each dominant coweight 
$\lambda \in X_{\ast}(T) \subset \Fh_{\BR}$, 
we denote by $W_{\lambda} \subset W$ 
the stabilizer of $\lambda$ in $W$, and 
by $W^{\lambda}_{\min} \subset W$ 
the set of minimal (length) coset representatives modulo 
the subgroup $W_{\lambda} \subset W$.
%
%%%%%%%%%%%%%
%%% ithm1 %%%
%%%%%%%%%%%%%
%
\begin{ithm}[$=$ Theorem~\ref{thm:main} combined 
with Proposition~\ref{prop:N3}] \label{ithm1}
Let $\lambda \in X_{\ast}(T) \subset \Fh_{\BR}$ 
be a dominant coweight, and 
let $x \in W^{\lambda}_{\min} \subset W$. 
If an MV polytope $P \in \mv(\lambda)$ lies 
in the Demazure crystal $\mv_{x}(\lambda)$, 
then there exist a positive integer $N \in \BZ_{\ge 1}$ 
and minimal coset representatives 
$x_{1},\,x_{2},\,\dots,\,x_{N} \in 
 W^{\lambda}_{\min} \subset W$ 
such that 
\begin{equation*}
\begin{cases}
x \ge x_{k} \quad 
 \text{\rm for all $1 \le k \le N$;} \\[3mm]
N \cdot P \subseteq
P_{x_1 \cdot \lambda} + 
P_{x_2 \cdot \lambda} + \cdots + 
P_{x_N \cdot \lambda},
\end{cases}
\end{equation*}
where $N \cdot P := 
\bigl\{N v \mid v \in P \bigr\} \subset \Fh_{\BR}$ 
is an MV polytope in $\mv(N \lambda)$, and 
$P_{x_1 \cdot \lambda} + P_{x_2 \cdot \lambda} + \cdots + 
 P_{x_N \cdot \lambda}$ is the Minkowski sum of 
the extremal MV polytopes $P_{x_1 \cdot \lambda}, \,
P_{x_2 \cdot \lambda},\,\dots,\,P_{x_N \cdot \lambda}$.
\end{ithm}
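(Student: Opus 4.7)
The plan is to combine the correspondence between Minkowski sum of MV polytopes and the Cartan product of crystal elements (due to Anderson--Kamnitzer) with the edge-length description of $\mv_{x}(\lambda)$ from \cite{NS-dp}. Throughout, write $b := \Psi_{\lambda}^{-1}(P) \in \CB_{x}(\lambda)$ for the crystal element corresponding to $P$.

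First, reformulate the target statement as a Minkowski-decomposition problem inside $\mv(N\lambda)$. Under $\Psi$, the Minkowski sum of MV polytopes realizes the Cartan component of the crystal tensor product; in particular, the dilation $N \cdot P$ lies in $\mv(N\lambda)$, and any way of writing $N \cdot P$ as a sum of $N$ elements of $\mv(\lambda)$ corresponds to a tensor expression in $\CB(\lambda)^{\otimes N}$. Consequently, it suffices to produce an $N \ge 1$, minimal coset representatives $x_{1},\ldots,x_{N} \in W^{\lambda}_{\min}$ with $x_{k} \le x$, and MV polytopes $P_{k} \in \mv(\lambda)$ with $P_{k} \subseteq P_{x_{k} \cdot \lambda}$, such that $N \cdot P = P_{1} + \cdots + P_{N}$.

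Second, construct such a decomposition by unraveling the Lusztig/edge-length characterization of $\mv_{x}(\lambda)$ established in \cite{NS-dp}. Fix a reduced expression $\bi$ for the longest element $w_{0}$ and record the $\bi$-Lusztig datum $(\Ni{j})$ of $P$; the membership $b \in \CB_{x}(\lambda)$ imposes explicit constraints on this tuple, controlled by a reduced subword for $x$. Dilating by $N$ clears denominators, and one then splits the scaled datum $(N \Ni{j})$ into $N$ integer Lusztig data, each of which defines an MV polytope $P_{k}$ whose vertex set lies in $W_{\le x_{k}} \cdot \lambda$ for some $x_{k} \le x$ in $W^{\lambda}_{\min}$. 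The sufficient size of $N$ is the content of Proposition~\ref{prop:N3}. Finally, the vertex-support condition yields $P_{k} \subseteq \Conv(W_{\le x_{k}} \cdot \lambda) = P_{x_{k} \cdot \lambda}$, and Minkowski-summing over $k$ delivers the desired inclusion $N \cdot P \subseteq P_{x_{1}\cdot\lambda} + \cdots + P_{x_{N}\cdot\lambda}$.

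The main obstacle will be Step 2: the failure of the converse noted in \cite[Remark~4.6.1]{NS-dp} shows that some polytopes satisfying $P \subseteq P_{x\cdot\lambda}$ are \emph{not} in $\mv_{x}(\lambda)$, so the decomposition cannot come from a naive geometric subdivision. The heart of the proof will be a ``rebalancing'' argument that distributes the edge lengths of $P$ among $N$ integer Lusztig data in a way coherent with reduced subwords of $x$, and whose correctness is verified by translating the edge-length constraints for $\mv_{x}(\lambda)$ from \cite{NS-dp} into support bounds for the pieces $P_{k}$.
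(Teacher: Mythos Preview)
Your proposal has a genuine gap at its first step. You invoke a ``correspondence between Minkowski sum of MV polytopes and the Cartan product of crystal elements (due to Anderson--Kamnitzer)'' as if it were an available black box, but no such result exists in \cite{A}, \cite{Kam1}, or \cite{Kam2}. What Kamnitzer proves (Proposition~\ref{prop:Minkowski}) is only that the Minkowski sum of two pseudo-Weyl polytopes has GGMS datum equal to the componentwise sum; there is no statement tying this to the crystal embedding $\iota_{\lambda}:\mv(\lambda)\hookrightarrow\mv(\lambda_{2})\otimes\mv(\lambda_{1})$. In fact, establishing even the inclusion $P\subset P_{1}+P_{2}$ when $\iota_{\lambda}(P)=P_{2}\otimes P_{1}$ and $P_{2}$ is extremal is precisely Theorem~\ref{thm:tensor}, the main technical result of this paper, and its proof is geometric (via MV cycles and the Braverman--Gaitsgory description of tensor products in the affine Grassmannian). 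The paper says explicitly that no purely combinatorial proof is known. So what you are treating as an input is exactly the output.

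Your fallback in Step~2 does not rescue the argument. Splitting a Lusztig datum $(N\Ni{l})$ along a fixed reduced word $\bi$ into $N$ integer tuples does \emph{not} produce a Minkowski decomposition of $N\cdot P$: the GGMS datum $\mu_{w}$ at vertices $w$ off the $\bi$-path is determined from $(\Ni{l})$ by the tropical Pl\"ucker relations, which are piecewise linear rather than linear, so additivity of Lusztig data along $\bi$ does not give additivity of the full GGMS data. Hence your pieces $P_{k}$ will not in general satisfy $P_{1}+\cdots+P_{N}=N\cdot P$. Moreover, the claim that each $P_{k}$ has ``vertex set in $W_{\le x_{k}}\cdot\lambda$'' would force $P_{k}$ to be extremal, which you have not arranged. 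The paper's route is: Proposition~\ref{prop:N3} supplies the tensor factorization $K_{N}(P)=P_{x_{1}\cdot\lambda}\otimes\cdots\otimes P_{x_{N}\cdot\lambda}$ into extremal factors (via Kashiwara's similarity of crystal bases), and then Theorem~\ref{thm:tensor}, proved geometrically, converts each tensor step into a Minkowski-sum inclusion. You have correctly identified the first half but have assumed away the second.
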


\begin{irem}
We see from Remark~\ref{rem:MV-LS} and 
Theorem~\ref{thm:main} below that 
the elements $x_{1}$, $x_{2}$, $\dots$, $x_{N} \in 
W^{\lambda}_{\min} \subset W$ can be chosen 
in such a way that the vectors 
$x_1 \cdot \lambda$, $x_2 \cdot \lambda$, $\dots$, 
$x_N \cdot \lambda \in W \cdot \lambda \subset \Fh_{\BR}$
give the directions of the Lakshmibai-Seshadri path of shape $\lambda$ 
that corresponds to the MV polytope $P \in \mv(\lambda)$ 
under the (inexplicit) bijection via the crystal basis $\CB(\lambda)$.
Hence it also follows that 
$x_1 \ge x_2 \ge \dots \ge x_N$ 
in the Bruhat ordering on $W$.
\end{irem}

From the theorem above, we can deduce immediately that 
for an arbitrary $P \in \mv_{x}(\lambda)$, 
there holds $N \cdot P \subset N \cdot P_{x \cdot \lambda}$ and 
hence $P \subset P_{x \cdot \lambda}$. Indeed, this follows from 
the inclusion 
$P_{x_k \cdot \lambda}=\Conv(W_{\le x_k} \cdot \lambda) 
 \subset \Conv(W_{\le x} \cdot \lambda)=P_{x \cdot \lambda}$
for each $1 \le k \le N$, and the fact that the Minkowski sum 
$P_{x \cdot \lambda}+P_{x \cdot \lambda}+\cdots+P_{x \cdot \lambda}$ 
($N$ times) is identical to $N \cdot P_{x \cdot \lambda}$ 
(see Remark~\ref{rem:N1}). 

The main ingredient in our proof of the theorem is 
the following polytopal estimate of 
tensor products of MV polytopes.
Let $\lambda_{1},\,\lambda_{2} \in X_{\ast}(T) \subset \Fh_{\BR}$ 
be dominant coweights. 
Since $\mv(\lambda) \cong \CB(\lambda)$ as crystals 
for every dominant coweight $\lambda \in X_{\ast}(T)$, 
the tensor product 
$\mv(\lambda_{2}) \otimes \mv(\lambda_{1})$ of 
the crystals $\mv(\lambda_{1})$ and $\mv(\lambda_{2})$ 
decomposes into a disjoint union of 
connected components as follows:
\begin{equation*}
\mv(\lambda_{2}) \otimes \mv(\lambda_{1}) \cong 
 \bigoplus_{
   \begin{subarray}{c}
   \lambda \in X_{\ast}(T) \\[0.5mm]
   \text{$\lambda$ : dominant}
   \end{subarray}
   } 
\mv(\lambda)^{\oplus m_{\lambda_{1},\lambda_{2}}^{\lambda}},
\end{equation*}
where $m_{\lambda_{1},\lambda_{2}}^{\lambda} \in \BZ_{\ge 0}$ denotes 
the multiplicity of $\mv(\lambda)$ in 
$\mv(\lambda_{2}) \otimes \mv(\lambda_{1})$. 
For each dominant coweight 
$\lambda \in X_{\ast}(T)$ 
such that $m_{\lambda_{1},\lambda_{2}}^{\lambda} \ge 1$, 
we take (and fix) an arbitrary embedding 
$\iota_{\lambda}:\mv(\lambda) \hookrightarrow 
 \mv(\lambda_{2}) \otimes \mv(\lambda_{1})$ 
of crystals that maps $\mv(\lambda)$ 
onto a connected component of 
$\mv(\lambda_{2}) \otimes \mv(\lambda_{1})$, 
which is isomorphic to $\mv(\lambda)$ as a crystal. 
%
%%%%%%%%%%%%%
%%% ithm2 %%%
%%%%%%%%%%%%%
%
\begin{ithm}[$=$ Theorem~\ref{thm:tensor}] \label{ithm2}
Keep the notation above.
Let $P \in \mv(\lambda)$, and 
write $\iota_{\lambda}(P) \in 
\mv(\lambda_{2}) \otimes \mv(\lambda_{1})$ as\,{\rm:} 
$\iota_{\lambda}(P)=P_{2} \otimes P_{1}$ for some 
$P_{1} \in \mv(\lambda_{1})$ and 
$P_{2} \in \mv(\lambda_{2})$. 
We assume that the MV polytope 
$P_{2} \in \mv(\lambda_{2})$ is an extremal MV polytope 
$P_{x \cdot \lambda_{2}}$ for some $x \in W$. 
Then, we have
\begin{equation*}
P \subset P_{1} + P_{2},
\end{equation*}
where $P_{1} + P_{2}$ is the Minkowski sum of 
the MV polytopes $P_{1} \in \mv(\lambda_{1})$ and 
$P_{2} \in \mv(\lambda_{2})$.
\end{ithm}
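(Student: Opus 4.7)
The plan is to translate the inclusion $P \subset P_1 + P_2$ into a family of numerical inequalities between Berenstein--Zelevinsky (BZ) data of the polytopes, and then to exploit the rigidity of the extremal MV polytope $P_2$ to verify each of them. Recall that any pseudo-Weyl polytope $Q$ is uniquely encoded by its BZ datum $\{M^Q_\gamma\}$ indexed by chamber weights $\gamma$, where
\begin{equation*}
M^Q_\gamma \,:=\, \min\bigl\{\pair{\gamma}{v} : v \in Q\bigr\}
\end{equation*}
records the position of the supporting hyperplane of $Q$ with outward normal $-\gamma$. Two basic properties of this encoding are used: $Q \subseteq Q'$ holds if and only if $M^Q_\gamma \ge M^{Q'}_\gamma$ for every chamber weight $\gamma$, and BZ data are additive under Minkowski sum, $M^{P_1 + P_2}_\gamma = M^{P_1}_\gamma + M^{P_2}_\gamma$. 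Thus the theorem reduces to proving, for each $\gamma$, the single inequality $M^P_\gamma \ge M^{P_1}_\gamma + M^{P_2}_\gamma$.

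The extremality assumption $P_2 = P_{x \cdot \lambda_2} = \Conv(W_{\le x} \cdot \lambda_2)$ yields the clean formula
\begin{equation*}
M^{P_2}_\gamma \,=\, \min_{y \le x} \pair{\gamma}{y \cdot \lambda_2},
\end{equation*}
attained at some specific $y_\gamma \le x$. On the other hand, by Kamnitzer's characterization, $M^P_\gamma = \pair{\gamma}{\mu^P_{w_\gamma}}$ where $\mu^P_{w_\gamma}$ is the vertex of $P$ in the Weyl chamber $C_{w_\gamma}$ containing $\gamma$. The connection is furnished by the LBZ tensor-product rule applied to $\iota_\lambda(P) = P_2 \otimes P_1$: since $\iota_\lambda$ is a morphism of crystals, the actions of the Kashiwara operators $\tilde e_i,\,\tilde f_i$ on $P$ correspond combinatorially to those on the pair $(P_2,P_1)$. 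Iterating this correspondence along a path from a highest-weight element down to $P$, one should obtain an identity of the form $\mu^P_{w_\gamma} = \mu^{P_1}_{w_\gamma} + y_\gamma \cdot \lambda_2$ (possibly with slack in one direction), which, combined with the extremal formula above, yields the desired chamber-wise inequality.

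The principal obstacle I foresee lies in justifying this vertex correspondence. For a generic $P_2$, the LBZ tensor-product rule produces only ``tropical'' identities involving a minimum over several terms, and these typically yield weaker inequalities than needed. The extremality of $P_2$ is precisely the hypothesis that pins each of its BZ data to a value attained at a genuine vertex $y \cdot \lambda_2$ with $y \le x$, and this rigidity should tighten the tropical identities into the required form. I expect the technical core of the argument to involve a careful use of the tropical Pl\"ucker relations satisfied by every MV polytope (from Kamnitzer's theory), combined with the explicit chamber-weight description of extremal MV polytopes and the edge-length description of $\mv_{x}(\lambda)$ from \cite{NS-dp}, to bridge from the combinatorial crystal identities to the polytopal inclusion.
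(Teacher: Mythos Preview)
Your reduction is sound and in fact parallels the paper's setup: translating $P \subset P_1 + P_2$ into the family of inequalities $M^P_\gamma \ge M^{P_1}_\gamma + M^{P_2}_\gamma$ is equivalent, via equation~\eqref{eq:poly} and Proposition~\ref{prop:Minkowski}, to the paper's goal \eqref{eq:goal}, namely $\Phi_\lambda(P) \subset \ol{S^{w}_{\mu_w^{(1)} + \mu_w^{(2)}}}$ for every $w \in W$. So far so good.

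The gap is exactly where you locate it yourself: you propose to extract a vertex identity of the shape $\mu^P_{w} = \mu^{P_1}_{w} + y_\gamma \cdot \lambda_2$ by ``iterating the LBZ tensor-product rule along a path from the highest weight down to $P$.'' But Kashiwara's tensor rule only directly controls $\wt$, $\ve_j$, $\vp_j$---equivalently, the vertices $\mu_e$ and $\mu_{s_j}$ of the relevant polytopes. There is no known mechanism that propagates this information to an arbitrary vertex $\mu_w$ through a sequence of Kashiwara operators; the tropical Pl\"ucker relations do constrain the BZ data globally, but they do not by themselves convert the crystal-theoretic tensor rule into the pointwise vertex identity you need. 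The authors say as much: immediately after stating this theorem they write that they ``have not yet found a purely combinatorial proof,'' and the entire machinery of \S\ref{subsec:BG}--\S\ref{subsec:prf-tensor} exists precisely to circumvent this obstacle.

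The paper's route is genuinely different. It passes to MV cycles in the affine Grassmannian via $\Phi_\lambda$, invokes Braverman--Gaitsgory's geometric description of the tensor product (Theorem~\ref{thm:BG}) to realize $\Phi_\lambda(P)$ as the closure of an image $\bb^{(1)} \stra{e} \bb^{(2)}$ under the convolution map $\mul$, and then proves for each $w$ an inclusion $\bb^{(1)} \stra{e} \bb^{(2)} \subset \ol{\bb^{(1)} \stra{w} \bb^{(2),w}}$ by induction on a reduced expression of $w$. The inductive step relies on Lemma~\ref{lem:tran}, which in turn rests on Lemma~\ref{lem:ext1}; it is there that the extremality of $P_2$ enters, as the dichotomy $\mu_{ws_j}^{(2)} \in \{\mu_w^{(2)},\, ws_jw^{-1} \cdot \mu_w^{(2)}\}$ that makes the $\dot{w}$-translates $\bb^{(2),w}$ behave compatibly under the action of ${}^{w}L_j(\CO)$. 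This geometric rigidity is what replaces the combinatorial ``tightening'' you are hoping for, and at present no combinatorial substitute is known.
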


%\begin{irem}
%In this proposition, 
%it is not necessarily required that 
%the element $P_{1} \otimes P_{2}$ lies 
%in the ``top'' component of 
%$\mv(\lambda_{1}) \otimes \mv(\lambda_{2})$, 
%which is isomorphic, as a crystal, to 
%$\mv(\lambda_{1} + \lambda_{2})$. 
%\end{irem}

We have not yet found a purely combinatorial proof 
of the theorem above. In fact, our argument is a geometric one, 
which is based on results of Braverman-Gaitsgory in \cite{BrGa}, 
where tensor products of highest weight crystals are 
described in terms of MV cycles in the affine Grassmannian; 
here we should remark that the convention on the tensor product
rule for crystals in \cite{BrGa} is opposite to ours, i.e., to 
that of Kashiwara \cite{Kasoc}, \cite{Kasb}. 
Also, it seems likely that the theorem above still holds 
without the assumption of extremality on 
the MV polytope $P_{2} \in \mv(\lambda_{2})$.

This paper is organized as follows.
In \S\ref{sec:MVda}, we first recall 
the basic notation and terminology concerning MV polytopes and 
Demazure crystals, and also review the relation between 
MV polytopes and MV cycles in the affine Grassmannian. 
Furthermore, we obtain 
a few new results on extremal MV polytopes and MV cycles, 
which will be used in the proof of Theorem~\ref{ithm2} 
($=$ Theorem~\ref{thm:tensor}).
In \S\ref{sec:multi}, we introduce 
the notion of $N$-multiple maps 
from $\mv(\lambda)$ to $\mv(N \lambda)$ 
for a dominant coweight $\lambda \in X_{\ast}(T)$ and 
$N \in \BZ_{\geq 1}$, which is given explicitly 
by: $P \mapsto N \cdot P$ in terms of MV polytopes, 
and also show that for each MV polytope 
$P \in \mv(\lambda)$, there exists some $N \in \BZ_{\geq 1}$ 
such that $N \cdot P \in \mv(N \lambda)$ 
can be written as the tensor product of 
certain $N$ extremal MV polytopes in $\mv(\lambda)$.
Furthermore, assuming Theorem~\ref{ithm2}, 
we prove Theorem~\ref{ithm1} above, 
which provides an answer to 
the question mentioned above.
In \S\ref{sec:tensor}, after revisiting results of 
Braverman-Gaitsgory on tensor products of 
highest weight crystals
in order to adapt them to our situation, 
we prove Theorem~\ref{ithm2} by using 
the geometry of the affine Grassmannian.
In the Appendix, we give a brief account of 
why Theorem~\ref{thm:BG} below is 
a reformulation of results of Braverman-Gaitsgory. 

%=========================%
%     START SECTION 02    %
%=========================%
%
\section{Mirkovi\'c-Vilonen polytopes and Demazure crystals.}
\label{sec:MVda}

%==============================%
%     START SUBSECTION 0201    %
%==============================%
%
\subsection{Basic notation.}
\label{subsec:notation}

Let $G$ be a complex, connected, reductive algebraic group, 
$T$ a maximal torus, $B$ a Borel subgroup containing $T$, and 
$U$ the unipotent radical of $B$; 
we choose the convention that 
the roots in $B$ are the negative ones. 
Let $X_{*}(T)$ denote the (integral) coweight lattice 
$\Hom(\BC^{*},\,T)$ for $G$, and $X_{*}(T)_{+}$ 
the set of dominant (integral) coweights for $G$; 
we regard the coweight lattice 
$X_{*}(T)$ as an additive subgroup of 
a real form $\Fh_{\BR}:=\BR \otimes_{\BZ} X_{*}(T)$ 
of the Lie algebra $\Fh$ of the maximal torus $T$. 
We denote by $G^{\vee}$ 
the (complex) Langlands dual group of $G$. 

For the rest of this paper, 
except in \S\ref{subsec:geom}, \S\ref{subsec:BG}, and 
the Appendix, we assume that $G$ is semisimple. 
Denote by $\Fg$ the Lie algebra of $G$, 
which is a complex semisimple Lie algebra. 
Let 
\begin{equation*}
\Bigl(A=(a_{ij})_{i,j \in I}, \, 
 \Pi:=\bigl\{\alpha_{j}\bigr\}_{j \in I}, \, 
 \Pi^{\vee}:=\bigl\{h_{j}\bigr\}_{j \in I}, \, 
 \Fh^{\ast},\,\Fh
 \Bigr)
\end{equation*}
be the root datum of $\Fg$, where 
$A=(a_{ij})_{i,j \in I}$ is the Cartan matrix, 
$\Fh$ is the Cartan subalgebra, 
$\Pi:=\bigl\{\alpha_{j}\bigr\}_{j \in I} \subset 
 \Fh^{\ast}:=\Hom_{\BC}(\Fh,\,\BC)$ 
is the set of simple roots, and 
$\Pi^{\vee}:=\bigl\{h_{j}\bigr\}_{j \in I} \subset \Fh$ 
is the set of simple coroots; note that 
$\pair{h_{i}}{\alpha_{j}}=a_{ij}$ for $i,\,j \in I$, 
where $\pair{\cdot}{\cdot}$ denotes the canonical pairing 
between $\Fh$ and $\Fh^{\ast}$.
We set $\Fh_{\BR}:= \sum_{j \in I} \BR h_{j} \subset \Fh$, 
which is a real form of $\Fh$; 
we regard the coweight lattice 
$X_{*}(T)=\Hom(\BC^{\ast},\,T)$ 
as an additive subgroup of $\Fh_{\BR}$. 
Also, for $h,\,h' \in \Fh_{\BR}$, we write 
$h' \ge h$ if $h'-h \in Q^{\vee}_{+}:=\sum_{j \in I}\BZ_{\ge 0}h_{j}$. 
Let $W:=\langle s_{j} \mid j \in I \rangle$ 
be the Weyl group of $\Fg$, where $s_{j}$, $j \in I$, are 
the simple reflections, with length function 
$\ell:W \rightarrow \BZ_{\ge 0}$, 
the identity element $e \in W$, and 
the longest element $w_{0} \in W$; 
we denote by $\le$ the (strong) Bruhat ordering on $W$. 
Let $\Fg^{\vee}$ be the Lie algebra of 
the Langlands dual group $G^{\vee}$ of $G$, 
which is the complex semisimple Lie algebra
associated to the root datum 
\begin{equation*}
\Bigl({}^{t}A=(a_{ji})_{i,j \in I}, \, 
 \Pi^{\vee}=\bigl\{h_{j}\bigr\}_{j \in I}, \, 
 \Pi=\bigl\{\alpha_{j}\bigr\}_{j \in I}, \, 
 \Fh,\,\Fh^{\ast}
 \Bigr);
\end{equation*}
note that the Cartan subalgebra of 
$\Fg^{\vee}$ is $\Fh^{\ast}$, not $\Fh$. 
Let $U_{q}(\Fg^{\vee})$ be 
the quantized universal enveloping algebra of 
$\Fg^{\vee}$ over $\BC(q)$, and $U_{q}^{+}(\Fg^{\vee})$ 
its positive part. 
For a dominant coweight 
$\lambda \in X_{\ast}(T) \subset \Fh_{\BR}$, 
denote by $V(\lambda)$ the integrable highest 
weight $U_{q}(\Fg^{\vee})$-module 
of highest weight $\lambda$, and by 
$\CB(\lambda)$ the crystal basis of $V(\lambda)$. 

Now, let $\lambda \in X_{\ast}(T) \subset \Fh_{\BR}$ 
be a dominant coweight, and $x \in W$. 
The Demazure module $V_{x}(\lambda)$ is defined to be 
the $U_{q}^{+}(\Fg^{\vee})$-submodule of $V(\lambda)$ 
generated by the one-dimensional weight space 
$V(\lambda)_{x \cdot \lambda}  \subset V(\lambda)$ of 
weight $x \cdot \lambda \in X_{\ast}(T) \subset \Fh_{\BR}$. 
Recall from \cite{Kas} that 
the Demazure crystal $\CB_{x}(\lambda)$ is 
a subset of $\CB(\lambda)$ such that 
%
%%%%%%%%%%%%%%%%%%
%%% eq:dem-mod %%%
%%%%%%%%%%%%%%%%%%
%
\begin{equation} \label{eq:dem-mod}
V(\lambda) \supset 
V_{x}(\lambda) = 
 \bigoplus_{b \in \CB_{x}(\lambda)}
 \BC(q) G_{\lambda}(b),
\end{equation}
where $G_{\lambda}(b)$, $b \in \CB(\lambda)$, 
form the lower global basis of $V(\lambda)$.
%
%%%%%%%%%%%%%%%%%%
%%% rem:demcos %%%
%%%%%%%%%%%%%%%%%%
%
\begin{rem} \label{rem:demcos}
If $x,\,y \in W$ satisfies 
$x \cdot \lambda=y \cdot \lambda$, then 
we have $V_{x}(\lambda)=V_{y}(\lambda)$ since 
$V(\lambda)_{x \cdot \lambda}=
 V(\lambda)_{y \cdot \lambda}$. 
Therefore, it follows from \eqref{eq:dem-mod} that 
$\CB_{x}(\lambda)=\CB_{y}(\lambda)$. 
\end{rem}

We know from \cite[Proposition~3.2.3]{Kas} that 
the Demazure crystals $\CB_{x}(\lambda)$, 
$x \in W$, are characterized by the inductive relations: 
%
%%%%%%%%%%%%%%%%%
%%% eq:demint %%%
%%%%%%%%%%%%%%%%%
%
\begin{align}
& \CB_{e}(\lambda)=
   \bigl\{u_{\lambda}\bigr\}, \label{eq:demint1} \\[1.5mm]
& \CB_{x}(\lambda)=
    \bigcup_{k \in \BZ_{\ge 0}} f_{j}^{k} 
     \CB_{s_{j}x}(\lambda) 
     \setminus \{0\}
  \quad \text{for $x \in W$ and $j \in I$ with $s_{j}x < x$},
  \label{eq:demint2}
\end{align}
where $u_{\lambda} \in \CB(\lambda)$ denotes 
the highest weight element of $\CB(\lambda)$, and 
$f_{j}$, $j \in I$, denote the lowering Kashiwara operators
for $\CB(\lambda)$. 

%==============================%
%     START SUBSECTION 0202    %
%==============================%
%
\subsection{Mirkovi\'c-Vilonen polytopes.}
\label{subsec:MV}

In this subsection, following \cite{Kam1}, we recall 
a (combinatorial) characterization of 
Mirkovi\'c-Vilonen (MV for short) polytopes; 
the relation between this characterization and 
the original (geometric) definition of MV polytopes 
given by Anderson \cite{A} will be explained 
in \S\ref{subsec:geom}. 

As in (the second paragraph of) \S\ref{subsec:notation}, 
we assume that $\Fg$ is a complex semisimple Lie algebra. 
Let $\mu_{\bullet}=(\mu_{w})_{w \in W}$ be 
a collection of elements of 
$\Fh_{\BR}=\sum_{j \in I} \BR h_{j}$.
We call $\mu_{\bullet}=(\mu_{w})_{w \in W}$ a 
Gelfand-Goresky-MacPherson-Serganova 
(GGMS) datum if it satisfies 
the condition that 
$w^{-1} \cdot \mu_{w'} - w^{-1} \cdot \mu_{w} 
 \in Q^{\vee}_{+}$ for all $w,\,w' \in W$. 
It follows by induction 
with respect to the (weak) Bruhat ordering on $W$ 
that $\mu_{\bullet}=(\mu_{w})_{w \in W}$ is a GGMS datum 
if and only if 
%
%%%%%%%%%%%%%%%%%
%%% eq:length %%%
%%%%%%%%%%%%%%%%%
%
\begin{equation} \label{eq:length}
\mu_{ws_{i}}-\mu_{w} \in \BZ_{\ge 0}\,(w \cdot h_{i}) 
\quad 
\text{for every $w \in W$ and $i \in I$}. 
\end{equation}
%
%%%%%%%%%%%%%%%%%%%%
%%% rem:GGMS-sum %%%
%%%%%%%%%%%%%%%%%%%%
%
\begin{rem} \label{rem:GGMS-sum}
Let $\mu_{\bullet}^{(1)}=(\mu_{w}^{(1)})_{w \in W}$ and 
$\mu_{\bullet}^{(2)}=(\mu_{w}^{(2)})_{w \in W}$ be GGMS data. 
Then, it is obvious from the definition of GGMS data 
(or equivalently, from \eqref{eq:length}) that 
the (componentwise) sum 
\begin{equation*}
\mu_{\bullet}^{(1)}+\mu_{\bullet}^{(2)}:=
 (\mu_{w}^{(1)}+\mu_{w}^{(2)})_{w \in W}
\end{equation*}
of $\mu_{\bullet}^{(1)}$ and $\mu_{\bullet}^{(2)}$ is 
also a GGMS datum. 
\end{rem}

Following \cite{Kam1} and \cite{Kam2}, 
to each GGMS datum $\mu_{\bullet}=(\mu_{w})_{w \in W}$, 
we associate a convex polytope 
$P(\mu_{\bullet}) \subset \Fh_{\BR}$ by:
%
%%%%%%%%%%%%%%%
%%% eq:poly %%%
%%%%%%%%%%%%%%%
%
\begin{equation} \label{eq:poly}
P(\mu_{\bullet})=
\bigcap_{w \in W}
 \bigl\{ 
 v \in \Fh_{\BR} \mid 
 w^{-1} \cdot v- w^{-1} \cdot \mu_{w} \in 
 \textstyle{\sum_{j \in I}\BR_{\ge 0} h_{j}}
 \bigr\}; 
\end{equation}
the polytope $P(\mu_{\bullet})$ is called 
a pseudo-Weyl polytope with GGMS datum $\mu_{\bullet}$. 
Note that the GGMS datum $\mu_{\bullet}=(\mu_{w})_{w \in W}$ is 
determined uniquely by the convex polytope $P(\mu_{\bullet})$. 
Also, we know from \cite[Proposition~2.2]{Kam1} that 
the set of vertices of the polytope $P(\mu_{\bullet})$ 
is given by the collection $\mu_{\bullet}=(\mu_{w})_{w \in W}$ 
(possibly, with repetitions). In particular, we have
%
%%%%%%%%%%%%%%%
%%% eq:conv %%%
%%%%%%%%%%%%%%%
%
\begin{equation} \label{eq:conv}
P(\mu_{\bullet}) = 
\Conv\,\bigl\{\mu_{w} \mid w \in W\bigr\},
\end{equation}
where for a subset $X$ of $\Fh_{\BR}$, 
$\Conv X$ denotes the convex hull in $\Fh_{\BR}$ of $X$. 

We know the following proposition from \cite[Lemma~6.1]{Kam1}, 
which will be used later. 
%
%%%%%%%%%%%%%%%%%%%%%%
%%% prop:Minkowski %%%
%%%%%%%%%%%%%%%%%%%%%%
%
\begin{prop} \label{prop:Minkowski}
Let $P_{1}=P(\mu_{\bullet}^{(1)})$ and 
$P_{2}=P(\mu_{\bullet}^{(2)})$ be pseudo-Weyl polytopes 
with GGMS data 
$\mu_{\bullet}^{(1)}=(\mu_{w}^{(1)})_{w \in W}$ and 
$\mu_{\bullet}^{(2)}=(\mu_{w}^{(2)})_{w \in W}$, respectively. 
Then, the Minkowski sum
\begin{equation*}
P_{1}+P_{2}:=
 \bigl\{v_{1}+v_{2} \mid v_{1} \in P_{1},\,v_{2} \in P_{2}\bigr\}
\end{equation*}
of the pseudo-Weyl polytopes $P_{1}$ and $P_{2}$ is identical to 
the pseudo-Weyl polytope $P(\mu_{\bullet}^{(1)}+\mu_{\bullet}^{(2)})$ 
with GGMS datum $\mu_{\bullet}^{(1)}+\mu_{\bullet}^{(2)}=
 (\mu_{w}^{(1)}+\mu_{w}^{(2)})_{w \in W}$ 
(see Remark~\ref{rem:GGMS-sum}). 
\end{prop}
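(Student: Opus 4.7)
The plan is to split the equality $P_1+P_2=P(\mu_\bullet^{(1)}+\mu_\bullet^{(2)})$ into two containments, each of which flows directly from one of the two characterizations of pseudo-Weyl polytopes recalled in the previous paragraphs: the intersection-of-halfspaces description \eqref{eq:poly} for one direction, and the convex-hull description \eqref{eq:conv} for the other. Throughout, I rely on Remark~\ref{rem:GGMS-sum}, which guarantees that $\mu_\bullet^{(1)}+\mu_\bullet^{(2)}$ is itself a GGMS datum, so that the right-hand side $P(\mu_\bullet^{(1)}+\mu_\bullet^{(2)})$ is indeed a well-defined pseudo-Weyl polytope to which both \eqref{eq:poly} and \eqref{eq:conv} apply.

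For the inclusion $P_1+P_2\subseteq P(\mu_\bullet^{(1)}+\mu_\bullet^{(2)})$, I would take arbitrary $v_1\in P_1$ and $v_2\in P_2$ and check, for every $w\in W$, the defining condition from \eqref{eq:poly}. Writing
\[
w^{-1}\cdot(v_1+v_2)-w^{-1}\cdot(\mu_w^{(1)}+\mu_w^{(2)})
=\bigl(w^{-1}\cdot v_1-w^{-1}\cdot\mu_w^{(1)}\bigr)+\bigl(w^{-1}\cdot v_2-w^{-1}\cdot\mu_w^{(2)}\bigr),
\]
each summand lies in $\sum_{j\in I}\BR_{\geq 0}h_j$ by the assumption $v_i\in P_i$, and the cone $\sum_{j\in I}\BR_{\geq 0}h_j$ is closed under addition, yielding $v_1+v_2\in P(\mu_\bullet^{(1)}+\mu_\bullet^{(2)})$.

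For the reverse inclusion $P(\mu_\bullet^{(1)}+\mu_\bullet^{(2)})\subseteq P_1+P_2$, I would invoke \eqref{eq:conv} applied to the GGMS datum $\mu_\bullet^{(1)}+\mu_\bullet^{(2)}$ to obtain
\[
P(\mu_\bullet^{(1)}+\mu_\bullet^{(2)})=\Conv\,\bigl\{\mu_w^{(1)}+\mu_w^{(2)}\mid w\in W\bigr\}.
\]
Each generator $\mu_w^{(1)}+\mu_w^{(2)}$ lies in $P_1+P_2$ because $\mu_w^{(i)}\in P_i$ (by \eqref{eq:conv} applied to $\mu_\bullet^{(i)}$), and $P_1+P_2$, being the Minkowski sum of two convex sets, is itself convex. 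Hence the convex hull on the right is contained in $P_1+P_2$, completing the proof.

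The whole argument is essentially a matter of bookkeeping between the two complementary descriptions of pseudo-Weyl polytopes, so there is no serious obstacle. The only point that could mislead is the temptation to compare vertex sets directly via support functions (which would require an extra lemma saying that for $\xi$ in the interior of the Weyl chamber $w\cdot C^-$ the maximum of $\langle\cdot,\xi\rangle$ on $P(\mu_\bullet)$ is attained at $\mu_w$); the approach above bypasses this by exploiting the fact that \eqref{eq:conv} already labels a spanning set of $P(\mu_\bullet^{(1)}+\mu_\bullet^{(2)})$ by the same index set $W$ as for $P_1$ and $P_2$, so the vertex $\mu_w^{(1)}+\mu_w^{(2)}$ is matched to the sum of the vertices $\mu_w^{(1)}$ and $\mu_w^{(2)}$ indexed by the \emph{same} element $w$, rather than to an arbitrary pair $(w,w')$.
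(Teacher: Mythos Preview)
Your argument is correct. The paper does not actually supply a proof of this proposition; it simply records it as \cite[Lemma~6.1]{Kam1} and moves on. Your two-inclusion argument, using the halfspace description \eqref{eq:poly} for $P_{1}+P_{2}\subseteq P(\mu_{\bullet}^{(1)}+\mu_{\bullet}^{(2)})$ and the convex-hull description \eqref{eq:conv} for the reverse inclusion, is exactly the kind of elementary verification one would give, and each step is sound: the dominant cone is closed under addition, and the Minkowski sum of convex sets is convex. Your closing remark about avoiding a support-function argument is apt but unnecessary here, since \eqref{eq:conv} is already available in the paper as a black box.
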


Let $R(w_{0})$ denote the set of all reduced words for $w_{0}$, that is, 
all sequences $(i_{1},\,i_{2},\,\dots,\,i_{m})$ of elements of $I$ 
such that $s_{i_{1}}s_{i_{2}} \cdots s_{i_{m}}=w_{0}$, where 
$m$ is the length $\ell(w_{0})$ of the longest element $w_{0}$.
Let $\bi=(i_{1},\,i_{2},\,\dots,\,i_{m}) \in R(w_{0})$ 
be a reduced word for $w_{0}$. 
We set $\wi{l}:=\si{1}\si{2} \cdots \si{l} \in W$ 
for $0 \le l \le m$. For a GGMS datum 
$\mu_{\bullet}=(\mu_{w})_{w \in W}$, define integers 
(called lengths of edges) 
$\Ni{l}=\Ni{l}(\mu_{\bullet}) \in \BZ_{\ge 0}$, 
$1 \le l \le m$, via the following 
``length formula'' (see \cite[Eq.(8)]{Kam1} and 
\eqref{eq:length} above): 
%
%%%%%%%%%%%%
%%% eq:n %%%
%%%%%%%%%%%%
%
\begin{equation} \label{eq:n}
\mu_{\wi{l}}-\mu_{\wi{l-1}}=\Ni{l} \wi{l-1} \cdot h_{i_{l}}.
\end{equation}

\vsp

{\small 
\hspace{55mm}
%WinTpicVersion3.08
\unitlength 0.1in
\begin{picture}( 15.0000,  3.3500)( 15.5000, -7.0000)
% CIRCLE 2 0 0 0
% 4 1600 600 1600 550 1600 550 1600 550
% 
\special{pn 8}%
\special{sh 0.600}%
\special{ar 1600 600 50 50  0.0000000 6.2831853}%
% STR 2 0 3 0
% 3 3000 600 3000 700 1 0
% $\mu_{\wi{l}}=\mu_{\wi{l-1}\si{l}}$
\put(30.0000,-7.0000){\makebox(0,0)[lt]{$\mu_{\wi{l}}=\mu_{\wi{l-1}\si{l}}$}}%
% STR 2 0 3 0
% 3 2300 350 2300 450 5 0
% $\Ni{l}$
\put(23.0000,-4.5000){\makebox(0,0){$\Ni{l}$}}%
% CIRCLE 2 0 0 0
% 4 3000 600 3000 550 3000 550 3000 550
% 
\special{pn 8}%
\special{sh 0.600}%
\special{ar 3000 600 50 50  0.0000000 6.2831853}%
% LINE 2 0 3 0
% 2 1600 600 3000 600
% 
\special{pn 8}%
\special{pa 1600 600}%
\special{pa 3000 600}%
\special{fp}%
% STR 2 0 3 0
% 3 1600 600 1600 700 1 0
% $\mu_{\wi{l-1}}$
\put(16.0000,-7.0000){\makebox(0,0)[lt]{$\mu_{\wi{l-1}}$}}%
\end{picture}%

}

\vsp

Now we recall a (combinatorial) characterization of 
Mirkovi\'c-Vilonen (MV) polytopes, due to Kamnitzer \cite{Kam1}. 
This result holds for an arbitrary complex semisimple Lie algebra $\Fg$, 
but we give its precise statement only in the case that 
$\Fg$ is simply-laced since we do not make use of it 
in this paper; we merely mention that 
when $\Fg$ is not simply-laced, 
there are also conditions on the lengths 
$\Ni{l}$, $1 \le l \le m$, $\bi \in R(w_{0})$, 
for the other possible values of $a_{ij}$ and $a_{ji}$
(we refer the reader to \cite[\S3]{BeZe} for explicit formulas).
%
%%%%%%%%%%%%%%
%%% dfn:MV %%%
%%%%%%%%%%%%%%
%
\begin{dfn} \label{dfn:MV}
A GGMS datum $\mu_{\bullet}=(\mu_{w})_{w \in W}$ is 
said to be a Mirkovi\'c-Vilonen (MV) datum if 
it satisfies the following conditions: 

(1) If $\bi=(i_{1},\,i_{2},\,\dots,\,i_{m}) \in R(w_{0})$ and 
$\bj=(j_{1},\,j_{2},\,\dots,\,j_{m}) \in R(w_{0})$ are related by 
a $2$-move, that is, if there exist indices $i,\,j \in I$ 
with $a_{ij}=a_{ji}=0$ and an integer $0 \le k \le m-2$ 
such that $i_{l}=j_{l}$ for all $1 \le l \le m$ with 
$l \ne k+1,\,k+2$, and such that 
$i_{k+1}=j_{k+2}=i$, $i_{k+2}=j_{k+1}=j$, then 
there hold
\begin{equation*}
\begin{cases}
\Ni{l}=\Nj{l} \quad 
  \text{for all $1 \le l \le m$ with $l \ne k+1,\,k+2$, and} \\[1.5mm]
\Ni{k+1}=\Nj{k+2}, \quad \Ni{k+2}=\Nj{k+1}.
\end{cases}
\end{equation*}

\vspace{5mm}

\newcommand{\vertexa}{
  $\mu_{\wi{k+2}}=\mu_{\wi{k}s_is_j}=
   \mu_{\wj{k}s_js_i}=\mu_{\wj{k+2}}$
}

{\small 
\hspace*{-20mm}
%WinTpicVersion3.08
\unitlength 0.1in
\begin{picture}( 44.6000, 24.0000)(  4.4000,-28.0000)
% LINE 2 0 3 0
% 2 4200 800 4200 400
% 
\special{pn 8}%
\special{pa 4200 800}%
\special{pa 4200 400}%
\special{fp}%
% LINE 2 0 3 0
% 2 4200 2400 4200 2800
% 
\special{pn 8}%
\special{pa 4200 2400}%
\special{pa 4200 2800}%
\special{fp}%
% CIRCLE 2 0 0 0
% 4 4200 800 4200 750 4200 750 4200 750
% 
\special{pn 8}%
\special{sh 0.600}%
\special{ar 4200 800 50 50  0.0000000 6.2831853}%
% CIRCLE 2 0 0 0
% 4 4200 2400 4200 2350 4200 2350 4200 2350
% 
\special{pn 8}%
\special{sh 0.600}%
\special{ar 4200 2400 50 50  0.0000000 6.2831853}%
% STR 2 0 3 0
% 3 3500 1500 3500 1600 3 0
% $\mu_{\wi{k+1}}=\mu_{\wi{k}s_{i}}$
\put(35.0000,-16.0000){\makebox(0,0)[rb]{$\mu_{\wi{k+1}}=\mu_{\wi{k}s_{i}}$}}%
% STR 2 0 3 0
% 3 4300 2350 4300 2450 1 0
% $\mu_{\wi{k}}=\mu_{\wj{k}}$
\put(43.0000,-24.5000){\makebox(0,0)[lt]{$\mu_{\wi{k}}=\mu_{\wj{k}}$}}%
% LINE 2 0 3 0
% 8 4200 2400 4800 1600 4800 1600 4200 800 4200 800 3600 1600 3600 1600 4200 2400
% 
\special{pn 8}%
\special{pa 4200 2400}%
\special{pa 4800 1600}%
\special{fp}%
\special{pa 4800 1600}%
\special{pa 4200 800}%
\special{fp}%
\special{pa 4200 800}%
\special{pa 3600 1600}%
\special{fp}%
\special{pa 3600 1600}%
\special{pa 4200 2400}%
\special{fp}%
% CIRCLE 2 0 0 0
% 4 4800 1600 4800 1550 4800 1550 4800 1550
% 
\special{pn 8}%
\special{sh 0.600}%
\special{ar 4800 1600 50 50  0.0000000 6.2831853}%
% CIRCLE 2 0 0 0
% 4 3600 1600 3600 1550 3600 1550 3600 1550
% 
\special{pn 8}%
\special{sh 0.600}%
\special{ar 3600 1600 50 50  0.0000000 6.2831853}%
% STR 2 0 3 0
% 3 4900 1500 4900 1600 2 0
% $\mu_{\wj{k+1}}=\mu_{\wj{k}s_{j}}$
\put(49.0000,-16.0000){\makebox(0,0)[lb]{$\mu_{\wj{k+1}}=\mu_{\wj{k}s_{j}}$}}%
% STR 2 0 3 0
% 3 3800 1900 3800 2000 4 0
% $\Ni{k+1}$
\put(38.0000,-20.0000){\makebox(0,0)[rt]{$\Ni{k+1}$}}%
% STR 2 0 3 0
% 3 3800 1100 3800 1200 3 0
% $\Ni{k+2}$
\put(38.0000,-12.0000){\makebox(0,0)[rb]{$\Ni{k+2}$}}%
% STR 2 0 3 0
% 3 4600 1900 4600 2000 1 0
% $\Nj{k+1}$
\put(46.0000,-20.0000){\makebox(0,0)[lt]{$\Nj{k+1}$}}%
% STR 2 0 3 0
% 3 4600 1100 4600 1200 2 0
% $\Nj{k+2}$
\put(46.0000,-12.0000){\makebox(0,0)[lb]{$\Nj{k+2}$}}%
% STR 2 0 3 0
% 3 4300 700 4300 800 2 0
% \vertexa
\put(43.0000,-8.0000){\makebox(0,0)[lb]{\vertexa}}%
\end{picture}%

}

\vsp

(2) If $\bi=(i_{1},\,i_{2},\,\dots,\,i_{m}) \in R(w_{0})$ and 
$\bj=(j_{1},\,j_{2},\,\dots,\,j_{m}) \in R(w_{0})$ are 
related by a $3$-move, that is, 
if there exist indices $i,\,j \in I$ 
with $a_{ij}=a_{ji}=-1$ and an integer $0 \le k \le m-3$ 
such that $i_{l}=j_{l}$ for all $1 \le l \le m$ with 
$l \ne k+1,\,k+2,\,k+3$, and such that
$i_{k+1}=i_{k+3}=j_{k+2}=i$, 
$i_{k+2}=j_{k+1}=j_{k+3}=j$, then 
there hold 
\begin{equation*}
\begin{cases}
\Ni{l}=\Nj{l} \quad
  \text{for all $1 \le l \le m$ with $l \ne k+1,\,k+2,\,k+3$, and} \\[1.5mm]
\Nj{k+1}=\Ni{k+2}+\Ni{k+3}-\min \bigl(\Ni{k+1},\, \Ni{k+3}\bigr), \\[1.5mm]
\Nj{k+2}=\min \bigl(\Ni{k+1},\, \Ni{k+3}\bigr), \\[1.5mm]
\Nj{k+3}=\Ni{k+1}+\Ni{k+2}-\min \bigl(\Ni{k+1},\, \Ni{k+3}\bigr).
\end{cases}
\end{equation*}

\vspace{5mm}

\newcommand{\vertexb}{
  $\mu_{\wi{k+3}}=\mu_{\wi{k}s_is_js_i}=
   \mu_{\wj{k}s_js_is_j}=\mu_{\wj{k+3}}$
}

{\small 
\hspace*{-22.5mm}
%WinTpicVersion3.08
\unitlength 0.1in
\begin{picture}( 45.5000, 24.2000)(  1.5000,-28.0000)
% LINE 2 0 3 0
% 2 4000 805 3400 1205
% 
\special{pn 8}%
\special{pa 4000 806}%
\special{pa 3400 1206}%
\special{fp}%
% LINE 2 0 3 0
% 2 3400 1205 3400 2005
% 
\special{pn 8}%
\special{pa 3400 1206}%
\special{pa 3400 2006}%
\special{fp}%
% LINE 2 0 3 0
% 2 3400 2005 4000 2405
% 
\special{pn 8}%
\special{pa 3400 2006}%
\special{pa 4000 2406}%
\special{fp}%
% LINE 2 0 3 0
% 2 4000 2405 4600 2005
% 
\special{pn 8}%
\special{pa 4000 2406}%
\special{pa 4600 2006}%
\special{fp}%
% LINE 2 0 3 0
% 2 4600 2005 4600 1205
% 
\special{pn 8}%
\special{pa 4600 2006}%
\special{pa 4600 1206}%
\special{fp}%
% LINE 2 0 3 0
% 2 4600 1205 4000 805
% 
\special{pn 8}%
\special{pa 4600 1206}%
\special{pa 4000 806}%
\special{fp}%
% LINE 2 0 3 0
% 2 4000 800 4000 400
% 
\special{pn 8}%
\special{pa 4000 800}%
\special{pa 4000 400}%
\special{fp}%
% LINE 2 0 3 0
% 2 4000 2400 4000 2800
% 
\special{pn 8}%
\special{pa 4000 2400}%
\special{pa 4000 2800}%
\special{fp}%
% CIRCLE 2 0 0 0
% 4 4000 800 4000 750 4000 750 4000 750
% 
\special{pn 8}%
\special{sh 0.600}%
\special{ar 4000 800 50 50  0.0000000 6.2831853}%
% CIRCLE 2 0 0 0
% 4 4600 1200 4600 1150 4600 1150 4600 1150
% 
\special{pn 8}%
\special{sh 0.600}%
\special{ar 4600 1200 50 50  0.0000000 6.2831853}%
% CIRCLE 2 0 0 0
% 4 4600 2000 4600 1950 4600 1950 4600 1950
% 
\special{pn 8}%
\special{sh 0.600}%
\special{ar 4600 2000 50 50  0.0000000 6.2831853}%
% CIRCLE 2 0 0 0
% 4 4000 2400 4000 2350 4000 2350 4000 2350
% 
\special{pn 8}%
\special{sh 0.600}%
\special{ar 4000 2400 50 50  0.0000000 6.2831853}%
% CIRCLE 2 0 0 0
% 4 3400 2000 3400 1950 3400 1950 3400 1950
% 
\special{pn 8}%
\special{sh 0.600}%
\special{ar 3400 2000 50 50  0.0000000 6.2831853}%
% CIRCLE 2 0 0 0
% 4 3400 1200 3400 1150 3400 1150 3400 1150
% 
\special{pn 8}%
\special{sh 0.600}%
\special{ar 3400 1200 50 50  0.0000000 6.2831853}%
% STR 2 0 3 0
% 3 4150 450 4150 550 2 0
% \vertexb
\put(41.5000,-5.5000){\makebox(0,0)[lb]{\vertexb}}%
% STR 2 0 3 0
% 3 3700 850 3700 950 3 0
% $\Ni{k+3}$
\put(37.0000,-9.5000){\makebox(0,0)[rb]{$\Ni{k+3}$}}%
% STR 2 0 3 0
% 3 4400 900 4400 1000 2 0
% $\Nj{k+3}$
\put(44.0000,-10.0000){\makebox(0,0)[lb]{$\Nj{k+3}$}}%
% STR 2 0 3 0
% 3 4850 1500 4850 1600 5 0
% $\Nj{k+2}$
\put(48.5000,-16.0000){\makebox(0,0){$\Nj{k+2}$}}%
% STR 2 0 3 0
% 3 3150 1500 3150 1600 5 0
% $\Ni{k+2}$
\put(31.5000,-16.0000){\makebox(0,0){$\Ni{k+2}$}}%
% STR 2 0 3 0
% 3 3700 2150 3700 2250 4 0
% $\Ni{k+1}$
\put(37.0000,-22.5000){\makebox(0,0)[rt]{$\Ni{k+1}$}}%
% STR 2 0 3 0
% 3 4300 2150 4300 2250 1 0
% $\Nj{k+1}$
\put(43.0000,-22.5000){\makebox(0,0)[lt]{$\Nj{k+1}$}}%
% STR 2 0 3 0
% 3 4700 1250 4700 1350 2 0
% $\mu_{\wj{k+2}}=\mu_{\wj{k}s_js_i}$
\put(47.0000,-13.5000){\makebox(0,0)[lb]{$\mu_{\wj{k+2}}=\mu_{\wj{k}s_js_i}$}}%
% STR 2 0 3 0
% 3 4700 2050 4700 2150 2 0
% $\mu_{\wj{k+1}}=\mu_{\wj{k}s_j}$
\put(47.0000,-21.5000){\makebox(0,0)[lb]{$\mu_{\wj{k+1}}=\mu_{\wj{k}s_j}$}}%
% STR 2 0 3 0
% 3 4300 2550 4300 2650 1 0
% $\mu_{\wi{k}}=\mu_{\wj{k}}$
\put(43.0000,-26.5000){\makebox(0,0)[lt]{$\mu_{\wi{k}}=\mu_{\wj{k}}$}}%
% STR 2 0 3 0
% 3 3300 2050 3300 2150 3 0
% $\mu_{\wi{k+1}}=\mu_{\wi{k}s_i}$
\put(33.0000,-21.5000){\makebox(0,0)[rb]{$\mu_{\wi{k+1}}=\mu_{\wi{k}s_i}$}}%
% STR 2 0 3 0
% 3 3300 1250 3300 1350 3 0
% $\mu_{\wi{k+2}}=\mu_{\wi{k}s_is_j}$
\put(33.0000,-13.5000){\makebox(0,0)[rb]{$\mu_{\wi{k+2}}=\mu_{\wi{k}s_is_j}$}}%
% VECTOR 2 0 3 0
% 2 4400 650 4100 750
% 
\special{pn 8}%
\special{pa 4400 650}%
\special{pa 4100 750}%
\special{fp}%
\special{sh 1}%
\special{pa 4100 750}%
\special{pa 4170 748}%
\special{pa 4152 734}%
\special{pa 4158 710}%
\special{pa 4100 750}%
\special{fp}%
% VECTOR 2 0 3 0
% 2 4400 2600 4100 2450
% 
\special{pn 8}%
\special{pa 4400 2600}%
\special{pa 4100 2450}%
\special{fp}%
\special{sh 1}%
\special{pa 4100 2450}%
\special{pa 4152 2498}%
\special{pa 4148 2474}%
\special{pa 4170 2462}%
\special{pa 4100 2450}%
\special{fp}%
\end{picture}%

}
\end{dfn}
%
%%%%%%%%%%%%%%%%%
%%% rem:braid %%%
%%%%%%%%%%%%%%%%%
%
%\begin{rem} \label{rem:braid}
%Let $\bi,\,\bj \in R(w_{0})$.
%It is well-known (see, for example, 
%\cite[Theorem~3.3.1\,(ii)]{BB}) that 
%in the simply-laced case, there exists a sequence 
%$\bi=\bi_{0},\,\bi_{1},\,\dots,\,\bi_{t}=\bj$ 
%of elements of $R(w_{0})$ such that 
%$\bi_{u}$ and $\bi_{u+1}$ are related 
%by a $2$-move or a $3$-move 
%for each $0 \le u \le t-1$. 
%\end{rem}

The pseudo-Weyl polytope $P(\mu_{\bullet})$ 
with GGMS datum $\mu_{\bullet}=(\mu_{w})_{w \in W}$ 
(see \eqref{eq:poly}) is 
a Mirkovi\'c-Vilonen (MV) polytope if and only if 
the GGMS datum $\mu_{\bullet}=(\mu_{w})_{w \in W}$ is 
an MV datum (see the proof of \cite[Proposition~5.4]{Kam1} and 
the comment following \cite[Theorem~7.1]{Kam1}).
Also, for a dominant coweight 
$\lambda \in X_{*}(T) \subset \Fh_{\BR}$ and a coweight 
$\nu \in X_{*}(T) \subset \Fh_{\BR}$, 
an MV polytope $P=P(\mu_{\bullet})$ with GGMS datum 
$\mu_{\bullet}=(\mu_{w})_{w \in W}$ is 
an MV polytope of highest vertex $\lambda$ and lowest vertex $\nu$ 
if and only if $\mu_{w_{0}}=\lambda$, $\mu_{e}=\nu$, and 
$P$ is contained in the convex hull $\Conv(W \cdot \lambda)$ of 
the $W$-orbit $W \cdot \lambda \subset \Fh_{\BR}$ 
(see \cite[Proposition~7]{A}); 
we denote by $\mv(\lambda)_{\nu}$ the set of MV polytopes 
of highest vertex $\lambda$ and lowest vertex $\nu$. 
For each dominant coweight 
$\lambda \in X_{*}(T) \subset \Fh_{\BR}$, we set
\begin{equation*}
\mv(\lambda):=\bigsqcup_{
 \nu \in X_{*}(T)} \mv(\lambda)_{\nu}.
\end{equation*}

%==============================%
%     START SUBSECTION 0203    %
%==============================%
%
\subsection{Relation between MV polytopes and MV cycles.}
\label{subsec:geom}

In this subsection, we review the relation between MV polytopes 
and MV cycles in the affine Grassmannian. 

Let us recall the definition of MV cycles 
in the affine Grassmannian, following \cite{MV2} (and \cite{A}). 
Let $G$ be a complex, connected, reductive algebraic group 
as in (the beginning of) \S\ref{subsec:notation}. 
Let $\CO = \BC[[t]]$ denote the ring of formal power series, 
and $\CK = \BC((t))$ the field of formal Laurent series 
(the fraction field of $\CO$). 
The affine Grassmannian $\Gr$ for $G$ over $\BC$ is 
defined to be the quotient space $G(\CK)/G(\CO)$, 
equipped with the structure of a complex, algebraic ind-variety, 
where $G(\CK)$ denotes the set of $\CK$-valued points of $G$, and 
$G(\CO) \subset G(\CK)$ denotes the set of $\CO$-valued points of $G$; 
we denote by $\pi:G(\CK) \twoheadrightarrow \Gr=G(\CK)/G(\CO)$ 
the natural quotient map, which is locally trivial 
in the Zariski topology. In the following, 
for a subgroup $H \subset G(\CK)$ 
that is stable under the adjoint action of $T$ and 
for an element $w$ of the Weyl group $W \cong N_{G}(T)/T$ of $G$, 
we denote by ${}^{w}H$ the $w$-conjugate $\dot{w}H\dot{w}^{-1}$ of $H$, 
where $\dot{w} \in N_{G}(T)$ is a lift of $w \in W$. 

Since each coweight $\nu \in X_{*}(T)=\Hom(\BC^{\ast},\,T)$ 
is a regular map from $\BC^{\ast}$ to $T \subset G$, 
it gives a point $t^{\nu} \in G (\CK)$, 
which in turn, descends to a point 
$[t^{\nu}] \in \Gr=G(\CK)/G(\CO)$. 
The following simple lemma will be used in 
the proof of Lemma~\ref{lem:tran}. 
%
%%%%%%%%%%%%%%%
%%% lem:emb %%%
%%%%%%%%%%%%%%%
%
\begin{lem} \label{lem:emb}
Let $L \subset G$ be a complex, connected, 
reductive algebraic group containing the maximal torus $T$ of $G$. 
Then, for each $\nu \in X_{*}(T)$, the inclusion 
$L(\CK) [t^{\nu}] \hookrightarrow \Gr$ gives an embedding of 
the affine Grassmannian for $L$ into $\Gr$.
\end{lem}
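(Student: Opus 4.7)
The plan is to construct the natural map $\phi : \Gr_L := L(\CK)/L(\CO) \to \Gr$ induced by the inclusion $L \hookrightarrow G$ (well-defined since $L(\CO) \subset G(\CO)$), and then to identify its image with the orbit $L(\CK)[t^{\nu}]$ and show the map is an embedding.

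First, I would verify the image identification. Since $T \subset L$ and $\nu \in X_{*}(T)$, the element $t^{\nu}$ belongs to $T(\CK) \subset L(\CK)$, whence $L(\CK) \cdot t^{\nu} = L(\CK)$. Consequently,
\[
L(\CK)[t^{\nu}] \;=\; L(\CK)\,t^{\nu} G(\CO)/G(\CO) \;=\; L(\CK) \cdot G(\CO)/G(\CO),
\]
which is exactly the image of $\phi$; in particular, this image is independent of the choice of $\nu$.

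Next, I would prove injectivity of $\phi$. Suppose $g_{1},\,g_{2} \in L(\CK)$ satisfy $g_{1}G(\CO)=g_{2}G(\CO)$. Then $g_{1}^{-1}g_{2} \in L(\CK) \cap G(\CO)$. Because $L$ is a closed subgroup of $G$, any morphism $\mathrm{Spec}\,\CO \to G$ whose restriction to $\mathrm{Spec}\,\CK$ factors through $L$ must itself factor through $L$ (the factorization locus is a closed subscheme containing the generic point of the integral scheme $\mathrm{Spec}\,\CO$). Hence $L(\CK) \cap G(\CO) = L(\CO)$, and so $g_{1}L(\CO)=g_{2}L(\CO)$, proving injectivity.

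Finally, to upgrade set-theoretic injectivity to a locally closed embedding of ind-schemes, I would invoke the Zariski-local triviality of $\pi : G(\CK) \twoheadrightarrow \Gr$ (recorded earlier in the paper) combined with the fact that the closed immersion $L \hookrightarrow G$ induces closed embeddings $L(\CO) \hookrightarrow G(\CO)$ and $L(\CK) \hookrightarrow G(\CK)$ of (ind-)group schemes. Pulling back a local section of $\pi$ along $L(\CK) \hookrightarrow G(\CK)$ yields a local section of the orbit map $L(\CK) \twoheadrightarrow L(\CK)[t^{\nu}]$ compatible with $\phi$, so $\phi$ becomes a locally closed immersion with image $L(\CK)[t^{\nu}]$. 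I expect this last step to be the main technical obstacle, since it requires a careful comparison of ind-scheme structures rather than a purely group-theoretic argument; however, it is a standard consequence of the ind-scheme formalism for affine Grassmannians.
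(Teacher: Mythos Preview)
Your proof is correct and follows essentially the same approach as the paper: both reduce the assertion to the equality $L(\CK) \cap G(\CO) = L(\CO)$ (equivalently, the paper's $(t^{\nu} G(\CO) t^{-\nu}) \cap L(\CK) = t^{\nu} L(\CO) t^{-\nu}$), use that $t^{\nu} \in L(\CK)$ because $T \subset L$, and then remark that the ind-variety structures are compatible without spelling out the details. The only cosmetic difference is that you take the identity coset as base point and translate, whereas the paper computes the stabilizer of $[t^{\nu}]$ directly.
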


\begin{proof}
Observe that 
$\bigl(t^{\nu} G (\CO) t^{-\nu}\bigr) \cap L (\CK) 
 = t^{\nu} L (\CO) t^{-\nu}$.
Hence the map $i_{L}:L(\CK) \rightarrow \Gr$, 
$g \mapsto g[t^{\nu}]$, is factored through 
$L(\CK) / (t^{\nu} L (\CO) t^{-\nu})$. 
Since $t^{\nu} \in L (\CK)$, we conclude that 
the map $i_{L}:L(\CK) \rightarrow \Gr$ descends to a map between 
the affine Grassmannian for $L$ and $\Gr$, as desired. 
(This construction is only at the level of sets, 
but we can indeed show that the map above commutes 
with the ind-variety structures.)
\end{proof}

For each $\nu \in X_{*}(T)$, we set
\begin{equation*}
\Gr^{\nu}:=G(\CO)[t^{\nu}] \subset \Gr,
\end{equation*}
the $G(\CO)$-orbit of $[t^{\nu}]$, which is a smooth 
quasi-projective algebraic variety over $\BC$. 
Also, for each $\nu \in X_{*}(T)$ and $w \in W$, 
we set 
\begin{equation*}
S_{\nu}^{w} := {}^{w}U(\CK)[t^{\nu}] \subset \Gr,
\end{equation*}
the ${}^{w}U(\CK)$-orbit of $[t^{\nu}]$, which is a (locally closed) 
ind-subvariety of $\Gr$; we write simply $S_{\nu}$ for $S_{\nu}^{e}$.
Then, we know the following two kinds 
of decompositions of $\Gr$ into orbits. 
First, we have
\begin{equation*}
\Gr=\bigsqcup_{\lambda \in X_{*}(T)_{+}} \Gr^{\lambda} \qquad 
\text{(Cartan decomposition)},
\end{equation*}
with $\Gr^{w \cdot \lambda}=\Gr^{\lambda}$ 
for $\lambda \in X_{*}(T)_{+}$ and $w \in W$; 
note that (see, for example, \cite[\S2]{MV2}) 
for each $\lambda \in X_{*}(T)_{+}$, the quasi-projective 
variety $\Gr^{\lambda}$ is simply-connected, and of dimension 
$2\pair{\lambda}{\rho}$, where $\rho$ denotes the half-sum 
of the positive roots $\alpha \in \Delta_{+}$ for $G$, i.e., 
$2\rho=\sum_{\alpha \in \Delta_{+}}\alpha$. 
Second, we have for each $w \in W$, 
\begin{equation*}
\Gr=\bigsqcup_{\nu \in X_{*}(T)} S_{\nu}^{w} \qquad
\text{(Iwasawa decomposition)}.
\end{equation*}
Moreover, the (Zariski-) closure relations among these orbits are 
described as follows (see \cite[\S2 and \S3]{MV2}): 
%
%%%%%%%%%%%%%%%%
%%% eq:Grlam %%%
%%%%%%%%%%%%%%%%
%
\begin{equation} \label{eq:Grlam}
\ol{\Gr^{\lambda}}=
 \bigsqcup_{
   \begin{subarray}{c} 
   \lambda' \in X_{\ast}(T)_{+} \\[1mm]
   \lambda' \le \lambda
   \end{subarray}
 } \Gr^{\lambda'}
\qquad \text{for $\lambda \in X_{*}(T)_{+}$};
\end{equation}
%
%%%%%%%%%%%%%%%
%%% eq:Snuw %%%
%%%%%%%%%%%%%%%
%
\begin{equation} \label{eq:Snuw}
\ol{S_{\nu}^{w}}=
 \bigsqcup_{
   \begin{subarray}{c} 
   \gamma \in X_{\ast}(T) \\[1mm]
   w^{-1} \cdot \gamma \ge w^{-1} \cdot \nu
   \end{subarray}
 } S_{\gamma}^{w}
\qquad \text{for $\nu \in X_{\ast}(T)$ and $w \in W$}.
\end{equation}
%
%%%%%%%%%%%%%%%%
%%% rem:Snuw %%%
%%%%%%%%%%%%%%%%
%
\begin{rem} \label{rem:Snuw}
Let $\CX \subset \Gr$ be an irreducible algebraic subvariety, 
and $\nu \in X_{*}(T)$, $w \in W$. Then, 
it follows from \eqref{eq:Snuw} that the intersection 
$\CX \cap S_{\nu}^{w}$ is an open dense subset of $\CX$ 
if and only if $\CX \cap S_{\nu}^{w} \ne \emptyset$ and 
$\CX \cap S_{\gamma}^{w} = \emptyset$ 
for every $\gamma \in X_{*}(T)$ with 
$w^{-1} \cdot \gamma \not\ge w^{-1} \cdot \nu$. 
\end{rem}

For $\lambda \in X_{*}(T)_{+}$, 
let $L(\lambda)$ denote the irreducible finite-dimensional 
representation of the (complex) Langlands dual 
group $G^{\vee}$ of $G$ with highest weight $\lambda$, and 
$\Omega(\lambda) \subset X_{*}(T)$ 
the set of weights of $L(\lambda)$. 
We know from \cite[Theorem~3.2 and Remark~3.3]{MV2} that 
$\nu \in X_{*}(T)$ is an element of $\Omega(\lambda)$ 
if and only if $\Gr^{\lambda} \cap S_{\nu} \ne \emptyset$, 
and then the intersection $\Gr^{\lambda} \cap S_{\nu}$ 
is of pure dimension $\pair{\lambda-\nu}{\rho}$. 

Now we come to the definition of MV cycles 
in the affine Grassmannian.
%
%%%%%%%%%%%%%%%%%%%
%%% dfn:MVcycle %%%
%%%%%%%%%%%%%%%%%%%
%
\begin{dfn}[{\cite[\S3]{MV2}; see also \cite[\S5.3]{A}}] \label{dfn:MVcycle}
Let $\lambda \in X_{*}(T)_{+}$ and $\nu \in X_{*}(T)$ be such that 
$\Gr^{\lambda} \cap S_{\nu} \ne \emptyset$, 
i.e., $\nu \in \Omega(\lambda)$. 
An MV cycle of highest weight $\lambda$ and weight $\nu$ is 
defined to be an irreducible component of the (Zariski-) closure of 
the intersection $\Gr^{\lambda} \cap S_{\nu}$. 
\end{dfn}

We denote by $\CZ(\lambda)_{\nu}$ the set of MV cycles of 
highest weight $\lambda \in X_{*}(T)_{+}$ and weight $\nu \in X_{*}(T)$. 
Also, for each $\lambda \in X_{*}(T)_{+}$, we set 
\begin{equation*}
\CZ(\lambda) := 
 \bigsqcup_{\nu \in X_{*}(T)} \CZ(\lambda)_{\nu},
\end{equation*}
where $\CZ(\lambda)_{\nu} := \emptyset$ if 
$\Gr^{\lambda} \cap S_{\nu}=\emptyset$. 
%
%%%%%%%%%%%%%%
%%% ex:MVc %%%
%%%%%%%%%%%%%%
%
\begin{ex}[{cf.~\cite[Eq.(3.6)]{MV2}}] \label{ex:MVc}
For each $\lambda \in X_{\ast}(T)_{+}$, we have
\begin{equation*}
\CZ(\lambda)_{\lambda} = \bigl\{\,[ t^{\lambda}]\,\bigr\}, 
\quad \text{and} \quad
\CZ(\lambda)_{w_{0} \lambda} = \bigl\{\,\ol{\Gr^{\lambda}}\,\bigr\}.
\end{equation*}
\end{ex}
%
%%%%%%%%%%%%%%%%%%
%%% rem:extcyc %%%
%%%%%%%%%%%%%%%%%%
%
\begin{rem}[{\cite[Lemma~5.2]{NP}, \cite[Eq.(3.6)]{MV2}}] 
\label{rem:extcyc}
Let $\lambda \in X_{*}(T)_{+}$. If $\nu \in X_{*}(T)$ is 
of the form $\nu=x \cdot \lambda$ for some $x \in W$, then 
\begin{equation*}
\bb_{x \cdot \lambda}:=
\ol{ U(\CO)[t^{x \cdot \lambda}] } \subset 
\ol{ G(\CO)[t^{\lambda}] \cap U(\CK)[t^{x \cdot \lambda}] }=
\ol{ \Gr^{\lambda} \cap S_{x \cdot \lambda} }
\end{equation*}
is the unique MV cycle of highest weight $\lambda$ 
and weight $x \cdot \lambda$ 
(extremal MV cycle of weight $x \cdot \lambda$). 
For an explicit (combinatorial) description 
of the corresponding extremal MV polytope, 
see \S\ref{subsec:extpoly} below. 
\end{rem}

Motivated by the discovery of MV cycles in the affine Grassmannian, 
Anderson \cite{A} proposed considering the ``moment map images'' 
of MV cycles as follows: Let $\lambda \in X_{*}(T)_{+}$. 
For an MV cycle $\bb \in \CZ(\lambda)$, we set 
\begin{equation*}
P(\bb):=\Conv \bigl\{
  \nu \in X_{\ast}(T) \subset \Fh_{\BR} \mid 
  [t^{\nu}] \in \bb \bigr\},
\end{equation*}
and call $P(\bb) \subset \Fh_{\BR}$ the moment map 
image of $\bb$\,; note that $P(\bb)$ is indeed 
a convex polytope in $\Fh_{\BR}$. 

For the rest of this paper, except in \S\ref{subsec:BG} and the Appendix, 
we assume that $G$ (and hence its Lie algebra $\Fg$) is semisimple. 
The following theorem, due to Kamnitzer \cite{Kam1}, 
establishes an explicit relationship between 
MV polytopes and MV cycles. 
%
%%%%%%%%%%%%%%%%
%%% thm:Kam1 %%%
%%%%%%%%%%%%%%%%
%
\begin{thm} \label{thm:Kam1}
{\rm (1)} 
Let $\lambda \in X_{*}(T)_{+}$ and $\nu \in X_{*}(T)$ be 
such that $\Gr^{\lambda} \cap S_{\nu} \ne \emptyset$. 
If $\mu_{\bullet}=(\mu_{w})_{w \in W}$ denotes the GGMS datum 
of an MV polytope $P \in \mv(\lambda)_{\nu}$, that is, 
$P=P(\mu_{\bullet}) \in \mv(\lambda)_{\nu}$, then 
\begin{equation*}
\bb(\mu_{\bullet}):=
\ol{ \bigcap_{w \in W} S^{w}_{\mu_{w}} } \subset \ol{\Gr^{\lambda}}
\end{equation*}
is an MV cycle that belongs to $\CZ(\lambda)_{\nu}$. 

{\rm (2)} 
Let $\lambda \in X_{*}(T)_{+}$. 
For an MV polytope $P=P(\mu_{\bullet}) \in \mv(\lambda)$ 
with GGMS datum $\mu_{\bullet}$, 
we set $\Phi_{\lambda}(P):=\bb(\mu_{\bullet})$. Then, 
the map $\Phi_{\lambda}:\mv(\lambda) \rightarrow \CZ(\lambda)$, 
$P \mapsto \Phi_{\lambda}(P)$, is a bijection 
from $\mv(\lambda)$ onto $\CZ(\lambda)$ 
such that $\Phi_{\lambda}(\mv(\lambda)_{\nu})=
\CZ(\lambda)_{\nu}$ for all $\nu \in X_{\ast}(T)$
with $\Gr^{\lambda} \cap S_{\nu} \ne \emptyset$. 
In particular, for each MV cycle $\bb \in \CZ(\lambda)$, 
there exists a unique MV datum $\mu_{\bullet}$ 
such that $\bb=\bb(\mu_{\bullet})$, and in this case, 
the moment map image $P(\bb)$ of the MV cycle $\bb=\bb(\mu_{\bullet})$
is identical to the MV polytope $P(\mu_{\bullet}) \in \mv(\lambda)$. 
\end{thm}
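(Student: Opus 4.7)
My plan is to build the inverse map to $\Phi_\lambda$ explicitly, then to verify that the closure $\bb(\mu_\bullet) = \overline{\bigcap_{w \in W} S^w_{\mu_w}}$ has the geometric properties required of an MV cycle. For the inverse direction, given an MV cycle $\bb \in \CZ(\lambda)_\nu$, I would attach to $\bb$ the collection $\mu_\bullet(\bb) = (\mu_w(\bb))_{w \in W}$, where $\mu_w(\bb) \in X_{\ast}(T)$ is the unique coweight $\gamma$ such that $\bb \cap S^w_\gamma$ is open and dense in $\bb$. Existence and uniqueness follow from the Iwasawa decomposition $\Gr = \bigsqcup_\gamma S^w_\gamma$ applied to the irreducible variety $\bb$, together with Remark~\ref{rem:Snuw} and the closure relations~\eqref{eq:Snuw}. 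The first sanity check is that $\mu_\bullet(\bb)$ is a GGMS datum: for $w, w' \in W$, the $T$-fixed point $[t^{\mu_{w'}(\bb)}]$ lies in $\bb$, hence in $\overline{\bb \cap S^w_{\mu_w(\bb)}} \subseteq \overline{S^w_{\mu_w(\bb)}}$, and \eqref{eq:Snuw} then forces $w^{-1} \cdot \mu_{w'}(\bb) - w^{-1} \cdot \mu_w(\bb) \in Q^{\vee}_{+}$.

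For part (1), I would show that for an MV datum $\mu_\bullet$ with $\mu_{w_0} = \lambda$ and $\mu_e = \nu$, the intersection $\bigcap_{w \in W} S^w_{\mu_w}$ is non-empty, and its closure $\bb(\mu_\bullet)$ is an irreducible subvariety of $\overline{\Gr^\lambda} \cap \overline{S_\nu}$ of the expected dimension $\pair{\lambda-\nu}{\rho}$. Containment in $\overline{\Gr^\lambda}$ follows from the Anderson characterization (which places $P(\mu_\bullet) \subseteq \Conv(W \cdot \lambda)$, so each $[t^{\mu_w}] \in \overline{\Gr^\lambda}$), and containment in $\overline{S_\nu}$ is the case $w=e$. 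Irreducibility and the dimension count are where the full MV axioms enter: I would argue by induction on the (weak) Bruhat order, fixing a reduced word $\bi \in R(w_0)$ and propagating from $[t^{\mu_{w_0}}] = [t^\lambda]$ along successive $U(\CK)$-orbits governed by the length formula~\eqref{eq:n}; the $2$-move and $3$-move compatibility conditions of Definition~\ref{dfn:MV} ensure that the piece produced is independent of the chosen reduced word, by reducing across each wall to rank-one and rank-two affine-Grassmannian computations.

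Once both constructions are in place, they are mutually inverse: by construction $\bb(\mu_\bullet) \cap S^w_{\mu_w}$ is open dense in $\bb(\mu_\bullet)$, whence uniqueness in the Iwasawa stratification gives $\mu_w(\bb(\mu_\bullet)) = \mu_w$; conversely $\bb(\mu_\bullet(\bb)) = \bb$ follows from irreducibility of $\bb$ together with the fact that both sides share the open dense subset $\bigcap_w (\bb \cap S^w_{\mu_w(\bb)})$. The moment map image statement is then obtained from a $T$-fixed point analysis: every $[t^\gamma] \in \bb(\mu_\bullet)$ satisfies $w^{-1} \cdot \gamma \ge w^{-1} \cdot \mu_w$ for all $w$ by \eqref{eq:Snuw}, hence $\gamma \in P(\mu_\bullet)$ by \eqref{eq:poly}; and the vertices $\mu_w$ themselves are $T$-fixed points of $\bb(\mu_\bullet)$, arising as limits of orbits of suitable one-parameter subgroups, so \eqref{eq:conv} gives $P(\bb(\mu_\bullet)) = \Conv\{\mu_w \mid w \in W\} = P(\mu_\bullet)$.

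The main obstacle is the irreducibility and dimension count in part~(1): it is precisely here that the MV axioms (as opposed to generic GGMS data) are essential, because the $3$-move relations encode exactly the geometry of the wall-crossings across the rank-two $\mathrm{SL}_3$-factors of the affine Grassmannian. Making this propagation-of-irreducibility argument airtight --- and verifying that the Berenstein-Zelevinsky tropical identities appearing in Definition~\ref{dfn:MV}(2) are precisely what is needed to patch together the rank-two computations into a single irreducible $\bb(\mu_\bullet)$ --- is the delicate combinatorial/geometric core of the proof.
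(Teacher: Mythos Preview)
The paper does not supply its own proof of this theorem: it is stated as a result of Kamnitzer, with the attribution ``The following theorem, due to Kamnitzer \cite{Kam1}, establishes an explicit relationship between MV polytopes and MV cycles,'' and no argument is given in the text. So there is nothing in the paper to compare your proposal against.

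That said, your outline is essentially the strategy Kamnitzer uses in \cite{Kam1}: associate to each irreducible $\bb$ the collection $(\mu_w(\bb))_w$ via generic Iwasawa strata, verify the GGMS inequalities from the closure relations, and conversely show that an MV datum cuts out an irreducible variety of the correct dimension, with the tropical Pl\"ucker relations (your $2$- and $3$-move conditions) entering precisely in the reduced-word-independence step. One point to tighten: your justification for $\bb(\mu_\bullet) \subset \ol{\Gr^\lambda}$ is not quite right as stated --- knowing that each vertex $[t^{\mu_w}]$ lies in $\ol{\Gr^\lambda}$ does not by itself force the whole intersection $\bigcap_w S^w_{\mu_w}$ to lie there. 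The clean argument uses the single stratum $S^{w_0}_{\mu_{w_0}} = S^{w_0}_\lambda$: one has $S^{w_0}_\lambda \cap \ol{\Gr^{\lambda'}} = \emptyset$ for every dominant $\lambda' < \lambda$ (an easy consequence of \eqref{eq:Snuw} applied with $w=w_0$ together with the fact that $w_0\lambda$ is the lowest weight of $L(\lambda)$), so $S^{w_0}_\lambda \cap \ol{\Gr^{\lambda}} \subset \Gr^\lambda$ and hence $\bigcap_w S^w_{\mu_w} \subset S^{w_0}_\lambda \cap \ol{\Gr^{\lambda_1+\lambda_2}}$ reduces to a dimension/irreducibility statement inside a single $G(\CO)$-orbit. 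Apart from that, your identification of the ``delicate core'' is accurate and matches what Kamnitzer actually does.
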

%
%%%%%%%%%%%%%%%%%%
%%% rem:moment %%%
%%%%%%%%%%%%%%%%%%
%
\begin{rem}[{\cite[\S2.2]{Kam1}}] \label{rem:moment}
For $\nu \in X_{*}(T)$ and $w \in W$, 
the ``moment map image'' $P(\ol{S_{\nu}^{w}})$ of 
$\ol{S_{\nu}^{w}}$ is, by definition, 
the convex hull in $\Fh_{\BR}$ of the set
$\bigl\{
  \gamma \in X_{*}(T) \subset \Fh_{\BR} \mid 
  [t^{\gamma}] \in \ol{S_{\nu}^{w}}
\bigr\} \subset \Fh_{\BR}$,
which is identical to the (shifted) convex cone
$\bigl\{
 v \in \Fh_{\BR} \mid 
 w^{-1} \cdot v - w^{-1} \cdot \nu \in 
 \textstyle{\sum_{j \in I}\BR_{\ge 0}h_{j}}
\bigr\}$. 
\end{rem}

%==============================%
%     START SUBSECTION 0204    %
%==============================%
%
\subsection{Lusztig-Berenstein-Zelevinsky (LBZ) crystal structure.}
\label{subsec:cry-MV}

We keep the notation and assumptions of \S\ref{subsec:MV}.
For an MV datum $\mu_{\bullet}=(\mu_{w})_{w \in W}$ and $j \in I$, 
we denote by $f_{j}\mu_{\bullet}$ 
(resp., $e_{j}\mu_{\bullet}$ if $\mu_{e} \ne \mu_{s_{j}}$; 
note that $\mu_{s_{j}}-\mu_{e} \in 
\BZ_{\ge 0}h_{j}$ by \eqref{eq:length})
a unique MV datum $\mu_{\bullet}'=(\mu_{w}')_{w \in W}$ 
such that $\mu_{e}'=\mu_{e}-h_{j}$ 
(resp., $\mu_{e}'=\mu_{e}+h_{j}$) and 
$\mu_{w}'=\mu_{w}$ for all $w \in W$ with $s_{j}w < w$ 
(see \cite[Theorem~3.5]{Kam2} and its proof); 
note that $\mu_{w_{0}}'=\mu_{w_{0}}$ and 
$\mu_{s_j}'=\mu_{s_j}$. 

Let $\lambda \in X_{\ast}(T) \subset \Fh_{\BR}$ 
be a dominant coweight. 
Following \cite[\S6.2]{Kam2}, we endow $\mv(\lambda)$ 
with the Lusztig-Berenstein-Zelevinsky (LBZ) 
crystal structure for $U_{q}(\Fg^{\vee})$
as follows. 
Let $P=P(\mu_{\bullet}) \in \mv(\lambda)$ be 
an MV polytope with GGMS datum 
$\mu_{\bullet}=(\mu_{w})_{w \in W}$.
The weight $\wt(P)$ of $P$ is, by definition, 
equal to the vertex $\mu_{e} \in \lambda-Q^{\vee}_{+}$. 
For each $j \in I$, 
we define the lowering Kashiwara operator
$f_{j}:\mv(\lambda) \cup \{\bzero\} \rightarrow 
 \mv(\lambda) \cup \{\bzero\}$ and 
the raising Kashiwara operator
$e_{j}:\mv(\lambda) \cup \{\bzero\} \rightarrow 
 \mv(\lambda) \cup \{\bzero\}$ by: 
\begin{align*}
e_{j}\bzero=f_{j}\bzero & :=\bzero, \\[3mm]
f_{j}P=f_{j}P(\mu_{\bullet}) & :=
\begin{cases}
 P(f_{j}\mu_{\bullet}) 
 & \text{if $P(f_{j}\mu_{\bullet}) \subset 
         \Conv (W \cdot \lambda)$}, \\[1.5mm]
 \bzero & \text{otherwise}, 
 \end{cases} \\[3mm]
e_{j}P=e_{j}P(\mu_{\bullet}) & :=
\begin{cases}
 P(e_{j}\mu_{\bullet}) & 
 \text{if $\mu_{e} \ne \mu_{s_{j}}$ 
 (i.e., $\mu_{s_{j}}-\mu_{e} \in \BZ_{> 0}h_{j}$)}, \\[1.5mm]
 \bzero & \text{otherwise}, 
 \end{cases}
\end{align*}
where $\bzero$ is an additional element, 
not contained in $\mv(\lambda)$. 
For $j \in I$, we set 
$\ve_{j}(P):=
 \max \bigl\{k \in \BZ_{\ge 0} \mid e_{j}^{k}P \ne \bzero \bigr\}$ and 
$\vp_{j}(P):=
 \max \bigl\{k \in \BZ_{\ge 0} \mid f_{j}^{k}P \ne \bzero \bigr\}$;
note that for each $j \in I$, we have
%
%%%%%%%%%%%%%
%%% eq:ax %%%
%%%%%%%%%%%%%
%
\begin{equation} \label{eq:ax}
\vp_{j}(P)=
\pair{\wt(P)}{\alpha_{j}}+\ve_{j}(P) 
 \qquad \text{for all $P \in \mv(\lambda)$.}
\end{equation}
\begin{rem} \label{rem:ve}
Let $P=P(\mu_{\bullet}) \in \mv(\lambda)$ be 
an MV polytope with GGMS datum 
$\mu_{\bullet}=(\mu_{w})_{w \in W}$. 
Then, we deduce from the definition of 
the raising Kashiwara operators $e_{j}$ 
(or, the MV datum $e_{j}\mu_{\bullet}$) that 
$\mu_{s_{j}}-\mu_{e}=\ve_{j}(P) h_{j}$ 
for $j \in I$.
\end{rem}

%%%%%%%%%%%%%%%%%%%
%%% thm:kam-int %%%
%%%%%%%%%%%%%%%%%%%
%
\begin{thm}[{\cite[Theorem~6.4]{Kam2}}] \label{thm:kam-int}
The set $\mv(\lambda)$, equipped with the maps 
$\wt$, $e_{j},\,f_{j} \ (j \in I)$, and 
$\ve_{j},\,\vp_{j} \ (j \in I)$ above, is 
a crystal for $U_{q}(\Fg^{\vee})$. 
Moreover, there exists a unique isomorphism 
$\Psi_{\lambda}:\CB(\lambda) 
 \stackrel{\sim}{\rightarrow} 
 \mv(\lambda)$ of crystals for $U_{q}(\Fg^{\vee})$. 
\end{thm}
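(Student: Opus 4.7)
The plan is to proceed in two stages: first verify that $\mv(\lambda)$ equipped with the given operations satisfies the $U_q(\Fg^\vee)$-crystal axioms, then construct the isomorphism $\Psi_\lambda : \CB(\lambda) \to \mv(\lambda)$ via Lusztig--Berenstein--Zelevinsky parameterization by edge lengths. The axioms are immediate from the explicit construction of $f_j \mu_\bullet$ and $e_j \mu_\bullet$: since only the vertex $\mu_e$ changes, and by $\mp h_j$, the weight identities $\wt(f_j P) = \wt(P) - h_j$ and $\wt(e_j P) = \wt(P) + h_j$ are clear, and the involution relation $e_j f_j P = P$ when $f_j P \ne \bzero$ follows from the uniqueness of the MV datum $\mu_\bullet'$ with prescribed values at $w \ne e$. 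The identity $\vp_j(P) - \ve_j(P) = \pair{\wt(P)}{\alpha_j}$ of \eqref{eq:ax} is verified by applying the length formula \eqref{eq:n} along a reduced word of $w_0$ beginning with $s_j$ and combining with Remark~\ref{rem:ve}.

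For the existence of $\Psi_\lambda$, I would use the edge-length encoding. Fix a reduced word $\bi=(i_1,\ldots,i_m) \in R(w_0)$. Iterating \eqref{eq:length} shows that the tuple $(\Ni{1},\ldots,\Ni{m}) \in \BZ_{\ge 0}^m$ uniquely determines the GGMS datum $\mu_\bullet$, so MV polytopes are in bijection with those collections of $\BZ_{\ge 0}^m$-tuples (one per reduced word) that are consistent under the 2-move and 3-move transitions of Definition~\ref{dfn:MV}. On the algebraic side, the PBW basis of $U_q^+(\Fg^\vee)$ associated to $\bi$ parameterizes $\CB(\infty)$ by the same set $\BZ_{\ge 0}^m$, and Lusztig's braid-group change-of-basis formulas, once tropicalized after Berenstein--Fomin--Zelevinsky, yield precisely the 2-move and 3-move identities above. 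This produces a bijection $\mv(\infty) \leftrightarrow \CB(\infty)$; restricting via the standard Kashiwara embedding $\CB(\lambda) \hookrightarrow \CB(\infty) \otimes T_\lambda$ corresponds on the polytope side to the dominance constraint $P \subset \Conv(W \cdot \lambda)$ in the definition of $\mv(\lambda)$, giving the bijection $\Psi_\lambda$. Intertwining with the Kashiwara operators is verified one index at a time: for $\bi$ beginning with $s_j$, the first edge length $\Ni{1}$ equals $\ve_j(P)$ by Remark~\ref{rem:ve} and equals the corresponding PBW exponent on the crystal side, and both $f_j, e_j$ shift this coordinate by $\mp 1$. Uniqueness of $\Psi_\lambda$ follows because both $\CB(\lambda)$ and $\mv(\lambda)$ are generated from their unique highest weight elements ($u_\lambda$ and the one-point polytope $\{\lambda\}$, respectively) by applications of the lowering operators.

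The main obstacle is verifying the precise agreement between the tropicalized Lusztig transition formulas and the combinatorial 2-move/3-move conditions of Definition~\ref{dfn:MV}; even in the simply-laced case this amounts to a nontrivial piecewise-linear identity for rank-two subsystems, and the non-simply-laced case (omitted in the excerpt) adds further type-dependent formulas. An arguably cleaner route is to bypass the tropical computation altogether and argue geometrically: use the bijection $\mv(\lambda) \cong \CZ(\lambda)$ of Theorem~\ref{thm:Kam1} together with the geometric Satake equivalence, which identifies MV cycles with a canonical basis of the irreducible $G^\vee$-representation $L(\lambda) = V(\lambda)$, and transport the Braverman--Finkelberg--Gaitsgory crystal structure on $\CZ(\lambda)$ (defined via convolution with small $G(\CO)$-orbits in $\Gr$) across to $\mv(\lambda)$. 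The crystal isomorphism with $\CB(\lambda)$ then reduces to the general fact that the geometric Satake crystal on $\CZ(\lambda)$ agrees with the Kashiwara crystal on $\CB(\lambda)$.
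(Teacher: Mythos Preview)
The paper does not give its own proof of this theorem: it is quoted verbatim from \cite[Theorem~6.4]{Kam2} and used as a black box. So there is nothing in the present paper to compare your proposal against.

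That said, your sketch is essentially an outline of Kamnitzer's argument in \cite{Kam1,Kam2}. The key step you flag as the ``main obstacle''---matching the tropicalized Lusztig transition formulas with the edge-length conditions of Definition~\ref{dfn:MV}---is exactly the content of the main theorem of \cite{Kam1}, and it is indeed nontrivial. Your alternative geometric route via $\CZ(\lambda)$ and the BFG crystal structure is also in the literature (this is the compatibility proved in \cite{Kam2} and alluded to in the Remark following Theorem~\ref{thm:kam-int}), but note that it does not bypass the combinatorial work: one still needs to know that the LBZ crystal structure on $\mv(\lambda)$ agrees with the BFG structure transported from $\CZ(\lambda)$, which again rests on the identification of MV polytopes with Lusztig data.
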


\begin{rem}
Kamnitzer \cite{Kam2} proved that 
for each $\lambda \in X_{*}(T)_{+}$, the bijection 
$\Phi_{\lambda}:\mv(\lambda) \rightarrow \CZ(\lambda)$ in 
Theorem~\ref{thm:Kam1}\,(2) also intertwines the LBZ crystal 
structure on $\mv(\lambda)$ and the crystal structure on $\CZ(\lambda)$ 
defined in \cite{BrGa} (and \cite{BFG}). 
\end{rem}

For each $x \in W$, we denote by 
$\mv_{x}(\lambda) \subset \mv(\lambda)$ 
the image $\Psi_{\lambda}(\CB_{x}(\lambda))$ 
of the Demazure crystal 
$\CB_{x}(\lambda) \subset \CB(\lambda)$ 
associated to $x \in W$ under the isomorphism 
$\Psi_{\lambda}:\CB(\lambda) 
 \stackrel{\sim}{\rightarrow} 
 \mv(\lambda)$ in Theorem~\ref{thm:kam-int};
for a combinatorial description of $\mv_{x}(\lambda)$ 
in terms of the lengths $\Ni{l} \in \BZ_{\ge 0}$, 
$\bi \in R(w_{0})$, $0 \le l \le m$, 
of edges of an MV polytope, see \cite[\S3.2]{NS-dp}. 

%==============================%
%     START SUBSECTION 0205    %
%==============================%
%
\subsection{Extremal MV polytopes.}
\label{subsec:extpoly}

Let $\Fg$ be a complex semisimple Lie algebra as in 
(the second paragraph of) \S\ref{subsec:notation}. 
Let $\lambda \in X_{\ast}(T) \subset \Fh_{\BR}$ be 
a dominant coweight. 
For each $x \in W$, we denote by 
$P_{x \cdot \lambda}$ the image of the extremal element 
$u_{x \cdot \lambda} \in \CB(\lambda)$ of weight 
$x \cdot \lambda \in X_{\ast}(T) \subset \Fh_{\BR}$ 
under the isomorphism 
$\Psi_{\lambda}:\CB(\lambda) 
 \stackrel{\sim}{\rightarrow} 
 \mv(\lambda)$
in Theorem~\ref{thm:kam-int}; 
we call $P_{x \cdot \lambda} \in \mv(\lambda)$ 
the extremal MV polytope of weight $x \cdot \lambda$.
We know the following polytopal description of 
the extremal MV polytopes 
from \cite[Theorem~4.1.5\,(2)]{NS-dp}.
%
%%%%%%%%%%%%%%%%
%%% prop:ext %%%
%%%%%%%%%%%%%%%%
%
\begin{prop} \label{prop:ext}
Let $\lambda \in X_{\ast}(T) \subset \Fh_{\BR}$ 
be a dominant coweight, and $x \in W$. 
The extremal MV polytope 
$P_{x \cdot \lambda}$ of weight $x \cdot \lambda$ 
is identical to the convex hull 
$\Conv(W_{\le x} \cdot \lambda)$ 
in $\Fh_{\BR}$ of the set $W_{\le x} \cdot \lambda$, 
where $W_{\le x}$ denotes the subset 
$\bigl\{z \in W \mid z \le x\bigr\}$ of $W$. 
\end{prop}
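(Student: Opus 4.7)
The strategy is to exploit the geometric realization of MV polytopes as moment map images of MV cycles (Theorem~\ref{thm:Kam1}), combined with the explicit description of extremal MV cycles in Remark~\ref{rem:extcyc}. First, I would identify $\Phi_\lambda(P_{x \cdot \lambda})$ with $\bb_{x \cdot \lambda}=\ol{U(\CO)[t^{x\cdot\lambda}]}$. Since $\Phi_\lambda$ intertwines the LBZ crystal structure on $\mv(\lambda)$ with the BFG crystal structure on $\CZ(\lambda)$ (as noted after Theorem~\ref{thm:kam-int}), it is a weight-preserving bijection. Because the weight space $V(\lambda)_{x\cdot\lambda}$ is one-dimensional, $\mv(\lambda)$ and $\CZ(\lambda)$ each contain a unique element of weight $x\cdot\lambda$, namely $P_{x\cdot\lambda}$ and $\bb_{x\cdot\lambda}$ respectively, and these must correspond under $\Phi_\lambda$.

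Once this identification is in hand, Theorem~\ref{thm:Kam1}(2) gives $P_{x \cdot \lambda}=P(\bb_{x \cdot \lambda})=\Conv\bigl\{\gamma \in X_*(T) : [t^\gamma] \in \bb_{x \cdot \lambda}\bigr\}$, so the proposition reduces to the set-theoretic claim
$$\bigl\{\gamma \in X_*(T) : [t^\gamma] \in \ol{U(\CO)[t^{x \cdot \lambda}]}\bigr\}=W_{\le x} \cdot \lambda.$$
The inclusion ``$\supseteq$'' I would prove by induction on $\ell(x)-\ell(y)$ for $y\le x$: using a reduced subword expression of $y$ inside a reduced expression of $x$, one builds a chain $y=y_0<y_1<\cdots<y_r=x$ by simple reflections, and at each step exhibits $[t^{y_s\cdot\lambda}]$ as the limit of a suitable one-parameter family lying inside $U(\CO)[t^{y_{s+1}\cdot\lambda}]$. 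The inclusion ``$\subseteq$'' is the subtler one: the Iwasawa closure relation \eqref{eq:Snuw} combined with $\bb_{x \cdot \lambda} \subseteq \ol{S_{x \cdot \lambda}}\cap\ol{\Gr^\lambda}$ yields only the weaker cone condition $y\cdot\lambda-x\cdot\lambda\in Q^\vee_+$.

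The main obstacle is upgrading this cone condition to the full Bruhat constraint $y\le x$, reflecting the distinction between the smaller $U(\CO)$-orbit and the Iwasawa stratum $U(\CK)[t^{x\cdot\lambda}]$. I would handle it by projecting to a suitable finite-dimensional partial flag variety of $G$ and invoking classical Bruhat closure relations there. As a purely combinatorial alternative, one could verify directly that the GGMS datum $\mu^x_\bullet$ whose pseudo-Weyl polytope is $\Conv(W_{\le x}\cdot\lambda)$ is an MV datum in the sense of Definition~\ref{dfn:MV}, and that $\mu^x_e=x\cdot\lambda$ together with $\mu^x_{s_j}=\mu^x_e$ for all $j$ with $s_j x > x$; the latter follows from the lifting property $s_j y\le s_j x$ for $y\le x$ and $s_j x>x$. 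By Remark~\ref{rem:ve} and the uniqueness of extremal elements of weight $x\cdot\lambda$ in $\CB(\lambda)$, this identifies the corresponding crystal element as $u_{x \cdot \lambda}$. The combinatorial route sidesteps the geometric delicacy but requires checking the somewhat involved 2-move and 3-move conditions of Definition~\ref{dfn:MV}.
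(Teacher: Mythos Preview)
The paper does not actually prove this proposition; it is quoted from \cite[Theorem~4.1.5\,(2)]{NS-dp}, with the remark that the non-simply-laced case follows by folding. Judging from the companion result Theorem~\ref{thm:GGMS-Ext} (also imported from \cite{NS-dp}), the argument there is combinatorial: one writes down the GGMS datum of $P_{x\cdot\lambda}$ explicitly as $\mu_{\wi{l}}=\yi{l}\cdot\lambda$ via the recursion \eqref{eq:yi}, and then identifies the resulting vertex set with $W_{\le x}\cdot\lambda$. So there is no proof in the present paper to compare against directly.

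Your geometric route is a genuinely different idea, and the reduction to computing the $T$-fixed points of $\ol{U(\CO)[t^{x\cdot\lambda}]}$ is correct. You rightly flag the ``$\subseteq$'' direction as the real issue. The suggestion to project to a finite partial flag variety is on the right track: the closed embedding $G/P_{\lambda}\hookrightarrow\Gr^{\lambda}$, $gP_{\lambda}\mapsto g[t^{\lambda}]$, carries the finite Schubert variety $\ol{U\cdot xP_{\lambda}}$ into $\bb_{x\cdot\lambda}$, and one then needs that the inclusion $U\hookrightarrow U(\CO)$ does not enlarge the $T$-fixed locus of the orbit closure. This is true (all $T$-fixed points of $\ol{\Gr^{\lambda}}$ already lie in the image of $G/P_{\lambda}$), but it is a step that deserves to be stated rather than gestured at.

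Your combinatorial alternative is closer in spirit to what \cite{NS-dp} does, but as written it hides the main difficulty. You speak of ``the GGMS datum $\mu^x_\bullet$ whose pseudo-Weyl polytope is $\Conv(W_{\le x}\cdot\lambda)$'' as if it were already in hand; but identifying that datum explicitly is precisely the content of Theorem~\ref{thm:GGMS-Ext}, and is where the real work lies. Once the datum is known, the extra verification that $\mu^x_{s_j}=\mu^x_e$ for $s_jx>x$ is unnecessary: the weight $\mu^x_e=x\cdot\lambda$ alone pins down the crystal element, since $\dim V(\lambda)_{x\cdot\lambda}=1$.
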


\begin{rem}
In \cite{NS-dp}, we proved Proposition~\ref{prop:ext} above and 
Theorem~\ref{thm:GGMS-Ext} below in the case that 
$\Fg$ is simply-laced. However, 
these results hold also in the case that 
$\Fg$ is not simply-laced; for example, we can use 
a standard technique of ``folding'' by diagram automorphisms 
(see \cite{NS-fm}, \cite{Hong}, and also \cite{Lu08}). 
\end{rem}

%%%%%%%%%%%%%%%
%%% rem:ext %%%
%%%%%%%%%%%%%%%
%
\begin{rem} \label{rem:ext}
It follows from Theorem~\ref{thm:Kam1} 
that for each $\lambda \in X_{*}(T)_{+} \subset \Fh_{\BR}$ 
and $x \in W$, the extremal MV polytope $P_{x \cdot \lambda}$ 
is identical to the moment map image $P(\bb_{x \cdot \lambda})$ 
of the extremal MV cycle $\bb_{x \cdot \lambda}$ 
(see Remark~\ref{rem:extcyc}). 
In particular, the highest weight element 
$P_{e \cdot \lambda}=P_{\lambda}$ of $\mv(\lambda)$ 
is identical to the set $P([t^{\lambda}])=\bigl\{\lambda\bigr\}$, 
and the lowest weight element $P_{w_{0} \cdot \lambda}$ of 
$\mv(\lambda)$ is identical to the set 
$P(\ol{\Gr^{\lambda}})=\Conv(W \cdot \lambda)$. 
\end{rem}

The GGMS datum of an extremal MV polytope is given 
as follows (see \cite[\S4.1]{NS-dp}). 
Let us fix a dominant coweight 
$\lambda \in X_{*}(T) \subset \Fh_{\BR}$ 
and $x \in W$ arbitrarily. 
Let $p$ denote the length $\ell(xw_{0})$ of $xw_{0} \in W$. 
For each $\bi=(i_{1},\,i_{2},\,\dots,\,i_{m}) \in R(w_{0})$, 
with $m=\ell(w_{0})$, we set 
\begin{equation*}
S(xw_{0},\,\bi)=\left\{
\begin{array}{l|l}
 (a_{1},\,a_{2},\,\dots,\,a_{p}) \in [1,m]_{\BZ}^{p} \ & \ 
\begin{array}{l}
1 \le a_{1} < a_{2} < \cdots < a_{p} \le m, \\[1.5mm]
\si{a_1}\si{a_2} \cdots \si{a_p}=xw_{0}
\end{array}
\end{array}
\right\},
\end{equation*}
where $[1,m]_{\BZ}:=\bigl\{a \in \BZ \mid 1 \le a \le m\bigr\}$.
We denote by $\min S(xw_{0},\,\bi)$ 
the minimum element of the set $S(xw_{0},\,\bi)$ 
in the lexicographic ordering;
recall that the lexicographic ordering $\succeq$ on 
$S(xw_{0},\,\bi)$ is defined as follows: 
$(a_{1},\,a_{2},\,\dots,\,a_{p}) \succ
 (b_{1},\,b_{2},\,\dots,\,b_{p})$ 
if there exists some integer $1 \le q_0 \le p$ 
such that $a_{q}=b_{q}$ for all $1 \le q \le q_0-1$ 
and $a_{q_0} > b_{q_0}$. 
Now we define a sequence 
$\yi{0},\,\yi{1},\,\dots,\,\yi{m}$ of elements of $W$ 
inductively by the following formula (see \cite[\S4.2]{NS-dp}): 
%
%%%%%%%%%%%%%
%%% eq:yi %%%
%%%%%%%%%%%%%
%
\begin{equation} \label{eq:yi}
\yi{m}=e, \qquad 
\yi{l-1}=
 \begin{cases}
 \yi{l} & \text{if $l$ appears in $\min S(xw_{0},\,\bi)$}, \\[1.5mm]
 s_{\bti{l}}\yi{l} & \text{otherwise}
 \end{cases}
\end{equation}
for $1 \le l \le m$, 
where we set $\bti{l}:=\wi{l-1} \cdot \alpha_{i_{l}}$ 
for $1 \le l \le m$, and denote 
by $s_{\beta} \in W$ the reflection 
with respect to a root $\beta$. 
%
%%%%%%%%%%%%%%
%%% rem:zi %%%
%%%%%%%%%%%%%%
%
\begin{rem} \label{rem:zi}
The element $\yi{l} \in W$ above does not depend 
on the dominant coweight $\lambda \in X_{*}(T) \subset \Fh_{\BR}$. 
\end{rem}
%
%%%%%%%%%%%%%%
%%% rem:vi %%%
%%%%%%%%%%%%%%
%
\begin{rem} \label{rem:vi}
Let $\bi=(i_{1},\,i_{2},\,\dots,\,i_{m}) \in R(w_{0})$. 
We define a sequence 
$\vi{0},\,\vi{1},\,\dots,\,\vi{m}$ of elements of $W$ 
inductively by the following formula: 
\begin{equation*}
\vi{m}=e, \qquad 
\vi{l-1}=
 \begin{cases}
 s_{i_{l}}\vi{l} & \text{if $l$ appears in $\min S(xw_{0},\,\bi)$}, \\[1.5mm]
 \vi{l} & \text{otherwise}
 \end{cases}
\end{equation*}
for $1 \le l \le m$; we see from the definition of 
the set $S(xw_{0},\,\bi)$ that 
$\ell(\vi{l-1})=\ell(\vi{l})+1$ 
if $l$ appears in $\min S(xw_{0},\,\bi)$. 
Then we know from \cite[Lemma~4.2.1]{NS-dp}
that $\yi{l}=\wi{l}\vi{l}w_{0}^{-1}$ for every $0 \le l \le m$. 
\end{rem}
%
%%%%%%%%%%%%%%%%%%%%
%%% thm:GGMS-Ext %%%
%%%%%%%%%%%%%%%%%%%%
%
\begin{thm} \label{thm:GGMS-Ext}
Keep the notation and assumptions above. 
Let $\mu_{\bullet}=(\mu_{w})_{w \in W}$ be the GGMS datum of 
the extremal MV polytope $P_{x \cdot \lambda}$, i.e., 
$P_{x \cdot \lambda}=P(\mu_{\bullet})$. 
Let $w \in W$ be such that $w=\wi{l}$ for some 
$\bi \in R(w_{0})$ and $0 \le l \le m$. Then, we have
$\mu_{w}=\mu_{\wi{l}}=\yi{l} \cdot \lambda$.
\end{thm}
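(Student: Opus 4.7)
The plan is to proceed by descending induction on $l$ from $l=m$ down to $l=0$, using the length formula \eqref{eq:length} to propagate the identification of $\mu_{\wi{l}}$ across adjacent vertices of the polytope $P_{x\cdot\lambda}=\Conv(W_{\le x}\cdot\lambda)$ furnished by Proposition~\ref{prop:ext}.

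For the base case $l=m$, we have $\wi{m}=w_0$ and $\yi{m}=e$, so the claim reduces to $\mu_{w_0}=\lambda$. This follows from \eqref{eq:poly}: the defining condition forces $P_{x\cdot\lambda}\subseteq \mu_{w_0}-\sum_{j\in I}\BR_{\ge 0}h_j$, and since $P_{x\cdot\lambda}\subseteq\Conv(W\cdot\lambda)$ already lies below $\lambda$ in this partial order while containing $\lambda$ (as $e\le x$), we obtain $\mu_{w_0}=\lambda$.

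For the inductive step, assume $\mu_{\wi{l}}=\yi{l}\cdot\lambda$; then \eqref{eq:length} with $w=\wi{l-1}$ and $i=i_l$ gives
\[
\mu_{\wi{l-1}}=\yi{l}\cdot\lambda-\Ni{l}\cdot\wi{l-1}h_{i_l}
\]
for some $\Ni{l}\in\BZ_{\ge 0}$, so the task is to identify $\Ni{l}$ according to the cases of the recursion \eqref{eq:yi}. If $l$ appears in $\min S(xw_0,\bi)$, then by Remark~\ref{rem:vi} we have $\vi{l-1}=s_{i_l}\vi{l}$ so that $\wi{l-1}\vi{l-1}=\wi{l}\vi{l}$, whence $\yi{l-1}=\yi{l}$, and the goal reduces to $\Ni{l}=0$. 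If $l$ does not appear, then $\vi{l-1}=\vi{l}$, giving $\yi{l-1}=s_{\bti{l}}\yi{l}$; applying the reflection $s_{\bti{l}}$ to $\yi{l}\cdot\lambda$ produces
\[
\yi{l-1}\cdot\lambda = \yi{l}\cdot\lambda - \Bpair{\wi{l-1}^{-1}\yi{l}\cdot\lambda}{\alpha_{i_l}}\cdot\wi{l-1}h_{i_l},
\]
so the goal becomes $\Ni{l}=\pair{\wi{l-1}^{-1}\yi{l}\cdot\lambda}{\alpha_{i_l}}$. In either case, the target value of $\Ni{l}$ admits the geometric interpretation as the length of the edge of $P_{x\cdot\lambda}$ issuing from the vertex $\yi{l}\cdot\lambda$ in the direction $-\wi{l-1}h_{i_l}$, which terminates at the farthest point of $W_{\le x}\cdot\lambda$ along that ray.

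The main obstacle is the Bruhat-combinatorial bookkeeping required to match the lexicographic minimality in the definition of $\min S(xw_0,\bi)$ against the polytopal edge structure. Concretely, one must establish that $\yi{l-1}\le x$ holds at every step (so that $\yi{l-1}\cdot\lambda$ is a genuine point of $W_{\le x}\cdot\lambda$), and that in the first case no element of $W_{\le x}\cdot\lambda$ sits strictly beyond $\yi{l}\cdot\lambda$ in the $-\wi{l-1}h_{i_l}$ direction, forcing $\Ni{l}=0$. These facts rely on the lifting property of the Bruhat order applied to reduced subwords in $S(xw_0,\bi)$; once they are secured, matching $\Ni{l}$ with the recursion value becomes a direct verification.
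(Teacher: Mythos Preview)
The paper does not prove Theorem~\ref{thm:GGMS-Ext} here; it is imported from \cite{NS-dp} (see the remark following Proposition~\ref{prop:ext}). So there is no in-paper argument to compare against, and your proposal must be judged on its own.

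What you have written is a strategy, not a proof. The descending induction along a fixed $\bi\in R(w_{0})$ is the natural framework, and your identification of the two target values for $\Ni{l}$ is correct. But you yourself flag the substantive step as ``the main obstacle'' and then do not carry it out. Concretely, you still need to establish: (i) that $\yi{l}\le x$ for every $l$ (so that $\yi{l}\cdot\lambda$ is actually a point of $W_{\le x}\cdot\lambda$); (ii) in the case $l\in\min S(xw_{0},\bi)$, that no $z\le x$ gives a point $z\cdot\lambda$ strictly beyond $\yi{l}\cdot\lambda$ along $-\wi{l-1}h_{i_{l}}$, forcing the edge to degenerate; and (iii) in the complementary case, that $\pair{\yi{l}\cdot\lambda}{\bti{l}}\ge 0$ and that $s_{\bti{l}}\yi{l}\cdot\lambda$ is genuinely the edge endpoint, not merely a point on the correct ray lying inside the polytope. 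The phrase ``once they are secured, matching $\Ni{l}$ with the recursion value becomes a direct verification'' is accurate, but it defers precisely the content of the theorem: items (i)--(iii) together are what make the lexicographic minimality of $\min S(xw_{0},\bi)$ interact with the polytope $\Conv(W_{\le x}\cdot\lambda)$, and none of them is routine.

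A further caution: your geometric interpretation of $\Ni{l}$ as ``the farthest point of $W_{\le x}\cdot\lambda$ along that ray'' presupposes that the edge of the polytope in direction $-\wi{l-1}h_{i_{l}}$ terminates at a point of the orbit $W_{\le x}\cdot\lambda$. This is true because every vertex of the convex hull lies in the generating set, but you should say so; otherwise the reader may wonder whether the endpoint could be a non-extremal boundary point. In short, the skeleton is right, but the Bruhat-order arguments you invoke by name are the proof, and they remain to be supplied.
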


The following results on extremal MV polytopes and extremal MV cycles 
play an important role in the proof of Theorem~\ref{thm:tensor}
given in \S\ref{subsec:prf-tensor}. 
%
%%%%%%%%%%%%%%%%
%%% lem:ext1 %%%
%%%%%%%%%%%%%%%%
%
\begin{lem} \label{lem:ext1}
Keep the notation and assumptions of 
Theorem~\ref{thm:GGMS-Ext}.
For each $w \in W$ and $j \in I$ with $w < ws_{j}$, 
we have either 
{\rm (a)} $\mu_{ws_{j}}=\mu_{w}$, or 
{\rm (b)} $\mu_{ws_{j}}=ws_{j}w^{-1} \cdot \mu_{w}$.
Moreover, in both of the cases {\rm (a)} and {\rm (b)}, 
we have $\pair{\mu_{ws_{j}}}{w \cdot \alpha_{j}} \ge 0$. 
\end{lem}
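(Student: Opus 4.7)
The plan is to fix a reduced expression $\bi = (i_1, \ldots, i_m) \in R(w_0)$ witnessing the covering $w < ws_j$, i.e., one for which $w = \wi{l}$ and $ws_j = \wi{l+1}$ (so $i_{l+1} = j$) for some $l$; such a $\bi$ exists because any reduced word for $ws_j$ extends to a reduced expression for $w_0$. Theorem~\ref{thm:GGMS-Ext} then gives $\mu_w = \yi{l} \cdot \lambda$ and $\mu_{ws_j} = \yi{l+1} \cdot \lambda$, and the recursion \eqref{eq:yi} determines $\yi{l}$ from $\yi{l+1}$. Since $\bti{l+1} = \wi{l} \cdot \alpha_{i_{l+1}} = w \cdot \alpha_j$, the two clauses of \eqref{eq:yi} correspond precisely to alternatives (a) and (b) of the lemma: if $l+1 \in \min S(xw_0, \bi)$, then $\yi{l} = \yi{l+1}$, whence $\mu_{ws_j} = \mu_w$; otherwise $\yi{l} = s_{w \cdot \alpha_j} \yi{l+1}$, whence $\mu_{ws_j} = s_{w \cdot \alpha_j} \cdot \mu_w = ws_j w^{-1} \cdot \mu_w$.

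For the nonnegativity of $\pair{\mu_{ws_j}}{w \cdot \alpha_j}$, case (b) is straightforward: the reflection formula gives $\mu_{ws_j} - \mu_w = -\pair{\mu_w}{w \cdot \alpha_j}\,(w \cdot h_j)$, which by the edge condition \eqref{eq:length} lies in $\BZ_{\ge 0}(w \cdot h_j)$. Hence $\pair{\mu_w}{w \cdot \alpha_j} \le 0$, and the identity $\pair{s_\beta v}{\beta} = -\pair{v}{\beta}$ with $\beta = w \cdot \alpha_j$ yields the claim.

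The main obstacle is case (a), where the degenerate edge $\mu_{ws_j} = \mu_w$ means \eqref{eq:length} alone provides no information. Here the plan is to expand $\yi{l} = \wi{l}\vi{l}w_0^{-1}$ using Remark~\ref{rem:vi}: in case (a) one has $\vi{l} = s_j \vi{l+1}$ with $\ell(\vi{l}) = \ell(\vi{l+1}) + 1$, the length growth being equivalent to the assertion that $(\vi{l+1})^{-1}(\alpha_j)$ is a positive root. Then $\mu_w = (ws_j)\vi{l+1}w_0^{-1} \cdot \lambda$, and pairing against $w \cdot \alpha_j$ collapses, via $W$-invariance of $\pair{\cdot}{\cdot}$ together with $s_j \cdot \alpha_j = -\alpha_j$, to $-\pair{w_0^{-1} \cdot \lambda}{(\vi{l+1})^{-1}\alpha_j}$; since $w_0^{-1} \cdot \lambda$ is antidominant (as $\lambda$ is dominant) and is paired with a positive root, this quantity is $\ge 0$, as required.
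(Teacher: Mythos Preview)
Your proof is correct and follows essentially the same approach as the paper's own proof. The only differences are cosmetic: you index the reduced word so that $w=\wi{l}$ and $ws_{j}=\wi{l+1}$, whereas the paper takes $w=\wi{l-1}$ and $ws_{j}=\wi{l}$; and in case~(a) you phrase the final inequality as pairing the antidominant weight $w_{0}^{-1}\cdot\lambda$ with a positive root, while the paper equivalently pairs $\lambda$ with the negative root $w_{0}(\vi{l})^{-1}\cdot\alpha_{i_{l}}$.
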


\begin{proof}
Take $\bi=(i_{1},\,i_{2},\,\dots,\,i_{m}) \in R(w_{0})$ such that 
$\wi{l-1}=w$ and $\wi{l}=ws_{j}$ for some $1 \le l \le m$; 
note that $i_{l}=j$, $\bti{l}=\wi{l-1} \cdot \alpha_{i_{l}}=
w \cdot \alpha_{j}$, and hence $s_{\bti{l}}=ws_{j}w^{-1}$. 
Since $\mu_{w}=\mu_{\wi{l-1}}=\yi{l-1} \cdot \lambda$ and 
$\mu_{ws_{j}}=\mu_{\wi{l}}=\yi{l} \cdot \lambda$ 
by Theorem~\ref{thm:GGMS-Ext}, and since 
$\yi{l-1}$ is equal to $\yi{l}$ or 
$s_{\bti{l}}\yi{l}=ws_{j}w^{-1}\yi{l}$ 
by definition, it follows immediately that 
either (a) $\mu_{ws_{j}}=\mu_{w}$ or 
(b) $\mu_{ws_{j}}=ws_{j}w^{-1} \cdot \mu_{w}$ holds. 

We will show that 
$\pair{\mu_{ws_{j}}}{w \cdot \alpha_{j}} \ge 0$. 
First, let us assume that 
$l$ does not appear in $\min S(xw_{0},\,\bi)$.
Then, we have $\yi{l-1}=s_{\bti{l}}\yi{l}$ by definition, 
and hence $\mu_{\wi{l-1}}=s_{\bti{l}} \cdot \mu_{\wi{l}}$ 
by Theorem~\ref{thm:GGMS-Ext}. Also, 
it follows from the length formula \eqref{eq:length} 
(or \eqref{eq:n}) that 
$\mu_{\wi{l}}-\mu_{\wi{l-1}} \in 
 \BZ_{\ge 0}(\wi{l-1} \cdot h_{i_{l}})=
 \BZ_{\ge 0}(\bti{l})^{\vee}$, where 
$(\bti{l})^{\vee}$ denotes the coroot corresponding to 
the root $\bti{l}$. 
Combining these, we obtain 
\begin{equation*}
\BZ_{\ge 0}(\bti{l})^{\vee} \ni 
\mu_{\wi{l}}-\mu_{\wi{l-1}}=
\mu_{\wi{l}}-s_{\bti{l}} \cdot \mu_{\wi{l}}=
\pair{\mu_{\wi{l}}}{\bti{l}}(\bti{l})^{\vee}, 
\end{equation*}
and hence $\pair{\mu_{\wi{l}}}{\bti{l}} \ge 0$. 
This implies $\pair{\mu_{ws_{j}}}{w \cdot \alpha_{j}} \ge 0$ 
since $\wi{l}=ws_{j}$ and $\bti{l}=w \cdot \alpha_{j}$. 

Next, let us assume that 
$l$ appears in $\min S(xw_{0},\,\bi)$. 
Because $\mu_{\wi{l}}=\yi{l} \cdot \lambda=
\wi{l}\vi{l}w_{0}^{-1} \cdot \lambda$ 
by Theorem~\ref{thm:GGMS-Ext} and Remark~\ref{rem:vi}, 
we see, by noting $\wi{l}=ws_{j}$ and $i_{l}=j$, that 
\begin{align*}
\pair{\mu_{ws_{j}}}{w \cdot \alpha_{j}} & =
\pair{\mu_{\wi{l}}}{w \cdot \alpha_{j}} =
\pair{\wi{l}\vi{l}w_{0}^{-1} \cdot \lambda}{w \cdot \alpha_{j}} \\
& =
 \pair{ws_{j}\vi{l}w_{0}^{-1} \cdot \lambda}{w \cdot \alpha_{j}}=
 -\pair{\vi{l}w_{0}^{-1} \cdot \lambda}{\alpha_{j}} \\
&= -\pair{\lambda}{ w_{0}(\vi{l})^{-1} \cdot \alpha_{j} }
 = -\pair{\lambda}{ w_{0}(\vi{l})^{-1} \cdot \alpha_{i_{l}} }.
\end{align*}
Also, since $l$ appears in $\min S(xw_{0},\,\bi)$ by assumption, 
we have 
$\vi{l-1}=s_{i_{l}}\vi{l}$ with $\ell(\vi{l-1})=\ell(\vi{l})+1$ 
(see Remark~\ref{rem:vi}). 
It follows from the exchange condition that 
$(\vi{l})^{-1} \cdot \alpha_{i_{l}}$ is a positive root, 
and hence $w_{0}(\vi{l})^{-1} \cdot \alpha_{i_{l}}$ 
is a negative root. Therefore, we conclude that 
\begin{equation*}
\pair{\mu_{ws_{j}}}{w \cdot \alpha_{j}}=
- \underbrace{\pair{\lambda}
 { w_{0}(\vi{l})^{-1} \cdot \alpha_{i_{l}} }}_{\le 0} \ge 0
\end{equation*}
since $\lambda \in X_{*}(T) \subset \Fh_{\BR}$ is 
a dominant coweight, This proves the lemma. 
\end{proof}

Let $G$ be a complex, connected, semisimple algebraic group 
with Lie algebra $\Fg$. 
Take $\lambda \in X_{*}(T)_{+}$ and $x \in W$ arbitrarily, and 
let $\mu_{\bullet}=(\mu_{w})_{w \in W}$ denote the GGMS datum of 
the extremal MV polytope $P_{x \cdot \lambda} \in \mv(\lambda)$ of 
weight $x \cdot \lambda$, i.e., 
$P_{x \cdot \lambda}=P(\mu_{\bullet})$; 
recall from Theorem~\ref{thm:GGMS-Ext} that 
$\mu_{w} \in W \cdot \lambda$ for all $w \in W$. 
Now, for each $w \in W$, we consider the irreducible 
algebraic variety 
%
%%%%%%%%%%%%%%
%%% eq:bbw %%%
%%%%%%%%%%%%%%
%
\begin{equation} \label{eq:bbw}
\bb^{w} := \ol{\Gr^{\lambda} \cap S_{\mu_{w}}^{w}},
\end{equation}
which is the $\dot{w}$-translate of 
the extremal MV cycle $\bb_{w^{-1} \cdot \mu_{w}}$ 
of weight $w^{-1} \cdot \mu_{w}$ 
since $\Gr^{w^{-1} \cdot \lambda}=\Gr^{\lambda}$ 
(see Remark~\ref{rem:extcyc}); 
note that $\bb^{e} = \bb_{x \cdot \lambda}$ 
since $\mu_{e}=x \cdot \lambda$. 

For each $j \in I$, 
we set $P_{j} := B \sqcup (B \dot{s_{j}} B)$, 
which is the minimal parabolic subgroup 
(containing $B$) of $G$ corresponding to $s_{j} \in W$. 
Also, let $P_{j} = L_{j} U_{j}$ be its Levi decomposition 
such that $T \subset L_{j}$. 
%
%%%%%%%%%%%%%%%%
%%% lem:tran %%%
%%%%%%%%%%%%%%%%
%
\begin{lem} \label{lem:tran}
Keep the notation above. 
For each $w \in W$ and $j \in I$ with $ws_{j} < w$, 
we have ${}^{w} L_{j}(\CO) \bb^{ws_{j}} \subset \bb^{w}$.
\end{lem}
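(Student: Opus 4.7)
The plan is to conjugate the entire statement by $\dot{w}^{-1}$, reducing it to a statement about the \emph{standard} Levi $L_j$ acting on the affine Grassmannian. Setting $\eta := w^{-1}\cdot\mu_w$ and using $\bb^w=\dot w\cdot \bb_\eta$ with $\bb_\eta=\overline{U(\CO)\cdot [t^\eta]}$ (Remark~\ref{rem:extcyc}), the inclusion to prove becomes $L_j(\CO)\cdot\dot w^{-1}\bb^{ws_j}\subset\bb_\eta$. Applying Lemma~\ref{lem:ext1} with $ws_j$ in place of $w$ (which is permissible since $ws_j<(ws_j)s_j=w$) yields both the key sign condition $\pair{\mu_w}{-w\cdot\alpha_j}\ge 0$, equivalently $\pair{\eta}{\alpha_j}\le 0$, together with the dichotomy: either (i) $\mu_{ws_j}=\mu_w$, or (ii) $\mu_{ws_j}=s_{w\cdot\alpha_j}\cdot\mu_w$. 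A direct computation of the translate $\dot w^{-1}\cdot\bb^{ws_j}$ gives $\overline{{}^{s_j}U(\CO)\cdot [t^\eta]}$ in case (i), and $\dot s_j\cdot\bb_\eta$ in case (ii).

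The core rank-one input is that under $\pair{\eta}{\alpha_j}\le 0$, the commutation $x_{\alpha_j}(c)\cdot t^{\eta}=t^{\eta}\cdot x_{\alpha_j}(t^{-\pair{\eta}{\alpha_j}}c)$ forces $U_{\alpha_j}(\CO)$ to fix $[t^\eta]$, while the $U_{-\alpha_j}(\CO)$-orbit of $[t^\eta]$ is open dense in the $L_j$-Cartan cell $\Gr_{L_j}^{\eta^+} = L_j(\CO)\cdot[t^\eta]$ (both irreducible of dimension $|\pair{\eta}{\alpha_j}|$, by the Mirkovi\'c--Vilonen dimension formula applied to $L_j$). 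Consequently $L_j(\CO)\cdot [t^\eta]\subset \overline{U_{-\alpha_j}(\CO)\cdot[t^\eta]}\subset\bb_\eta$. Combining this with the normality of $U_j$ in $P_j$, which gives $L_j(\CO)\cdot U(\CO) = L_j(\CO)\cdot U_j(\CO)\cdot U_{-\alpha_j}(\CO) = U_j(\CO)\cdot L_j(\CO)$, one obtains the central $L_j(\CO)$-stability
$L_j(\CO)\cdot\bb_\eta\subseteq \overline{U_j(\CO)\cdot L_j(\CO)\cdot[t^\eta]} \subseteq U_j(\CO)\cdot\bb_\eta \subseteq \bb_\eta$.

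To finish, in case (i) the identification ${}^{s_j}U=U_j\cdot U_{\alpha_j}$ combined with $U_{\alpha_j}(\CO)\cdot[t^\eta]=\{[t^\eta]\}$ gives $\overline{{}^{s_j}U(\CO)\cdot[t^\eta]}=\overline{U_j(\CO)\cdot[t^\eta]}\subset\bb_\eta$; in case (ii), $\dot s_j\in L_j(\CO)$ and so $\dot s_j\cdot \bb_\eta\subseteq\bb_\eta$ by the stability just established. In both cases, one further application of $L_j(\CO)$-stability then yields $L_j(\CO)\cdot \dot w^{-1}\bb^{ws_j}\subseteq L_j(\CO)\cdot\bb_\eta\subseteq\bb_\eta$, and conjugating back by $\dot w$ gives the desired ${}^w L_j(\CO)\cdot\bb^{ws_j}\subset\bb^w$.

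The main obstacle I anticipate is the sign-convention bookkeeping, since the paper's convention (roots in $B$ are negative) makes $w\cdot\alpha_j$ a negative root when $ws_j<w$; verifying the correct root-subgroup decompositions ${}^wU={}^wU_j\cdot U_{-w\cdot\alpha_j}$ and ${}^{ws_j}U={}^wU_j\cdot U_{w\cdot\alpha_j}$ (which become $U=U_j\cdot U_{-\alpha_j}$ and ${}^{s_j}U=U_j\cdot U_{\alpha_j}$ after translation by $\dot w^{-1}$) is what lets all the orbit identifications transport correctly through $\dot w$-conjugation. Once these are pinned down, the remaining steps are short loop-group computations in the rank-one subgroup $L_j$.
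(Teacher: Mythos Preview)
Your proof is correct and follows essentially the same approach as the paper's: both use Lemma~\ref{lem:ext1} for the dichotomy and the sign condition, the rank-one density $\ol{U_{-\alpha_j}(\CO)[t^{\eta}]}=\ol{L_j(\CO)[t^{\eta}]}$ (which is the paper's equation \eqref{eq:tran1} translated by $\dot w^{-1}$, derived from Example~\ref{ex:MVc} for $L_j$), and the Levi factorization ${}^{w}L_j\cdot{}^{ws_j}U={}^{w}U\cdot{}^{w}L_j$ (equivalently $L_j\cdot{}^{s_j}U=U\cdot L_j$ after your conjugation). The only organizational differences are that you conjugate by $\dot w^{-1}$ at the outset to work with the standard Levi, and that you isolate the $L_j(\CO)$-stability of $\bb_\eta$ as a separate statement before treating the two cases, whereas the paper keeps the ${}^w$-twisted notation throughout and unifies cases (a) and (b) earlier via ${}^{w}L_j(\CO)[t^{\mu_{ws_j}}]={}^{w}L_j(\CO)[t^{\mu_w}]$.
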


\begin{proof}
For simplicity of notation, we write 
$N_{+}$ for ${}^{w}(L_{j} \cap U)$; 
the root in $N_{+}$ is $-w \cdot \alpha_{j}$ 
by our convention. 
Because $\mu_{\bullet}=(\mu_{w})_{w \in W}$ is 
the GGMS datum of the extremal MV polytope $P_{x \cdot \lambda}$, 
it follows from Lemma~\ref{lem:ext1} that 
we have either (a) $\mu_{ws_{j}}=\mu_{w}$ or 
(b) $\mu_{ws_{j}}=ws_{j}w^{-1} \cdot \mu_{w}$, and that 
in both of the cases (a) and (b), we have
$\pair{\mu_{w}}{w \cdot \alpha_{j}} \le 0$. 
Consequently, by taking into account 
Lemma~\ref{lem:emb}, applied to ${}^{w}L_{j} \subset G$ 
and $\mu_{w} \in X_{*}(T)$, we deduce from Example~\ref{ex:MVc} 
that
%
%%%%%%%%%%%%%%%%
%%% eq:tran1 %%%
%%%%%%%%%%%%%%%%
%
\begin{equation} \label{eq:tran1}
\ol{N_{+}(\CO) [t^{\mu_{w}}]} = 
\ol{{}^{w} L_{j}(\CO) [t^{\mu_{w}}]}.
\end{equation}
Also, by Remark~\ref{rem:extcyc}, applied to 
the extremal MV cycle $\dot{w}^{-1} \cdot \bb^{w}$ of 
weight $w^{-1} \cdot \mu_{w}$, we obtain 
$\dot{w}^{-1} \cdot \bb^{w}=
\ol{U(\CO)[t^{w^{-1} \cdot \mu_{w}}]}$, 
and hence
%
%%%%%%%%%%%%%%%%
%%% eq:tran2 %%%
%%%%%%%%%%%%%%%%
%
\begin{equation} \label{eq:tran2}
\bb^{w}=\ol{{}^{w}U(\CO)[t^{\mu_{w}}]}.
\end{equation}
Similarly, we obtain
%
%%%%%%%%%%%%%%%%
%%% eq:tran3 %%%
%%%%%%%%%%%%%%%%
%
\begin{equation} \label{eq:tran3}
\bb^{ws_{j}}=\ol{{}^{ws_{j}}U(\CO)[t^{\mu_{ws_{j}}}]}.
\end{equation}
Here we note that 
${}^{w}L_{j} {}^{ws_{j}}U = {}^{w}U {}^{w}L_{j}$ 
since $L_{j}U=UL_{j}$ and $\dot{s}_{j} \in L_{j}$.
It follows that 
\begin{equation*}
{}^{w}L_{j}(\CO) {}^{ws_{j}}U(\CO) [t^{\mu_{ws_{j}}}]
= {}^{w}U(\CO) {}^{w}L_{j}(\CO)  [t^{\mu_{ws_{j}}}].
\end{equation*}
Because 
$[t^{\mu_{ws_{j}}}] = [t^{\mu_{w}}]$ in case (a), and 
$[t^{\mu_{ws_{j}}}] = \dot{w} \dot{s}_{j} \dot{w}^{-1} [t^{\mu_{w}}]$ 
in case (b) as above, we deduce that in both of the cases (a) and (b), 
\begin{equation*}
{}^{w}U(\CO) {}^{w}L_{j}(\CO)  [t^{\mu_{ws_{j}}}]=
{}^{w}U(\CO) {}^{w}L_{j}(\CO)  [t^{\mu_{w}}].
\end{equation*}
Thus, we get
\begin{equation*}
{}^{w}L_{j}(\CO) {}^{ws_{j}}U(\CO) [t^{\mu_{ws_{j}}}]=
{}^{w}U(\CO) {}^{w}L_{j}(\CO)  [t^{\mu_{w}}].
\end{equation*}
In addition, we have 
\begin{align*}
{}^{w}U(\CO) {}^{w}L_{j}(\CO)  [t^{\mu_{w}}]
& \subset {}^{w}U(\CO) \ol{{}^{w}L_{j}(\CO)  [t^{\mu_{w}}]}
  = {}^{w}U(\CO) \ol{N_{+}(\CO) [t^{\mu_{w}}]}
  \quad 
  \text{by \eqref{eq:tran1}} \\
& \subset {}^{w}U(\CO) \ol{{}^{w}U(\CO) [t^{\mu_{w}}]}
  \quad
  \text{since $N_{+} \subset {}^{w}U$ by definition} \\
& \subset \ol{ {}^{w}U(\CO){}^{w}U(\CO) [t^{\mu_{w}}]} \\
& = \ol{{}^{w}U(\CO) [t^{\mu_{w}}]}=\bb^{w}
  \quad \text{by \eqref{eq:tran2}}.
\end{align*}
Hence we obtain 
${}^{w}L_{j}(\CO) 
 {}^{ws_{j}}U(\CO) [t^{\mu_{ws_{j}}}] \subset \bb^{w}$. 
From this, we conclude, by using \eqref{eq:tran3}, that
\begin{align*}
{}^{w} L_{j}(\CO) \bb^{ws_{j}} & =
{}^{w}L_{j}(\CO) \ol{{}^{ws_{j}}U(\CO) [t^{\mu_{ws_{j}}}]} \\
& \subset 
\ol{{}^{w}L_{j}(\CO) {}^{ws_{j}}U(\CO) [t^{\mu_{ws_{j}}}]} \\
& \subset \bb^{w}.
\end{align*}
This proves the lemma.
\end{proof}

%=========================%
%     START SECTION 03    %
%=========================%
%
\section{$N$-multiple maps for MV polytopes and their applications.}
\label{sec:multi}

As in (the second paragraph of) \S\ref{subsec:notation}, 
we assume that $\Fg$ is a complex semisimple Lie algebra. 
Let $\lambda \in X_{\ast}(T) \subset \Fh_{\BR}$ be 
an arbitrary (but fixed) dominant coweight. 

%==============================%
%     START SUBSECTION 0301    %
%==============================%
%
\subsection{$N$-multiple maps for MV polytopes.}
\label{subsec:multiple}

Let $N \in \BZ_{\ge 1}$. 
For a collection $\mu_{\bullet}=(\mu_{w})_{w \in W}$ of 
elements of $\Fh_{\BR}$, we set
\begin{equation*}
N \cdot \mu_{\bullet}:=(N\mu_{w})_{w \in W}.
\end{equation*}
Also, for a subset $P \subset \Fh_{\BR}$, we set 
\begin{equation*}
N \cdot P:=\bigl\{Nv \mid v \in P\bigr\} \subset \Fh_{\BR}.
\end{equation*}
The next lemma follows immediately 
from the definitions. 
%
%%%%%%%%%%%%%%
%%% lem:N1 %%%
%%%%%%%%%%%%%%
%
\begin{lem} \label{lem:N1}
Let $N \in \BZ_{\ge 1}$ be a positive integer. 

{\rm (1)} If $\mu_{\bullet}=(\mu_{w})_{w \in W}$ 
is a GGMS datum, then $N \cdot \mu_{\bullet}$ is 
also a GGMS datum.

{\rm (2)} If $\mu_{\bullet}=(\mu_{w})_{w \in W}$ 
is an MV datum, then $N \cdot \mu_{\bullet}$ is 
also an MV datum.

{\rm (3)} Let $P=P(\mu_{\bullet}) \in \mv$ be an MV polytope 
with GGMS datum $\mu_{\bullet}$. Then, 
$N \cdot P$ is the MV polytope
with GGMS datum $N \cdot \mu_{\bullet}$, that is, 
$N \cdot P=P(N \cdot \mu_{\bullet})$. Moreover, 
if $P \in \mv(\lambda)$, 
then $N \cdot P \in \mv(N\lambda)$. 
\end{lem}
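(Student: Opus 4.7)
The plan is to handle the three parts in the order (1), (2), (3), since each builds on the previous, and the key observation throughout is that all the defining conditions (GGMS, length formula, MV relations, pseudo-Weyl polytope description) are homogeneous in a sense compatible with scaling by a positive integer $N$.

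For (1), I would check the GGMS condition directly from the definition. If $\mu_{\bullet}$ is GGMS, then $w^{-1} \cdot \mu_{w'} - w^{-1} \cdot \mu_{w} \in Q^{\vee}_{+}$, and scaling both terms by $N$ gives an element of $N \cdot Q^{\vee}_{+} \subset Q^{\vee}_{+}$ since $Q^{\vee}_{+}=\sum_{j \in I}\BZ_{\ge 0}h_j$ is closed under multiplication by $N \in \BZ_{\ge 1}$. Equivalently, one could use the characterization \eqref{eq:length}, from which it is immediate that the edge-length integers satisfy $n^{\bi}_{l}(N \cdot \mu_{\bullet}) = N \cdot n^{\bi}_{l}(\mu_{\bullet})$ for each reduced word $\bi \in R(w_{0})$ and each $1 \le l \le m$.

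For (2), I would use the length formula scaling just noted to verify Definition~\ref{dfn:MV}\,(1) and (2) for $N \cdot \mu_{\bullet}$. The 2-move relations, being equalities of the form $n^{\bi}_{l} = n^{\bj}_{l}$, are preserved under multiplication by $N$ without any further work. For the 3-move relations, the additive parts of the relations are trivially preserved, and for the $\min$-term one uses the elementary identity $\min(Na,\, Nc) = N \min(a,\,c)$ valid for $N \in \BZ_{\ge 1}$ and $a,\,c \in \BZ_{\ge 0}$. This is the only spot where one must pause, but it is immediate.

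For (3), I would argue from the explicit description \eqref{eq:poly} of a pseudo-Weyl polytope. For any $v \in \Fh_{\BR}$, we have $v \in N \cdot P(\mu_{\bullet})$ if and only if $v/N \in P(\mu_{\bullet})$, which unwinds to $w^{-1} \cdot (v/N) - w^{-1} \cdot \mu_{w} \in \sum_{j} \BR_{\ge 0} h_{j}$ for all $w \in W$; multiplying through by $N$ and using that $\sum_{j} \BR_{\ge 0} h_{j}$ is closed under positive scaling yields the equivalent condition $w^{-1} \cdot v - w^{-1} \cdot (N\mu_{w}) \in \sum_{j} \BR_{\ge 0} h_{j}$, i.e., $v \in P(N \cdot \mu_{\bullet})$. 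This proves $N \cdot P = P(N \cdot \mu_{\bullet})$, and combined with (2) shows $N \cdot P$ is an MV polytope. For the final assertion, if $P \in \mv(\lambda)$ then $\mu_{w_{0}} = \lambda$ forces $(N \cdot \mu_{\bullet})_{w_{0}} = N\lambda$, and $P \subset \Conv(W \cdot \lambda)$ gives $N \cdot P \subset \Conv(W \cdot N\lambda)$, so that $N \cdot P \in \mv(N\lambda)$ by the characterization of $\mv(N\lambda)$ in \S\ref{subsec:MV}.

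There is no serious obstacle to overcome here: the entire lemma reduces to checking that each of the (GGMS, MV, pseudo-Weyl) defining conditions is compatible with scaling the vertex data by a positive integer, and the only mildly nontrivial point is the behavior of the $\min$-term in the 3-move relation, which is handled by a one-line observation.
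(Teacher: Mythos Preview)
Your proposal is correct and matches the paper's approach: the paper states only that the lemma ``follows immediately from the definitions'' and gives no further argument, so your proof simply fills in the routine verifications the paper omits. One small addendum worth noting is that in the non-simply-laced case the MV datum conditions from \cite{BeZe} are again piecewise-linear expressions in the $n^{\bi}_{l}$ built from $+$, $-$, and $\min$, so the same homogeneity argument you give for the $3$-move covers them as well.
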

%
%%%%%%%%%%%%%%
%%% rem:N1 %%%
%%%%%%%%%%%%%%
%
\begin{rem} \label{rem:N1}
Let $N \in \BZ_{\ge 1}$ be a positive integer. 
If $P=P(\mu_{\bullet})$ is a pseudo-Weyl polytope 
with GGMS datum $\mu_{\bullet}$, then 
the set $N \cdot P$ is identical to the Minkowski sum 
$P+P+ \cdots +P$ ($N$ times). Indeed, we see that 
\begin{align*}
N \cdot P & = 
 P(N \cdot \mu_{\bullet}) \quad \text{by Lemma~\ref{lem:N1}\,(3)} \\
& = 
 P(\underbrace{\mu_{\bullet}+\mu_{\bullet}+ \cdots +\mu_{\bullet}}_{\text{$N$ times}})
 \quad \text{(see Remark~\ref{rem:GGMS-sum})} \\[3mm]
& 
 =\underbrace{P(\mu_{\bullet})+P(\mu_{\bullet})+ \cdots +P(\mu_{\bullet})}_{\text{$N$ times}}
 \quad \text{by Proposition~\ref{prop:Minkowski}} \\[3mm]
& = \underbrace{P+P+ \cdots +P}_{\text{$N$ times}}. 
\end{align*}
\end{rem}

By Lemma~\ref{lem:N1}\,(3), 
we obtain an injective map 
$S_{N}:\mv(\lambda) \hookrightarrow \mv(N\lambda)$ 
that sends $P \in \mv(\lambda)$ to $N \cdot P \in \mv(N\lambda)$; 
we call the map $S_{N}$ an $N$-multiple map. 
Note that $S_{N}(P_{\lambda})=P_{N\lambda}$ 
(see Remark~\ref{rem:ext}).

%%%%%%%%%%%%%%%
%%% prop:N2 %%%
%%%%%%%%%%%%%%%
%
\begin{prop} \label{prop:N2}
Let $N \in \BZ_{\ge 1}$. 
For $P \in \mv(\lambda)$, we have
\begin{align*}
& \wt (S_{N}(P))=N \wt (P), \\
& 
S_{N}(e_{j}P)=e_{j}^{N}(S_{N}(P)), \qquad 
S_{N}(f_{j}P)=f_{j}^{N}(S_{N}(P)) \qquad 
\text{\rm for $j \in I$}, \\
& 
\ve_{j}(S_{N}(P))=N \ve_{j}(P), \qquad 
\vp_{j}(S_{N}(P))=N \vp_{j}(P) \qquad 
\text{\rm for $j \in I$},
\end{align*}
where it is understood 
that $S_{N}(\bzero)=\bzero$. 
\end{prop}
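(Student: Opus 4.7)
The plan is to fix $P=P(\mu_\bullet)\in\mv(\lambda)$ with GGMS datum $\mu_\bullet=(\mu_w)_{w\in W}$; by Lemma~\ref{lem:N1}\,(3), the image $S_N(P)$ is the MV polytope with GGMS datum $N\cdot\mu_\bullet=(N\mu_w)_{w\in W}$. The weight identity $\wt(S_N(P))=N\wt(P)$ then follows at once from $\wt(P)=\mu_e$. For $\ve_j$ I would appeal to Remark~\ref{rem:ve}, which gives $\mu_{s_j}-\mu_e=\ve_j(P)\,h_j$; multiplying by $N$ yields $(N\cdot\mu)_{s_j}-(N\cdot\mu)_e=N\ve_j(P)\,h_j$, so $\ve_j(S_N(P))=N\ve_j(P)$. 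The identity $\vp_j(S_N(P))=N\vp_j(P)$ is then a direct consequence of the strict-morphism relation \eqref{eq:ax} applied to $S_N(P)$, combined with the identities for $\wt$ and $\ve_j$ just obtained.

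For the Kashiwara operator identities $S_N(e_jP)=e_j^N(S_N(P))$ and $S_N(f_jP)=f_j^N(S_N(P))$, I would argue by a short induction. Assume first $e_jP\ne\bzero$, so that $\ve_j(P)\ge 1$ and hence $\ve_j(S_N(P))\ge N$ by the previous step; then $e_j^k(S_N(P))\ne\bzero$ for all $0\le k\le N$. Using the definition of $e_j$ on MV data recalled at the start of \S\ref{subsec:cry-MV}, a straightforward induction on $k$ shows that the GGMS datum of $e_j^k(S_N(P))$ agrees with $N\mu_w$ at every $w$ satisfying $s_jw<w$ and equals $N\mu_e+kh_j$ at $e$. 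At $k=N$ this GGMS datum coincides at these positions with $N\cdot(e_j\mu_\bullet)$, which is the GGMS datum of $S_N(e_jP)=P(N\cdot(e_j\mu_\bullet))$; the uniqueness part of Kamnitzer's characterization (\cite[Theorem~3.5]{Kam2}) of $e_j\mu_\bullet$ from its values at $e$ and at $w$ with $s_jw<w$ then forces $e_j^N(S_N(P))=S_N(e_jP)$. The degenerate case $\ve_j(P)=0$ gives both sides equal to $\bzero$.

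The $f_j$-identity is handled analogously, with the extra remark that the polytopal containment $P(f_j\mu_\bullet)\subset\Conv(W\cdot\lambda)$ implies $N\cdot P(f_j\mu_\bullet)\subset\Conv(W\cdot N\lambda)$, ensuring that no intermediate polytope $f_j^k(S_N(P))$ is killed by the boundary condition defining $f_j$ on $\mv(N\lambda)$. The main (minor) obstacle is the bookkeeping in these inductions, but both the non-vanishing of the intermediate steps and the correct form of their GGMS vertices at $e$ and at $w$ with $s_jw<w$ are guaranteed by the pre-computed identities $\ve_j(S_N(P))=N\ve_j(P)$ and $\vp_j(S_N(P))=N\vp_j(P)$, so the remaining argument is entirely formal.
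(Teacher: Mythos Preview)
Your proposal is correct and follows essentially the same approach as the paper: the paper also reads off $\wt$ from $\mu_e$, uses Remark~\ref{rem:ve} for $\ve_j$, derives $\vp_j$ from \eqref{eq:ax}, and then compares the GGMS datum of $e_j^N(S_N(P))$ with that of $S_N(e_jP)$ at $e$ and at $w$ with $s_jw<w$, invoking the uniqueness of the MV datum with those prescribed values. Your explicit induction on $k$ simply unpacks what the paper states in one sentence, and your remark on non-vanishing via $\vp_j(S_N(P))=N\vp_j(P)$ matches how the paper implicitly handles the $f_j$ case.
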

\begin{proof}
Let $\mu_{\bullet}=(\mu_{w})_{w \in W}$ be 
the GGMS datum of $P \in \mv(\lambda)$. 
It follows from the definition of $\wt$ and Lemma~\ref{lem:N1}\,(3) 
that $\wt(P)=\mu_{e}$ and $\wt(S_{N}(P))=\wt(N \cdot P)=N\mu_{e}$. 
Hence we have $\wt (S_{N}(P))=N \wt (P)$. 

Next, let us show that 
$\ve_{j}(S_{N}(P))=N \ve_{j}(P)$ and 
$\vp_{j}(S_{N}(P))=N \vp_{j}(P)$ for $j \in I$. 
Let $j \in I$. By Remark~\ref{rem:ve}, we have
$\mu_{s_{j}}-\mu_{e}=\ve_{j}(P)h_{j}$. Also, 
since $S_{N}(P)=N \cdot P=P(N \cdot \mu_{\bullet})$ 
by Lemma~\ref{lem:N1}\,(3), 
it follows from Remark~\ref{rem:ve} that
$N\mu_{s_{j}}-N\mu_{e}=
\ve_{j}(N \cdot P)h_{j}=
\ve_{j}(S_{N}(P))h_{j}$. 
Combining these equations, we have
\begin{equation*}
\ve_{j}(S_{N}(P))h_{j}=
N\mu_{s_{j}}-N\mu_{e}=
N(\mu_{s_{j}}-\mu_{e})=
N\ve_{j}(P)h_{j}, 
\end{equation*}
which implies that 
$\ve_{j}(S_{N}(P))=N \ve_{j}(P)$. 
In addition, we have 
\begin{align*}
\vp_{j}(S_{N}(P)) 
& =
 \pair{\wt (S_{N}(P))}{\alpha_{j}}+\ve_{j}(S_{N}(P))
 \qquad \text{by \eqref{eq:ax}} \\
& =\pair{N \wt (P)}{\alpha_{j}}+N\ve_{j}(P)
 \qquad \text{by the equations shown above} \\
& =N\bigl(\pair{\wt (P)}{\alpha_{j}}+\ve_{j}(P)\bigr) \\
& =N\vp_{j}(P)
 \qquad \text{by \eqref{eq:ax}}.
\end{align*}

Finally, let us show that 
$S_{N}(e_{j}P)=e_{j}^{N}(S_{N}(P))$ and 
$S_{N}(f_{j}P)=f_{j}^{N}(S_{N}(P))$ for $j \in I$; 
we give a proof only for the equality 
$S_{N}(e_{j}P)=e_{j}^{N}(S_{N}(P))$ 
since the equality $S_{N}(f_{j}P)=f_{j}^{N}(S_{N}(P))$ 
can be shown similarly. Let $j \in I$. 
First observe that 
\begin{align*}
S_{N}(e_{j} P)=\bzero
 & \quad \Longleftrightarrow \quad 
   e_{j} P=\bzero
 \quad \text{by Remark~\ref{rem:ve}} 
 \\
 & \quad \Longleftrightarrow \quad 
   \ve_{j}(P)=0
 \quad \text{by the definition of $\ve_{j}(P)$} \\
 & \quad \Longleftrightarrow \quad 
   e_{j}^{N}(S_{N}(P))=\bzero
 \quad \text{since $\ve_{j}(S_{N}(P))=N \ve_{j}(P)$}.
\end{align*}
Now, assume that $S_{N}(e_{j} P) \ne \bzero$, 
or equivalently, $e_{j} P \ne \bzero$. Recall 
from the definition of the raising Kashiwara operator 
$e_{j}$ that the GGMS datum of 
the MV polytope $e_{j} P \in \mv(\lambda)$ is equal to 
$e_{j}\mu_{\bullet}=(\mu_{w}')_{w \in W}$, 
which is the unique MV datum 
such that $\mu_{e}'=\mu_{e}+h_{j}$ and 
$\mu_{w}'=\mu_{w}$ for all $w \in W$ with $s_{j}w < w$. 
Hence we see from Lemma~\ref{lem:N1}\,(3) that 
the GGMS datum of the MV polytope 
$S_{N}(e_{j}P)=N \cdot (e_{j} P) \in \mv(N\lambda)$ is equal to 
the unique MV datum $\mu_{\bullet}''=(\mu_{w}'')_{w \in W}$ 
such that $\mu_{e}''=N\mu_{e}+Nh_{j}$ and 
$\mu_{w}''=N\mu_{w}$ for all $w \in W$ with $s_{j}w < w$. 
Because the GGMS datum of the MV polytope 
$S_{N}(P)=N \cdot P \in \mv(N\lambda)$ 
is equal to $N \cdot \mu_{\bullet}=(N\mu_{w})_{w \in W}$, 
we deduce from the definition of 
the raising Kashiwara operator $e_{j}$ that 
the GGMS datum $\mu_{\bullet}'''=(\mu_{w}''')_{w \in W}$ of 
$e_{j}^{N}(S_{N}(P))=e_{j}^{N}(N \cdot P) \in \mv(N\lambda)$ 
also satisfies the condition that 
$\mu_{e}'''=N\mu_{e}+Nh_{j}$ and 
$\mu_{w}'''=N\mu_{w}$ for all $w \in W$ with $s_{j}w < w$. 
Hence, by the uniqueness of such an MV datum, we obtain 
$\mu_{\bullet}''=\mu_{\bullet}'''$, which implies that 
$S_{N}(e_{j}P)=P(\mu_{\bullet}'')=
 P(\mu_{\bullet}''')=e_{j}^{N}(S_{N}(P))$.
This completes the proof of Proposition~\ref{prop:N2}.
\end{proof}

%==============================%
%     START SUBSECTION 0302    %
%==============================%
%
\subsection{Application of $N$-multiple maps.}
\label{subsec:app-multiple}

Let $\lambda \in X_{\ast}(T) \subset \Fh_{\BR}$ be 
an arbitrary (but fixed) dominant coweight 
as in \S\ref{subsec:multiple}. 
We denote by $W_{\lambda} \subset W$ 
the stabilizer of $\lambda$ in $W$, and 
by $W^{\lambda}_{\min} \subset W$ 
the set of minimal (length) coset representatives 
modulo the subgroup $W_{\lambda} \subset W$. 

For a positive integer $N \in \BZ_{\ge 1}$, 
let us denote by 
\begin{equation*}
K_{N}: \mv(\lambda) \hookrightarrow 
 \mv(\lambda)^{\otimes N}=
 \underbrace{
   \mv(\lambda) \otimes 
   \mv(\lambda) \otimes \cdots \otimes 
   \mv(\lambda)}_{\text{$N$ times}}
\end{equation*}
the composite of the $N$-multiple map 
$S_{N}:\mv(\lambda) \hookrightarrow \mv(N\lambda)$ with 
the canonical embedding 
$G_{N}:\mv(N\lambda) \hookrightarrow \mv(\lambda)^{\otimes N}$ 
of crystals that sends the highest weight element 
$P_{N\lambda}$ of $\mv(N\lambda)$ to 
the highest weight element 
$P_{\lambda}^{\otimes N}=
 P_{\lambda} \otimes P_{\lambda} \otimes \cdots \otimes P_{\lambda}$ 
($N$ times) of $\mv(\lambda)^{\otimes N}$.
%
%%%%%%%%%%%%%%%
%%% prop:N3 %%%
%%%%%%%%%%%%%%%
%
\begin{prop} \label{prop:N3}
Let $\lambda \in X_{\ast}(T) \subset \Fh_{\BR}$ be 
a dominant coweight, and 
let $x \in W^{\lambda}_{\min}$. 
If an MV polytope $P \in \mv(\lambda)$ lies 
in the Demazure crystal $\mv_{x}(\lambda)$, 
then there exist a positive integer $N \in \BZ_{\ge 1}$ 
and minimal coset representatives 
$x_{1},\,x_{2},\,\dots,\,x_{N} \in W^{\lambda}_{\min}$ 
such that 
%
%%%%%%%%%%%%%
%%% eq:N3 %%%
%%%%%%%%%%%%%
%
\begin{equation} \label{eq:N3}
\begin{cases}
x \ge x_{k} \quad 
 \text{\rm for all $1 \le k \le N$;} \\[3mm]
K_{N}(P)=
 P_{x_{1} \cdot \lambda} \otimes 
 P_{x_{2} \cdot \lambda} \otimes \cdots \otimes 
 P_{x_{N} \cdot \lambda}.
\end{cases}
\end{equation}
\end{prop}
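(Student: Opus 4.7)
My plan is to pass through Littelmann's Lakshmibai--Seshadri (LS) path model of $\CB(\lambda) \cong \mv(\lambda)$, as alluded to in the Remark following Theorem~\ref{ithm1}. Under this isomorphism, the MV polytope $P$ corresponds to an LS path $\pi$ of shape $\lambda$, specified by a sequence of directions $\nu_1 > \nu_2 > \cdots > \nu_s$ in $W \cdot \lambda$ together with rational turning times $0 = \sigma_0 < \sigma_1 < \cdots < \sigma_s = 1$. Writing $\nu_i = z_i \cdot \lambda$ for $z_i \in W^\lambda_{\min}$, the monotonicity of directions forces $z_1 > z_2 > \cdots > z_s$ in the Bruhat order. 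Two standard facts drive the argument: the path-model characterization of Demazure crystals, asserting that $P \in \mv_x(\lambda)$ if and only if the initial direction $\nu_1$ satisfies $z_1 \le x$; and the compatibility of LS-path concatenation with the crystal tensor product, which underlies the canonical embedding $G_N$.

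Given $P \in \mv_x(\lambda)$ with LS path $\pi$ as above, I choose any positive integer $N$ that clears the denominators of the $\sigma_i$, so that $N\sigma_i \in \BZ$ for all $i$. Then $S_N(P)$ corresponds to $N\pi$, which has integer turning times and decomposes into $N$ unit-length pieces; on the interval $[k,k+1]$, the $k$-th piece is the straight-line path in direction $\nu_{i(k)}$, where $i(k)$ is the unique index satisfying $N\sigma_{i(k)-1} \le k < N\sigma_{i(k)}$. Under the identification of path concatenation with crystal tensor product, $G_N$ sends this concatenated path to a tensor product of extremal MV polytopes:
\begin{equation*}
K_N(P) = G_N(S_N(P)) = P_{\nu_{i(0)} \cdot \lambda} \otimes P_{\nu_{i(1)} \cdot \lambda} \otimes \cdots \otimes P_{\nu_{i(N-1)} \cdot \lambda}.
\end{equation*}
Setting $x_{k+1} := z_{i(k)} \in W^\lambda_{\min}$, this is the asserted factorization; and the chain $z_i \le z_1 \le x$ for all $i$ gives $x_k \le x$ throughout, completing the proof.

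The main obstacle is mobilizing the LS-path machinery, in particular the identification of $\CB(\lambda)$ with LS paths, together with the tensor--concatenation correspondence for $G_N$ and the characterization of Demazure subcrystals inside the path model. In principle one could attempt a direct induction on $\ell(x)$ using the compatibility $K_N(f_j^k P) = f_j^{Nk} K_N(P)$, which follows from Proposition~\ref{prop:N2} together with the fact that $G_N$ is a morphism of crystals: writing $P = f_j^k P'$ with $P' \in \mv_{s_j x}(\lambda)$ and applying the inductive hypothesis to $P'$, one is reduced to analyzing the action of $f_j^{Nk}$ on a tensor product of extremal MV polytopes via Kashiwara's signature rule. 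However, aligning the exponent $Nk$ with a partial sum of the $\varphi_j$-values of the factors, so as to ensure that the result is again a pure tensor of extremals, cannot be arranged simply by scaling the $N_0$ furnished by the inductive hypothesis, and one is eventually led back to the LS-path picture to produce the correct $N$. In either route, the Bruhat inequality $x_k \le x$ ultimately rests on the standard lifting property for the Bruhat order.
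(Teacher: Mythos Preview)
Your proof is correct and shares its backbone with the paper's: both pass through the map $K_N = G_N \circ S_N$, and the tensor decomposition of $K_N(P)$ into extremals that you extract directly from the LS-path model is precisely the content of \cite[Proposition~8.3.2]{Kasb}, which the paper cites as a black box (the paper's own Remark~\ref{rem:MV-LS} makes the same LS-path interpretation explicit). The one genuine divergence is in the Bruhat bound $x_k \le x$. You appeal to Littelmann's path-model characterization of Demazure crystals (initial direction $z_1 \le x$) together with the built-in chain $z_1 > \cdots > z_s$. The paper instead stays entirely inside the crystal formalism: it writes $\Psi_\lambda^{-1}(P) = f_{j_1}^{c_1}\cdots f_{j_r}^{c_r} u_\lambda$ via \eqref{eq:dem-f}, uses Proposition~\ref{prop:N2} and the fact that $G_N$ is a crystal morphism to obtain $K_N(\Psi_\lambda^{-1}(P)) = f_{j_1}^{Nc_1}\cdots f_{j_r}^{Nc_r} u_\lambda^{\otimes N}$, and then applies the tensor product rule to exhibit each factor $u_{x_k \cdot \lambda}$ as a monomial $f_{j_1}^{b_{k,1}}\cdots f_{j_r}^{b_{k,r}} u_\lambda \in \CB_x(\lambda)$, whence $x_k \le x$ by \cite[Proposition~4.4]{Kas}. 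Your route is more geometric and arguably more transparent; the paper's avoids importing Littelmann's Demazure characterization as an extra ingredient. (One notational slip: since $\nu_{i(k)}$ already lies in $W\cdot\lambda$, your displayed formula should read $P_{\nu_{i(k)}}$ rather than $P_{\nu_{i(k)}\cdot\lambda}$.)
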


%%%%%%%%%%%%%%%%%
%%% rem:MV-LS %%%
%%%%%%%%%%%%%%%%%
%
\begin{rem} \label{rem:MV-LS}
Keep the notation and assumption above. 
A positive integer $N \in \BZ_{\ge 1}$ and 
minimal coset representatives 
$x_{1},\,x_{2},\,\dots,\,x_{N} \in W^{\lambda}_{\min}$ 
satisfying the conditions \eqref{eq:N3}
are, in a sense, determined by 
the ``turning points'' and ``directions'' of 
the Lakshmibai-Seshadri path of 
shape $\lambda$ that corresponds to 
the MV polytope $P \in \mv(\lambda)$ 
under the (inexplicit) bijection 
via the crystal basis $\CB(\lambda)$.

We will explain the remark above more precisely. 
Let $\BB(\lambda)$ denote 
the set of Lakshmibai-Seshadri (LS for short) paths
of shape $\lambda$, which is endowed with 
a crystal structure for $U_{q}(\Fg^{\vee})$ 
by the root operators $e_{j}$, $f_{j}$, $j \in I$ 
(see \cite{L-inv} and \cite{L-ann} for details). 
We know from \cite[Theorem~4.1]{Kassim} 
(see also \cite[Th\'eor\`eme~8.2.3]{Kasb}) and 
\cite[Corollary~6.4.27]{Jo} that 
$\BB(\lambda)$ is isomorphic to 
the crystal basis $\CB(\lambda)$ 
as a crystal for $U_{q}(\Fg^{\vee})$.
Thus, we have $\BB(\lambda) \cong 
\CB(\lambda) \cong \mv(\lambda)$ 
as crystals for $U_{q}(\Fg^{\vee})$. 

Now, take an element 
$P \in \mv_{x}(\lambda) \subset \mv(\lambda)$, 
and assume that a positive integer $N \in \BZ_{\ge 1}$ 
and elements $x_{1},\,x_{2},\,\dots,\,x_{N} 
\in W^{\lambda}_{\min}$ satisfy 
conditions \eqref{eq:N3}. Consider 
a piecewise linear, continuous map 
$\pi:[0,1] \rightarrow \Fh_{\BR}$ by:
\begin{equation*}
\pi(t)=
 \sum_{l=1}^{k-1} \frac{1}{N}\,x_{l} \cdot \lambda +
 \left(t-\frac{k-1}{N}\right)x_{k} \cdot \lambda
 \qquad
 \text{for $t \in 
 \left[\frac{k-1}{N},\,\frac{k}{N}\right]$, \ 
 $1 \le k \le N$};
\end{equation*}
note that for each $1 \le k \le N$, 
the vector $x_{k} \cdot \lambda \in W \cdot \lambda \subset \Fh_{\BR}$ 
gives the direction of $\pi$ on the interval 
$[(k-1)/N,\,k/N]$, and the point $t=k/N$ is 
a turning point of $\pi$ if 
$x_{k-1} \cdot \lambda \ne x_{k} \cdot \lambda$. 
Then, we deduce from the proof of 
\cite[Theorem~4.1]{Kassim} 
(or \cite[Th\'eor\`eme~8.2.3]{Kasb}), together with
the commutative diagram \eqref{CD:multi} below, that 
this map $\pi:[0,1] \rightarrow \Fh_{\BR}$ is 
precisely the LS path of shape $\lambda$ 
that corresponds to the $P \in \mv(\lambda)$ under 
the isomorphism $\BB(\lambda) \cong 
\CB(\lambda) \cong \mv(\lambda)$ of crystals. 
The argument above implies, in particular, that 
for a fixed positive integer $N \in \BZ_{\ge 1}$, 
the elements $x_{1},\,x_{2},\,\dots,\,x_{N} \in W^{\lambda}_{\min}$ 
are determined uniquely by the MV polytope $P$ via 
the corresponding LS path. 
Also note that by the definition of LS paths, 
if a positive integer $N \in \BZ_{\ge 1}$ 
and elements $x_{1},\,x_{2},\,\dots,\,x_{N} 
\in W^{\lambda}_{\min}$ satisfy 
conditions \eqref{eq:N3}, then 
we necessarily have 
$x_{1} \ge x_{2} \ge \cdots \ge x_{N}$. 
\end{rem}

\begin{proof}[Proof of Proposition~\ref{prop:N3}]
Let $N \in \BZ_{\ge 1}$. 
We know from \cite[Theorem~3.1]{Kassim} and 
\cite[Corollarie~8.1.5]{Kasb} that there exists 
an injective map $S_{N}:\CB(\lambda) \hookrightarrow \CB(N\lambda)$ 
which sends the highest weight element $u_{\lambda} \in \CB(\lambda)$ to 
the highest weight element $u_{N\lambda} \in \CB(N\lambda)$, and 
which has the same properties as the $N$-multiple map 
$S_{N}:\mv(\lambda) \hookrightarrow \mv(N\lambda)$ 
given in Proposition~\ref{prop:N2}.
Let us denote by 
$K_{N}: \CB(\lambda) \hookrightarrow 
 \CB(\lambda)^{\otimes N}$
the composite of 
$S_{N}:\CB(\lambda) \hookrightarrow \CB(N\lambda)$ 
with the canonical embedding 
$G_{N}:\CB(N\lambda) \hookrightarrow \CB(\lambda)^{\otimes N}$ 
of crystals that sends the highest weight element 
$u_{N\lambda} \in \CB(N\lambda)$ to 
the highest weight element 
$u_{\lambda}^{\otimes N} \in \CB(\lambda)^{\otimes N}$ 
(see \cite[p.~181]{Kassim} and \cite[\S8.3]{Kasb}). 
Then, it is easily shown that the following diagram commutes:
%
%%%%%%%%%%%%%%%%
%%% CD:multi %%%
%%%%%%%%%%%%%%%%
%
\begin{equation} \label{CD:multi}
\begin{CD}
\CB(\lambda) @>{K_{N}}>> \CB(\lambda)^{\otimes N} \\
@V{\Psi_{\lambda}}VV @VV{\Psi_{\lambda}^{\otimes N}}V  \\
\mv(\lambda) @>{K_{N}}>> \mv(\lambda)^{\otimes N}.
\end{CD}
\end{equation}
Indeed, take an element $b \in \CB(\lambda)$, and write it as: 
$b=f_{j_{1}}f_{j_{2}} \cdots f_{j_{r}}u_{\lambda}$ 
for some $j_{1},\,j_{2},\,\dots,\,j_{r} \in I$; 
for simplicity of notation, we set 
$f_{\ast}:=f_{j_{1}}f_{j_{2}} \cdots f_{j_{r}}$ and 
$f_{\ast}^{N}:=f_{j_{1}}^{N}f_{j_{2}}^{N} \cdots f_{j_{r}}^{N}$. 
We have 
\begin{align*}
K_{N}(\Psi_{\lambda}(b))
 & = K_{N}(\Psi_{\lambda}(f_{\ast}u_{\lambda})) 
   = K_{N}(f_{\ast} P_{\lambda}) 
   = G_{N}(S_{N}(f_{\ast} P_{\lambda})) \\
 & = G_{N}(f_{\ast}^{N} P_{N\lambda})
     \quad \text{by Proposition~\ref{prop:N2}} \\
 & = f_{\ast}^{N}P_{\lambda}^{\otimes N}.
\end{align*}
Similarly, we have 
\begin{align*}
\Psi_{\lambda}^{\otimes N}(K_{N}(b))
 & = \Psi_{\lambda}^{\otimes N}(K_{N}(f_{\ast}u_{\lambda}))
   = \Psi_{\lambda}^{\otimes N}\bigl(G_{N}(S_{N}(f_{\ast} u_{\lambda}))\bigr) \\
 & = \Psi_{\lambda}^{\otimes N}\bigl(G_{N}(f_{\ast}^{N} u_{N\lambda})\bigr)
   = \Psi_{\lambda}^{\otimes N}(f_{\ast}^{N} u_{\lambda}^{\otimes N})
   = f_{\ast}^{N} P_{\lambda}^{\otimes N}.
\end{align*}
Thus, we obtain $K_{N}(\Psi_{\lambda}(b))=
\Psi_{\lambda}^{\otimes N}(K_{N}(b))$ 
for all $b \in \CB(\lambda)$. 

Now, let $P \in \mv(\lambda)$. 
Applying \cite[Proposition~8.3.2]{Kasb} to 
$\Psi_{\lambda}^{-1}(P) \in \CB(\lambda)$, we see that 
if $N \in \BZ_{\ge 1}$ contains ``sufficiently many'' divisors, 
then 
%
%%%%%%%%%%%%%
%%% eq:c1 %%%
%%%%%%%%%%%%%
%
\begin{equation} \label{eq:c1}
K_{N}(\Psi_{\lambda}^{-1}(P))=
 u_{x_{1} \cdot \lambda} \otimes 
 u_{x_{2} \cdot \lambda} \otimes \cdots \otimes 
 u_{x_{N} \cdot \lambda}
\end{equation}
for some $x_{1},\,x_{2},\,\dots,\,x_{N} \in W^{\lambda}_{\min}$. 
Then, we have 
\begin{align*}
K_{N}(P) & = 
 \Psi_{\lambda}^{\otimes N}\bigl(K_{N}(\Psi_{\lambda}^{-1}(P))\bigr)
 \quad \text{by \eqref{CD:multi}} \\
& =
 \Psi_{\lambda}^{\otimes N}(u_{x_{1} \cdot \lambda} \otimes 
 u_{x_{2} \cdot \lambda} \otimes \cdots \otimes 
 u_{x_{N} \cdot \lambda}) \\
& = 
 P_{x_{1} \cdot \lambda} \otimes 
 P_{x_{2} \cdot \lambda} \otimes \cdots \otimes 
 P_{x_{N} \cdot \lambda}. 
\end{align*}
It remains to show that 
$x \ge x_{k}$ for every $1 \le k \le N$. 
In view of \cite[Proposition~4.4]{Kas} 
(see also \cite[\S9.1]{Kasb}), 
it suffices to show that 
$u_{x_{k} \cdot \lambda} \in \CB_{x}(\lambda)$ 
 for every $1 \le k \le N$. 
Let $x=s_{j_{1}}s_{j_{2}} \cdots s_{j_{r}}$ 
be a reduced expression of $x \in W$. 
We know from \cite[Proposition~9.1.3\,(2)]{Kasb} that 
%
%%%%%%%%%%%%%%%%
%%% eq:dem-f %%%
%%%%%%%%%%%%%%%%
%
\begin{equation} \label{eq:dem-f}
\CB_{x}(\lambda)=
 \bigl\{
   f_{j_{1}}^{c_{1}}
   f_{j_{2}}^{c_{2}} \cdots 
   f_{j_{r}}^{c_{r}}u_{\lambda} \mid 
 c_{1},\,c_{2},\,\dots,\,c_{r} \in \BZ_{\ge 0}
 \bigr\} \setminus \{0\}.
\end{equation}
Since $P \in \mv_{x}(\lambda)$ by our assumption, 
$\Psi_{\lambda}^{-1}(P)$ is contained in $\CB_{x}(\lambda)$, 
and hence $\Psi_{\lambda}^{-1}(P)$ can be written as:
\begin{equation*}
\Psi_{\lambda}^{-1}(P)=
   f_{j_{1}}^{c_{1}}
   f_{j_{2}}^{c_{2}} \cdots 
   f_{j_{r}}^{c_{r}}u_{\lambda}
\quad 
\text{for some 
$c_{1},\,c_{2},\,\dots,\,c_{r} \in \BZ_{\ge 0}$}.
\end{equation*}
Therefore, we have 
\begin{align*}
K_{N}(\Psi_{\lambda}^{-1}(P)) & = 
 K_{N}(f_{j_{1}}^{c_{1}}
   f_{j_{2}}^{c_{2}} \cdots 
   f_{j_{r}}^{c_{r}}u_{\lambda})=
 G_{N}(S_{N}(f_{j_{1}}^{c_{1}}
   f_{j_{2}}^{c_{2}} \cdots 
   f_{j_{r}}^{c_{r}}u_{\lambda})) \\
& = G_{N}(f_{j_{1}}^{Nc_{1}}
   f_{j_{2}}^{Nc_{2}} \cdots 
   f_{j_{r}}^{Nc_{r}}u_{N\lambda}) 
  =f_{j_{1}}^{Nc_{1}}
   f_{j_{2}}^{Nc_{2}} \cdots 
   f_{j_{r}}^{Nc_{r}}u_{\lambda}^{\otimes N}. 
\end{align*}
It follows from the tensor product rule for crystals that 
\begin{align*}
& f_{j_{1}}^{Nc_{1}}
  f_{j_{2}}^{Nc_{2}} \cdots 
  f_{j_{r}}^{Nc_{r}}u_{\lambda}^{\otimes N} = \\
& 
  \bigl(f_{j_{1}}^{b_{1,1}}
  f_{j_{2}}^{b_{1,2}} \cdots 
  f_{j_{r}}^{b_{1,r}}u_{\lambda}\bigr) \otimes
  \bigl(f_{j_{1}}^{b_{2,1}}
  f_{j_{2}}^{b_{2,2}} \cdots 
  f_{j_{r}}^{b_{2,r}}u_{\lambda}\bigr) \otimes \cdots \otimes
  \bigl(f_{j_{1}}^{b_{N,1}}
  f_{j_{2}}^{b_{N,2}} \cdots 
  f_{j_{r}}^{b_{N,r}}u_{\lambda}\bigr)
\end{align*}
for some $b_{k,t} \in \BZ_{\ge 0}$, 
$1 \le k \le N$, $1 \le t \le r$, 
with $\sum_{k=1}^{N}b_{k,t}=Nc_{t}$ 
for each $1 \le t \le r$.
Combining these equalities with \eqref{eq:c1}, 
we obtain
\begin{align*}
& u_{x_{1} \cdot \lambda} \otimes 
  u_{x_{2} \cdot \lambda} \otimes \cdots \otimes 
  u_{x_{N} \cdot \lambda}=K_{N}(\Psi_{\lambda}^{-1}(P))=\\
& 
  \bigl(f_{j_{1}}^{b_{1,1}}
  f_{j_{2}}^{b_{1,2}} \cdots 
  f_{j_{r}}^{b_{1,r}}u_{\lambda}\bigr) \otimes
  \bigl(f_{j_{1}}^{b_{2,1}}
  f_{j_{2}}^{b_{2,2}} \cdots 
  f_{j_{r}}^{b_{2,r}}u_{\lambda}\bigr) \otimes \cdots \otimes
  \bigl(f_{j_{1}}^{b_{N,1}}
  f_{j_{2}}^{b_{N,2}} \cdots 
  f_{j_{r}}^{b_{N,r}}u_{\lambda}\bigr),
\end{align*}
from which it follows that 
$u_{x_{k} \cdot \lambda}=
  f_{j_{1}}^{b_{k,1}}
  f_{j_{2}}^{b_{k,2}} \cdots 
  f_{j_{r}}^{b_{k,r}}u_{\lambda}$ 
for $1 \le k \le N$. 
This implies that 
$u_{x_{k} \cdot \lambda} \in \CB_{x}(\lambda)$ 
for each $1 \le k \le N$ since 
$f_{j_{1}}^{b_{k,1}}
 f_{j_{2}}^{b_{k,2}} \cdots 
 f_{j_{r}}^{b_{k,r}}u_{\lambda} \in \CB_{x}(\lambda)$ 
by \eqref{eq:dem-f}.
Thus, we have proved Proposition~\ref{prop:N3}. 
\end{proof}
%
%==============================%
%     START SUBSECTION 0303    %
%==============================%
%
\subsection{Main results.}
\label{subsec:main}

In this subsection, we prove the following theorem, 
by using a polytopal estimate 
(Theorem~\ref{thm:tensor} below) of tensor products of 
MV polytopes. We use the setting of \S\ref{subsec:app-multiple}.
%
%%%%%%%%%%%%%%%%
%%% thm:main %%%
%%%%%%%%%%%%%%%%
%
\begin{thm} \label{thm:main}
Let $\lambda \in X_{*}(T) \subset \Fh_{\BR}$ be a dominant coweight, 
and let $x \in W^{\lambda}_{\min}$. 
If a positive integer $N \in \BZ_{\ge 1}$ and 
minimal coset representatives 
$x_{1},\,x_{2},\,\dots,\,x_{N} \in W^{\lambda}_{\min}$ 
satisfy the condition \eqref{eq:N3} 
in Proposition~\ref{prop:N3}, then 
\begin{equation*}
N \cdot P \subseteq
P_{x_1 \cdot \lambda} + 
P_{x_2 \cdot \lambda} + \cdots + 
P_{x_N \cdot \lambda},
\end{equation*}
where 
$P_{x_1 \cdot \lambda} + P_{x_2 \cdot \lambda} + \cdots + 
 P_{x_N \cdot \lambda}$ is the Minkowski sum of 
the extremal MV polytopes $P_{x_1 \cdot \lambda}$, 
$P_{x_2 \cdot \lambda}$, $\dots$, $P_{x_N \cdot \lambda}$.
\end{thm}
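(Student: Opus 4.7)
The plan is to reduce Theorem~\ref{thm:main} to an iterated application of Theorem~\ref{thm:tensor}, by induction on $N$, exploiting the fact that the canonical embedding $G_{N}:\mv(N\lambda) \hookrightarrow \mv(\lambda)^{\otimes N}$ factors through $\mv(\lambda) \otimes \mv((N-1)\lambda)$. This way each ``leftmost'' tensor factor in the expression given by Proposition~\ref{prop:N3} is an extremal MV polytope, which is precisely the setting in which Theorem~\ref{thm:tensor} applies.

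More precisely, I would prove by induction on $N \in \BZ_{\ge 1}$ the following slight strengthening of the desired conclusion: for any $Q \in \mv(N\lambda)$ such that $G_{N}(Q) = P_{y_{1} \cdot \lambda} \otimes P_{y_{2} \cdot \lambda} \otimes \cdots \otimes P_{y_{N} \cdot \lambda}$ for some $y_{1},\dots,y_{N} \in W$, one has $Q \subseteq P_{y_{1} \cdot \lambda} + P_{y_{2} \cdot \lambda} + \cdots + P_{y_{N} \cdot \lambda}$. Taking $Q = S_{N}(P) = N \cdot P$ and $y_{k} = x_{k}$, and invoking Proposition~\ref{prop:N3}, then yields Theorem~\ref{thm:main}. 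The base case $N = 1$ is trivial, since then $Q = P_{y_{1} \cdot \lambda}$ already as elements of $\mv(\lambda)$.

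For the inductive step, I would invoke the associativity of the tensor product of crystals for $U_{q}(\Fg^{\vee})$ to factor $G_{N}$ as the composition
\[
\mv(N\lambda) \xrightarrow{\ \iota_{N\lambda}\ } \mv(\lambda) \otimes \mv((N-1)\lambda) \xrightarrow{\ \mathrm{id} \otimes G_{N-1}\ } \mv(\lambda) \otimes \mv(\lambda)^{\otimes (N-1)},
\]
where $\iota_{N\lambda}$ denotes the canonical embedding of crystals sending $P_{N\lambda}$ to $P_{\lambda} \otimes P_{(N-1)\lambda}$. Under this factorization, $\iota_{N\lambda}(Q) = P_{y_{1} \cdot \lambda} \otimes Q'$ for a unique $Q' \in \mv((N-1)\lambda)$ satisfying $G_{N-1}(Q') = P_{y_{2} \cdot \lambda} \otimes \cdots \otimes P_{y_{N} \cdot \lambda}$. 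Applying Theorem~\ref{thm:tensor} with $\lambda_{1} = (N-1)\lambda$, $\lambda_{2} = \lambda$, with this $\iota_{N\lambda}$ as the chosen embedding, and with the extremal left factor $P_{2} = P_{y_{1} \cdot \lambda}$, I obtain $Q \subseteq Q' + P_{y_{1} \cdot \lambda}$. The inductive hypothesis applied to $Q' \in \mv((N-1)\lambda)$ gives $Q' \subseteq P_{y_{2} \cdot \lambda} + \cdots + P_{y_{N} \cdot \lambda}$. Combining these inclusions via the obvious monotonicity of Minkowski sums ($A \subseteq B \Rightarrow A + C \subseteq B + C$) yields $Q \subseteq P_{y_{1} \cdot \lambda} + P_{y_{2} \cdot \lambda} + \cdots + P_{y_{N} \cdot \lambda}$, completing the induction.

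The main obstacle is of course Theorem~\ref{thm:tensor} itself, whose proof forms the geometric heart of the paper (\S\ref{sec:tensor}) and relies on the affine Grassmannian description of tensor products of highest weight crystals due to Braverman-Gaitsgory. Granting Theorem~\ref{thm:tensor}, the argument above is essentially formal; the only point that needs verification is the compatibility of canonical embeddings in the factorization of $G_{N}$, but this is built into the definition of $G_{N}$ as the iterated canonical crystal embedding and follows from the general associativity of tensor products in the category of integrable $U_{q}(\Fg^{\vee})$-crystals.
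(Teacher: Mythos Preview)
Your proposal is correct and follows essentially the same approach as the paper. The paper's proof simply states that one applies Theorem~\ref{thm:tensor} (or rather Corollary~\ref{cor:tensor}) ``successively'' to the expression $G_{N}(N\cdot P)=P_{x_{1}\cdot\lambda}\otimes\cdots\otimes P_{x_{N}\cdot\lambda}$; your induction on $N$ with the factorization of $G_{N}$ through $\mv(\lambda)\otimes\mv((N-1)\lambda)$ is precisely how this successive application is carried out, peeling off the extremal leftmost factor at each step.
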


\begin{proof}
By our assumption, we have 
\begin{equation*}
\begin{cases}
x \ge x_{k} \quad 
 \text{\rm for all $1 \le k \le N$;} \\[3mm]
K_{N}(P)=G_{N}(N \cdot P)=
 P_{x_{1} \cdot \lambda} \otimes 
 P_{x_{2} \cdot \lambda} \otimes \cdots \otimes 
 P_{x_{N} \cdot \lambda}.
\end{cases}
\end{equation*}
Here we recall from \S\ref{subsec:app-multiple} 
that $G_{N}:\mv(N\lambda) \hookrightarrow \mv(\lambda)^{\otimes N}$ 
denotes the canonical embedding of crystals that 
sends $P_{N\lambda} \in \mv(N\lambda)$ to 
$P_{\lambda}^{\otimes N} \in \mv(\lambda)^{\otimes N}$.
Therefore, by using Theorem~\ref{thm:tensor} 
(or rather, Corollary~\ref{cor:tensor}) successively, 
we can show that
\begin{equation*}
N \cdot P \subset 
 P_{x_{1} \cdot \lambda}+
 P_{x_{2} \cdot \lambda}+\cdots+
 P_{x_{N} \cdot \lambda}.
\end{equation*}
This completes the proof of Theorem~\ref{thm:main}.
\end{proof}

This theorem, together with Proposition~\ref{prop:N3}, yields 
Theorem~\ref{ithm1} in the Introduction. 
As an immediate consequence, 
we can provide an affirmative answer to 
a question posed in \cite[\S4.6]{NS-dp}.
%
%%%%%%%%%%%%%%%%
%%% cor:main %%%
%%%%%%%%%%%%%%%%
%
\begin{cor} \label{cor:main}
Let $\lambda \in X_{*}(T) \subset \Fh_{\BR}$ be a dominant coweight, 
and let $x \in W$. 
All the MV polytopes lying in the Demazure crystal 
$\mv_{x}(\lambda)$ are contained (as sets) in 
the extremal MV polytope $P_{x \cdot \lambda}=
 \Conv(W_{\le x} \cdot \lambda)$ of weight $x \cdot \lambda$. 
Namely, for all $P \in \mv_{x}(\lambda)$, there holds 
\begin{equation*}
P \subset P_{x \cdot \lambda}=
 \Conv(W_{\le x} \cdot \lambda).
\end{equation*}
\end{cor}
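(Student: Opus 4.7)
The plan is to deduce this directly from Theorem~\ref{thm:main} (equivalently Theorem~\ref{ithm1}), which is the deep result already in hand; the corollary will follow by a short comparison of Minkowski sums of extremal MV polytopes under the Bruhat order. The only preliminary point is to reduce from an arbitrary $x \in W$ to the case $x \in W^{\lambda}_{\min}$, so that the hypothesis of Proposition~\ref{prop:N3} is met.

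First, I would observe that if $x' \in W^{\lambda}_{\min}$ is the minimal coset representative of $x$ modulo $W_{\lambda}$, then $x' \cdot \lambda = x \cdot \lambda$, so by Remark~\ref{rem:demcos} we have $\CB_{x'}(\lambda) = \CB_{x}(\lambda)$ and hence $\mv_{x'}(\lambda) = \mv_{x}(\lambda)$. Moreover, the extremal MV polytope $P_{x \cdot \lambda}$ depends only on the weight $x \cdot \lambda$, so $P_{x \cdot \lambda} = P_{x' \cdot \lambda}$. Thus it suffices to prove the corollary under the assumption $x \in W^{\lambda}_{\min}$.

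Now let $P \in \mv_{x}(\lambda)$. By Proposition~\ref{prop:N3} and Theorem~\ref{thm:main}, there exist a positive integer $N \in \BZ_{\ge 1}$ and minimal coset representatives $x_{1}, x_{2}, \dots, x_{N} \in W^{\lambda}_{\min}$ with $x \ge x_{k}$ for each $1 \le k \le N$, such that
\begin{equation*}
N \cdot P \subseteq
P_{x_{1} \cdot \lambda} + P_{x_{2} \cdot \lambda} + \cdots + P_{x_{N} \cdot \lambda}.
\end{equation*}
For each $k$, the inclusion $x_{k} \le x$ gives $W_{\le x_{k}} \subset W_{\le x}$, and hence by Proposition~\ref{prop:ext} we have
\begin{equation*}
P_{x_{k} \cdot \lambda} = \Conv(W_{\le x_{k}} \cdot \lambda) \subset \Conv(W_{\le x} \cdot \lambda) = P_{x \cdot \lambda}.
\end{equation*}
Summing these $N$ inclusions gives $P_{x_{1} \cdot \lambda} + \cdots + P_{x_{N} \cdot \lambda} \subset \underbrace{P_{x \cdot \lambda} + \cdots + P_{x \cdot \lambda}}_{N \text{ times}}$, and by Remark~\ref{rem:N1} the latter Minkowski sum equals $N \cdot P_{x \cdot \lambda}$. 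Combining, we obtain $N \cdot P \subset N \cdot P_{x \cdot \lambda}$, which upon rescaling by $1/N$ yields $P \subset P_{x \cdot \lambda}$, as desired.

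There is no real obstacle here: Theorem~\ref{thm:main} and Proposition~\ref{prop:ext} do all the heavy lifting, and the corollary reduces to a purely set-theoretic manipulation of Minkowski sums combined with monotonicity of the extremal MV polytope in the Bruhat order. The hardest (and already completed) step was establishing Theorem~\ref{thm:main} via the geometric input from Theorem~\ref{thm:tensor}; what remains for the corollary is only the elementary aggregation described above.
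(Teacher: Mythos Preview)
Your proof is correct and follows essentially the same approach as the paper's own proof: reduce to $x \in W^{\lambda}_{\min}$ via Remark~\ref{rem:demcos}, apply Proposition~\ref{prop:N3} together with Theorem~\ref{thm:main}, then use the Bruhat-monotonicity of extremal MV polytopes (Proposition~\ref{prop:ext}) and Remark~\ref{rem:N1} to conclude.
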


\begin{rem}
(1) The assertion of Theorem~\ref{thm:main} 
is not obvious, as explained in 
\cite[Remark~4.6.2 and Example~4.6.3]{NS-dp}.

(2) The converse statement fails to hold; 
see \cite[Remark~4.6.1]{NS-dp}.
\end{rem}

\begin{proof}[Proof of Corollary~\ref{cor:main}]
We know from Remark~\ref{rem:demcos} that 
if $x,\,y \in W$ satisfies 
$x \cdot \lambda=y \cdot \lambda$, 
then $\mv_{x}(\lambda)=\mv_{y}(\lambda)$ and 
$P_{x \cdot \lambda}=P_{y \cdot \lambda}$. 
Hence we may assume that $x \in W^{\lambda}_{\min}$. 
Let us take an arbitrary $P \in \mv_{x}(\lambda)$. 
By Proposition~\ref{prop:N3}, 
there exist $N \in \BZ_{\ge 1}$ and 
$x_{1},\,x_{2},\dots,\,x_{N} \in W^{\lambda}_{\min}$ 
satisfying the condition \eqref{eq:N3}. 
Also, for each $1 \le k \le N$, 
it follows from Proposition~\ref{prop:ext} and 
the inequality $x \ge x_{k}$ that 
%
%%%%%%%%%%%%%%%%%%%%
%%% eq:ext-inc02 %%%
%%%%%%%%%%%%%%%%%%%%
%
\begin{equation} \label{eq:ext-inc02}
P_{x_{k} \cdot \lambda}=
 \Conv(W_{\le x_{k}} \cdot \lambda) \subset
 \Conv(W_{\le x} \cdot \lambda)=P_{x \cdot \lambda}. 
\end{equation}
Therefore, we have 
\begin{align*}
N \cdot P 
 & \subset  
   P_{x_{1} \cdot \lambda}+
   P_{x_{2} \cdot \lambda}+\cdots+
   P_{x_{N} \cdot \lambda} 
   \quad \text{by Theorem~\ref{thm:main}} \\
 & \subset 
   \underbrace{
    P_{x \cdot \lambda}+
    P_{x \cdot \lambda}+\cdots+
    P_{x \cdot \lambda}
   }_{\text{$N$ times}} 
   \quad \text{by \eqref{eq:ext-inc02}} \\[3mm]
 & =N \cdot P_{x \cdot \lambda}
   \quad \text{by Remark~\ref{rem:N1}}.
\end{align*}
Consequently, we obtain 
$N \cdot P \subset N \cdot P_{x \cdot \lambda}$, 
which implies that $P \subset P_{x \cdot \lambda}$. 
This proves Corollary~\ref{cor:main}. 
\end{proof}

%=========================%
%     START SECTION 04    %
%=========================%
%
\section{Polytopal estimate of tensor products of MV polytopes.}
\label{sec:tensor}

The aim of this section is to state and prove 
a polytopal estimate of tensor products of MV polytopes.
%
%==============================%
%     START SUBSECTION 0401    %
%==============================%
%
\subsection{Polytopal estimate.}
\label{subsec:tensor}

As in (the second paragraph of) \S\ref{subsec:notation}, 
we assume that $\Fg$ is a complex semisimple Lie algebra. 
Let $\lambda_{1},\,\lambda_{2} \in X_{\ast}(T) \subset \Fh_{\BR}$ 
be dominant coweights. 
Because $\mv(\lambda) \cong \CB(\lambda)$ as crystals 
for every dominant coweight $\lambda \in X_{\ast}(T) \subset \Fh_{\BR}$, 
the tensor product 
$\mv(\lambda_{2}) \otimes \mv(\lambda_{1})$ of 
the crystals $\mv(\lambda_{1})$ and $\mv(\lambda_{2})$ 
decomposes into a disjoint union of connected components 
as follows:
\begin{equation*}
\mv(\lambda_{2}) \otimes \mv(\lambda_{1}) \cong 
 \bigoplus_{
   \begin{subarray}{c}
   \lambda \in X_{\ast}(T) \\[0.5mm]
   \text{$\lambda$ : dominant}
   \end{subarray}
   } 
\mv(\lambda)^{\oplus m_{\lambda_{1},\lambda_{2}}^{\lambda}},
\end{equation*}
where $m_{\lambda_{1},\lambda_{2}}^{\lambda} \in \BZ_{\ge 0}$ denotes 
the multiplicity of $\mv(\lambda)$ in 
$\mv(\lambda_{2}) \otimes \mv(\lambda_{1})$. 
For each dominant coweight $\lambda \in X_{\ast}(T) \subset \Fh_{\BR}$ 
such that $m_{\lambda_{1},\lambda_{2}}^{\lambda} \ge 1$, 
we take (and fix) an arbitrary embedding 
$\iota_{\lambda}:\mv(\lambda) \hookrightarrow 
 \mv(\lambda_{2}) \otimes \mv(\lambda_{1})$ of crystals 
that maps $\mv(\lambda)$ onto a connected component of 
$\mv(\lambda_{2}) \otimes \mv(\lambda_{1})$, 
which is isomorphic to $\mv(\lambda)$ as a crystal. 
%
%%%%%%%%%%%%%%%%%%
%%% thm:tensor %%%
%%%%%%%%%%%%%%%%%%
%
\begin{thm} \label{thm:tensor}
Keep the notation above.
Let $P \in \mv(\lambda)$, and 
write $\iota_{\lambda}(P) \in 
\mv(\lambda_{2}) \otimes \mv(\lambda_{1})$ as\,{\rm:} 
$\iota_{\lambda}(P)=P_{2} \otimes P_{1}$ for some 
$P_{1} \in \mv(\lambda_{1})$ and 
$P_{2} \in \mv(\lambda_{2})$. 
We assume that the MV polytope 
$P_{2} \in \mv(\lambda_{2})$ is an extremal MV polytope 
$P_{x \cdot \lambda_{2}}$ for some $x \in W$. 
Then, we have
%
%%%%%%%%%%%%%%%%%%%%
%%% eq:Minkowski %%%
%%%%%%%%%%%%%%%%%%%%
%
\begin{equation} \label{eq:Minkowski}
P \subset P_{1} + P_{2},
\end{equation}
where $P_{1} + P_{2}$ is the Minkowski sum of 
the MV polytopes $P_{1} \in \mv(\lambda_{1})$ and 
$P_{2} \in \mv(\lambda_{2})$.
\end{thm}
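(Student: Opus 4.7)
The strategy is to convert the polytope-theoretic statement into a geometric one about MV cycles in the affine Grassmannian and then exploit the Braverman-Gaitsgory description of tensor products of crystals in terms of the convolution Grassmannian. Let $\bb := \Phi_{\lambda}(P) \in \CZ(\lambda)$ and $\bb_{i} := \Phi_{\lambda_{i}}(P_{i}) \in \CZ(\lambda_{i})$ for $i=1,2$; by Theorem~\ref{thm:Kam1} the polytopes $P$, $P_{1}$, $P_{2}$ are the moment map images of $\bb$, $\bb_{1}$, $\bb_{2}$, so it suffices to prove $P(\bb) \subset P(\bb_{1}) + P(\bb_{2})$. Using the identification (to be formalized in the appendix as Theorem~\ref{thm:BG}) of $\mv(\lambda_{2}) \otimes \mv(\lambda_{1})$ with certain MV cycles in the convolution Grassmannian $\Gr \twp \Gr = G(\CK) \times^{G(\CO)} \Gr$, the tensor factor $\iota_{\lambda}(P) = P_{2} \otimes P_{1}$ corresponds to an irreducible component $\CX$ of a fiber product built from $\bb_{2}$ and $\bb_{1}$, and the MV cycle $\bb$ arises as $\mul(\CX)$ under the multiplication map $\mul:\Gr \twp \Gr \to \Gr$.

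The second step is a general ``moment-map Minkowski'' principle: for any $T$-invariant closed irreducible subvariety $\CY \subset \Gr \twp \Gr$, the moment map image of $\mul(\CY)$ is contained in the Minkowski sum of the moment images of its two projections onto $\Gr$. This follows from the observation that the $T$-fixed points of $\Gr$ are precisely $\{[t^{\nu}]\}_{\nu \in X_{\ast}(T)}$, that $\mul([t^{\nu_{1}},t^{\nu_{2}}]) = [t^{\nu_{1}+\nu_{2}}]$, and that for $T$-invariant closed subvarieties the moment image is the convex hull of $T$-fixed-point images. Combined with the first step, this yields $P(\bb) \subset P(\pr_{1}(\CX)) + P(\pr_{2}(\CX))$ for suitable projections $\pr_{1},\pr_{2}$ to $\Gr$.

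It remains to check that $\pr_{1}(\CX)$ and $\pr_{2}(\CX)$ are contained in $\bb_{1}$ and $\bb_{2}$, respectively, which is where the extremality hypothesis $P_{2} = P_{x \cdot \lambda_{2}}$ is essential. By Remark~\ref{rem:extcyc} the cycle $\bb_{2}$ is (a translate of) the closure of a $U(\CO)$-orbit, and Lemma~\ref{lem:tran} provides additional $L_{j}(\CO)$-invariance for its iterated translates; this rigidity of extremal cycles forces the Braverman-Gaitsgory component $\CX$ to be built as a single twisted product of (a translate of) $\bb_{2}$ with $\bb_{1}$, whose projections land exactly in $\bb_{2}$ and $\bb_{1}$. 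The main obstacle, I expect, will be identifying $\CX$ precisely: in general the fiber product $\bb_{2} \twp \bb_{1}$ can decompose into many irreducible pieces whose images under $\mul$ realize different summands of the tensor-product decomposition, and only in the extremal case does a canonical clean piece do the job. A secondary bookkeeping issue will be to reconcile the opposite tensor-product convention of Braverman-Gaitsgory with Kashiwara's convention used in this paper, which dictates on which factor the extremal cycle must sit and hence which projection must be constrained.
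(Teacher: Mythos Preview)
Your overall framework is right: translate to MV cycles via Theorem~\ref{thm:Kam1}, and use the Braverman--Gaitsgory description (Theorem~\ref{thm:BG}) of the tensor product inside the convolution Grassmannian $\Gr \twp \Gr$. But the ``moment-map Minkowski principle'' in your second step has a genuine gap. The twisted product $\Gr \twp \Gr = G(\CK) \times^{G(\CO)} \Gr$ does \emph{not} admit a second projection $\pr_{2}$ to $\Gr$: the second coordinate of $[(a,\,bG(\CO))]$ is only well-defined modulo the left $G(\CO)$-action. Even if you try to bypass this by working directly with $T$-fixed points $[(t^{\nu_{1}},\,[t^{\nu_{2}}])]$ of $\CX$ (which do make sense, and $\mul$ takes them to $[t^{\nu_{1}+\nu_{2}}]$), the hard part becomes showing that every such $\nu_{2}$ lies in $P(\bb_{2})$. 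Because $\CX$ is a \emph{closure} inside $\Gr \twp \Gr$, there is no a~priori reason the second coordinate of a boundary $T$-fixed point stays inside $\bb_{2}$; your sketch does not explain how extremality would force this, and Lemma~\ref{lem:tran} does not obviously give it.

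The paper's proof avoids the nonexistent second projection altogether. Rather than trying to factor $P(\bb)$ through a Minkowski sum of ``projected'' moment images, it proves, for each $w \in W$ separately, the inclusion $\Phi_{\lambda}(P) \subset \ol{S^{w}_{\mu^{(1)}_{w}+\mu^{(2)}_{w}}}$, which by Remark~\ref{rem:moment} and Proposition~\ref{prop:Minkowski} is exactly the $w$-th defining half-space of $P_{1}+P_{2}$. The key step is a Claim showing that the $\star$-product $\bb^{(1)} \stra{e} \bb^{(2)}$ (whose closure is dense in $\Phi_{\lambda}(P)$ by Theorem~\ref{thm:BG}) is contained in the closure of the $w$-twisted $\star$-product $\bb^{(1)} \stra{w} \bb^{(2),w}$, which automatically sits inside $S^{w}_{\mu^{(1)}_{w}+\mu^{(2)}_{w}}$. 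This Claim is proved by induction along a reduced expression $w=s_{i_{1}}\cdots s_{i_{\ell}}$, and it is precisely in the inductive step that Lemma~\ref{lem:tran} (the ${}^{w}L_{j}(\CO)$-stability of $\bb^{(2),ws_{j}}$ inside $\bb^{(2),w}$) is invoked; extremality of $P_{2}$ enters only there. So the role of extremality is to enable the chain of inclusions of $\star$-products indexed by $w$, not to constrain a second projection.
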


\begin{rem}
It should be mentioned that in the theorem above, 
the $\iota_{\lambda}(P)$ may lie in an arbitrary 
connected component of 
$\mv(\lambda_{2}) \otimes \mv(\lambda_{1})$ 
that is isomorphic to $\mv(\lambda)$ as a crystal. 
\end{rem}

The proof of this theorem will be given 
in \S\ref{subsec:prf-tensor} below; 
it seems likely that this theorem still holds 
without the assumption of extremality 
on the MV polytope $P_{2} \in \mv(\lambda_{2})$.

For dominant coweights 
$\lambda_{1},\,\lambda_{2} \in X_{\ast}(T) \subset \Fh_{\BR}$, 
there exists a unique embedding
\begin{equation*}
\iota_{\lambda_{1},\lambda_{2}}:
 \mv(\lambda_{1}+\lambda_{2}) \hookrightarrow 
 \mv(\lambda_{1}) \otimes \mv(\lambda_{2})
\end{equation*}
of crystals, which maps $\mv(\lambda_{1}+\lambda_{2})$ 
onto the unique connected component of 
$\mv(\lambda_{1}) \otimes \mv(\lambda_{2})$ 
(called the Cartan component) 
that is isomorphic to $\mv(\lambda_{1}+\lambda_{2})$ 
as a crystal; note that 
$\iota_{\lambda_{1},\lambda_{2}}(P_{\lambda_{1}+\lambda_{2}})=
 P_{\lambda_{1}} \otimes P_{\lambda_{2}}$. 
Applying Theorem~\ref{thm:tensor} to the case 
$\lambda=\lambda_{1}+\lambda_{2}$, we obtain 
the following corollary; notice that the ordering of the 
tensor factors $\mv(\lambda_{1})$, $\mv(\lambda_{2})$ is reversed. 
%
%%%%%%%%%%%%%%%%%%
%%% cor:tensor %%%
%%%%%%%%%%%%%%%%%%
%
\begin{cor} \label{cor:tensor}
Let $\lambda_{1},\,\lambda_{2} \in X_{\ast}(T) 
\subset \Fh_{\BR}$ be dominant coweights. 
Let $P \in \mv(\lambda_{1}+\lambda_{2})$, and 
write $\iota_{\lambda_{1},\lambda_{2}}(P) \in 
\mv(\lambda_{1}) \otimes \mv(\lambda_{2})$ as\,{\rm:} 
$\iota_{\lambda_{1},\lambda_{2}}(P)=P_{1} \otimes P_{2}$ for some 
$P_{1} \in \mv(\lambda_{1})$ and 
$P_{2} \in \mv(\lambda_{2})$. 
We assume that the MV polytope 
$P_{1} \in \mv(\lambda_{1})$ is an extremal MV polytope 
$P_{x \cdot \lambda_{1}}$ for some $x \in W$. 
Then, we have
%
%%%%%%%%%%%%%%%%%%%%%
%%% eq:Minkowski2 %%%
%%%%%%%%%%%%%%%%%%%%%
%
\begin{equation} \label{eq:Minkowski2}
P \subset P_{1} + P_{2}. 
\end{equation}
\end{cor}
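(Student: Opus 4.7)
The plan is to deduce Corollary~\ref{cor:tensor} as a direct relabeling of Theorem~\ref{thm:tensor}. The theorem is stated for the tensor product $\mv(\lambda_{2}) \otimes \mv(\lambda_{1})$, whose factors are in the reverse order from the Cartan embedding $\iota_{\lambda_{1},\lambda_{2}} : \mv(\lambda_{1}+\lambda_{2}) \hookrightarrow \mv(\lambda_{1}) \otimes \mv(\lambda_{2})$ used in the corollary. So I would apply Theorem~\ref{thm:tensor} after interchanging the roles of the two dominant coweights: the theorem's $\lambda_{1}$ taken to be the corollary's $\lambda_{2}$, and the theorem's $\lambda_{2}$ taken to be the corollary's $\lambda_{1}$. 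With this relabeling the theorem's ambient tensor product becomes $\mv(\lambda_{1}) \otimes \mv(\lambda_{2})$, matching the codomain of $\iota_{\lambda_{1},\lambda_{2}}$.

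Next, I specialize the theorem's dominant coweight $\lambda$ to $\lambda_{1}+\lambda_{2}$, for which $\mv(\lambda_{1}+\lambda_{2})$ appears (uniquely) as the Cartan component of $\mv(\lambda_{1}) \otimes \mv(\lambda_{2})$; thus $m_{\lambda_{1},\lambda_{2}}^{\lambda_{1}+\lambda_{2}} = 1$. The theorem is formulated relative to an arbitrary but fixed embedding of $\mv(\lambda)$ into the tensor product, so I am free to take that embedding to be $\iota_{\lambda_{1},\lambda_{2}}$ itself. Under this choice, writing $\iota_{\lambda_{1},\lambda_{2}}(P) = P_{1} \otimes P_{2}$ in the corollary's notation is, in the notation of Theorem~\ref{thm:tensor}, the assertion that the left tensor factor of $\iota_{\lambda}(P) = P_{2}^{\mathrm{thm}} \otimes P_{1}^{\mathrm{thm}}$ equals $P_{1}$ (lying in $\mv(\lambda_{1})$, which under the relabeling is the theorem's $\mv(\lambda_{2})$) and the right tensor factor equals $P_{2}$ (lying in $\mv(\lambda_{2})$, which is the theorem's $\mv(\lambda_{1})$).

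The corollary's hypothesis that $P_{1} = P_{x \cdot \lambda_{1}}$ is an extremal MV polytope then becomes exactly the theorem's hypothesis that the left tensor factor $P_{2}^{\mathrm{thm}}$ is extremal. Invoking Theorem~\ref{thm:tensor}, I conclude $P \subset P_{1}^{\mathrm{thm}} + P_{2}^{\mathrm{thm}} = P_{2} + P_{1} = P_{1} + P_{2}$, where the penultimate equality uses commutativity of the Minkowski sum. This yields the desired inclusion \eqref{eq:Minkowski2}.

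There is essentially no obstacle here; the entire content of the corollary is carried by Theorem~\ref{thm:tensor}. The only point meriting a moment's attention is that the specific Cartan embedding $\iota_{\lambda_{1},\lambda_{2}}$ is a legitimate choice for the ``arbitrary but fixed'' embedding appearing in the theorem, which is immediate from the fact that $\mv(\lambda_{1}+\lambda_{2})$ does occur as a connected component of $\mv(\lambda_{1}) \otimes \mv(\lambda_{2})$ isomorphic to $\mv(\lambda_{1}+\lambda_{2})$ as a crystal.
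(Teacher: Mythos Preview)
Your proposal is correct and matches the paper's approach exactly: the paper states the corollary immediately after Theorem~\ref{thm:tensor} with only the remark ``Applying Theorem~\ref{thm:tensor} to the case $\lambda=\lambda_{1}+\lambda_{2}$, we obtain the following corollary; notice that the ordering of the tensor factors $\mv(\lambda_{1})$, $\mv(\lambda_{2})$ is reversed.'' You have simply spelled out this relabeling in detail.
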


The following is a particular case in which 
the equality holds in \eqref{eq:Minkowski2}. 
%
%%%%%%%%%%%%%%%%%%%%
%%% prop:Min-Ext %%%
%%%%%%%%%%%%%%%%%%%%
%
\begin{prop} \label{prop:Min-Ext}
Let $\lambda_{1},\,\lambda_{2} \in X_{\ast}(T) \subset \Fh_{\BR}$ 
be dominant coweights, and let $x \in W$. Then, 
$\iota_{\lambda_{1},\lambda_{2}}(P_{x \cdot (\lambda_{1}+\lambda_{2})})=
P_{x \cdot \lambda_{1}} \otimes P_{x \cdot \lambda_{2}}$, and 
$P_{x \cdot (\lambda_{1}+\lambda_{2})}=
 P_{x \cdot \lambda_{1}}+P_{x \cdot \lambda_{2}}$. 
\end{prop}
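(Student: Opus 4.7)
The plan is to prove the two assertions separately. The polytopal identity $P_{x\cdot(\lambda_{1}+\lambda_{2})} = P_{x\cdot\lambda_{1}} + P_{x\cdot\lambda_{2}}$ follows from a direct comparison of GGMS data, while the tensor-product identity $\iota_{\lambda_{1},\lambda_{2}}(P_{x\cdot(\lambda_{1}+\lambda_{2})}) = P_{x\cdot\lambda_{1}} \otimes P_{x\cdot\lambda_{2}}$ is obtained by induction on $\ell(x)$ using Kashiwara's tensor product rule.

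For the polytopal identity, fix any reduced word $\bi \in R(w_{0})$ and apply Theorem~\ref{thm:GGMS-Ext}: for $i=1,2$, the GGMS datum $\mu^{(i)}_{\bullet}$ of $P_{x\cdot\lambda_{i}}$ satisfies $\mu^{(i)}_{\wi{l}} = \yi{l}\cdot\lambda_{i}$ for all $0 \le l \le m$. The crucial point is Remark~\ref{rem:zi}, which says that the sequence $\yi{l} \in W$ depends only on $x$ and $\bi$, not on the chosen dominant coweight. Hence, if $\mu_{\bullet}$ denotes the GGMS datum of $P_{x\cdot(\lambda_{1}+\lambda_{2})}$, then
\begin{equation*}
\mu_{\wi{l}} = \yi{l}\cdot(\lambda_{1}+\lambda_{2}) = \yi{l}\cdot\lambda_{1} + \yi{l}\cdot\lambda_{2} = \mu^{(1)}_{\wi{l}} + \mu^{(2)}_{\wi{l}},
\end{equation*}
so that $\mu_{\bullet} = \mu^{(1)}_{\bullet} + \mu^{(2)}_{\bullet}$. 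Proposition~\ref{prop:Minkowski}, combined with the fact that a pseudo-Weyl polytope is determined by its GGMS datum, then yields the polytopal identity.

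For the tensor-product identity, induct on $\ell(x)$. The base case $x = e$ is the defining property of $\iota_{\lambda_{1},\lambda_{2}}$, which sends the highest weight element $P_{\lambda_{1}+\lambda_{2}}$ to the highest weight element $P_{\lambda_{1}} \otimes P_{\lambda_{2}}$ of the Cartan component. For the inductive step, take $j \in I$ with $s_{j}x > x$; then $x^{-1}\alpha_{j}$ is a positive root, so $a := \pair{x\cdot\lambda_{1}}{\alpha_{j}}$ and $b := \pair{x\cdot\lambda_{2}}{\alpha_{j}}$ are non-negative. By the standard relationship between extremal elements and Kashiwara operators, $P_{s_{j}x\cdot(\lambda_{1}+\lambda_{2})} = f_{j}^{a+b} P_{x\cdot(\lambda_{1}+\lambda_{2})}$. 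Since $\iota_{\lambda_{1},\lambda_{2}}$ is a morphism of crystals, applying it and invoking the inductive hypothesis reduces the goal to
\begin{equation*}
f_{j}^{a+b}(P_{x\cdot\lambda_{1}} \otimes P_{x\cdot\lambda_{2}}) = P_{s_{j}x\cdot\lambda_{1}} \otimes P_{s_{j}x\cdot\lambda_{2}}.
\end{equation*}

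The main obstacle is to verify this last identity via Kashiwara's tensor product rule. Using $\vp_{j}(P_{x\cdot\lambda_{1}}) = a$, $\ve_{j}(P_{x\cdot\lambda_{1}}) = 0$, $\vp_{j}(P_{x\cdot\lambda_{2}}) = b$, $\ve_{j}(P_{x\cdot\lambda_{2}}) = 0$ (which follow from extremality together with $s_{j}x > x$), a step-by-step application of the rule shows that the first $a$ of the $f_{j}$'s act on the left factor, converting it to $P_{s_{j}x\cdot\lambda_{1}}$ while leaving the right factor intact; then the inequality $\vp_{j}(P_{s_{j}x\cdot\lambda_{1}}) = 0 \le \ve_{j}$ of the right factor persists through the remaining $b$ applications, which act on the right factor and convert it to $P_{s_{j}x\cdot\lambda_{2}}$. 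This bookkeeping is elementary, but the matching of the total count $a+b$ with $\pair{x\cdot(\lambda_{1}+\lambda_{2})}{\alpha_{j}}$ and the precise switching point at which $f_{j}$ migrates from the left to the right factor must be tracked carefully.
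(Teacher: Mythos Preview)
Your proof is correct and follows essentially the same approach as the paper's: the polytopal identity is obtained by comparing GGMS data via Theorem~\ref{thm:GGMS-Ext} and Remark~\ref{rem:zi} together with Proposition~\ref{prop:Minkowski}, and the tensor-product identity is proved by induction on $\ell(x)$ using the behavior of extremal elements under Kashiwara operators and the tensor product rule. The only cosmetic differences are that the paper proves the two parts in the opposite order and phrases the inductive step as passing from $s_{j}x$ (with $s_{j}x<x$) to $x$, whereas you pass from $x$ to $s_{j}x$ (with $s_{j}x>x$); your explicit tracking of the switching point in the tensor product rule is a bit more detailed than the paper's appeal to \cite[Lemme~8.3.1]{Kasb}, but the content is the same.
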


\begin{proof}
First we show the equality 
$\iota_{\lambda_{1},\lambda_{2}}(P_{x \cdot (\lambda_{1}+\lambda_{2})})=
 P_{x \cdot \lambda_{1}} \otimes P_{x \cdot \lambda_{2}}$ 
(which may be well-known to experts) by induction on $\ell(x)$. 
If $x=e$, then we have 
$\iota_{\lambda_{1},\lambda_{2}}(P_{\lambda_{1}+\lambda_{2}})=
 P_{\lambda_{1}} \otimes P_{\lambda_{2}}$ as mentioned above. 
Assume now that $\ell(x) > 0$. We take $j \in I$ 
such that $\ell(s_{j}x) < \ell(x)$. Set
\begin{equation*}
k:=\pair{s_{j}x \cdot (\lambda_{1}+\lambda_{2})}{\alpha_{j}}, \quad
k_{1}:=\pair{s_{j}x \cdot \lambda_{1}}{\alpha_{j}}, \quad
k_{2}:=\pair{s_{j}x \cdot \lambda_{2}}{\alpha_{j}};
\end{equation*}
note that we have $k=k_{1}+k_{2}$, 
with $k_{1},\,k_{2},\,k \in \BZ_{\ge 0}$, 
since $\ell(s_{j}x) < \ell(x)$. We see from 
\cite[Lemme~8.3.1]{Kasb} that 
$f_{j}^{k}P_{s_{j}x \cdot (\lambda_{1}+\lambda_{2})}$
is equal to $P_{x \cdot (\lambda_{1}+\lambda_{2})}$. 
Hence we have 
\begin{align*}
\iota_{\lambda_{1},\lambda_{2}}
 (P_{x \cdot (\lambda_{1}+\lambda_{2})}) & 
 =\iota_{\lambda_{1},\lambda_{2}}
 (f_{j}^{k}P_{s_{j}x \cdot (\lambda_{1}+\lambda_{2})}) 
 =f_{j}^{k}\iota_{\lambda_{1},\lambda_{2}}
 (P_{s_{j}x \cdot (\lambda_{1}+\lambda_{2})}) \\
& = f_{j}^{k}(
  P_{s_{j}x \cdot \lambda_{1}} \otimes 
  P_{s_{j}x \cdot \lambda_{2}})
\quad \text{by the induction hypothesis}.
\end{align*}
Here, by the tensor product rule 
for crystals, 
\begin{equation*}
f_{j}^{k}(
  P_{s_{j}x \cdot \lambda_{1}} \otimes 
  P_{s_{j}x \cdot \lambda_{2}}
)
= (f_{j}^{l_1}P_{s_{j}x \cdot \lambda_{1}}) \otimes 
  (f_{j}^{l_2}P_{s_{j}x \cdot \lambda_{2}})
\end{equation*}
for some $l_{1},\,l_{2} \in \BZ_{\ge 0}$ with $k=l_{1}+l_{2}$.
It follows from \cite[Lemme~8.3.1]{Kasb} that 
$l_{1}=k_{1}$ and $l_{2}=k_{2}$. 
Therefore, we deduce that 
\begin{align*}
\iota_{\lambda_{1},\lambda_{2}}
 (P_{x \cdot (\lambda_{1}+\lambda_{2})})
& = 
f_{j}^{k}(
  P_{s_{j}x \cdot \lambda_{1}} \otimes 
  P_{s_{j}x \cdot \lambda_{2}})
= (f_{j}^{k_1}P_{s_{j}x \cdot \lambda_{1}}) \otimes 
  (f_{j}^{k_2}P_{s_{j}x \cdot \lambda_{2}}) \\
& = 
P_{x \cdot \lambda_{1}} \otimes P_{x \cdot \lambda_{2}}
\quad \text{by \cite[Lemme~8.3.1]{Kasb}}.
\end{align*}
This proves the first equality. 

Next we show the equality
$P_{x \cdot (\lambda_{1}+\lambda_{2})}=
 P_{x \cdot \lambda_{1}}+P_{x \cdot \lambda_{2}}$.
Let us denote by
\begin{equation*}
\mu_{\bullet}^{x \cdot (\lambda_{1}+\lambda_{2})}=
 (\mu_{w}^{x \cdot (\lambda_{1}+\lambda_{2})})_{w \in W}, \quad
\mu_{\bullet}^{x \cdot \lambda_{1}}=
 (\mu_{w}^{x \cdot \lambda_{1}})_{w \in W}, \quad \text{and} \quad
\mu_{\bullet}^{x \cdot \lambda_{2}}=
 (\mu_{w}^{x \cdot \lambda_{2}})_{w \in W}
\end{equation*}
the GGMS data of the extremal MV polytopes 
$P_{x \cdot (\lambda_{1}+\lambda_{2})} \in \mv(\lambda_{1}+\lambda_{2})$, 
$P_{x \cdot \lambda_{1}} \in \mv(\lambda_{1})$, and 
$P_{x \cdot \lambda_{2}} \in \mv(\lambda_{2})$, respectively.
We verify the equality 
%
%%%%%%%%%%%%%%
%%% eq:me1 %%%
%%%%%%%%%%%%%%
%
\begin{equation} \label{eq:me1}
\mu_{w}^{x \cdot (\lambda_{1}+\lambda_{2})} = 
\mu_{w}^{x \cdot \lambda_{1}}+\mu_{w}^{x \cdot \lambda_{2}}
\quad \text{for every $w \in W$}.
\end{equation}
Let $w \in W$, and 
take $\bi \in R(w_{0})$ such that 
$w=\wi{l}$ for some $0 \le l \le m$. 
Then it follows from 
Theorem~\ref{thm:GGMS-Ext} that 
\begin{align*}
& \mu_{w}^{x \cdot (\lambda_{1}+\lambda_{2})}=
  \mu_{\wi{l}}^{x \cdot (\lambda_{1}+\lambda_{2})}=
  \yi{l} \cdot (\lambda_{1}+\lambda_{2}), \quad \text{and} \\
& \mu_{w}^{x \cdot \lambda_{1}}=
  \mu_{\wi{l}}^{x \cdot \lambda_{1}}=
  \yi{l} \cdot \lambda_{1}, \qquad
  \mu_{w}^{x \cdot \lambda_{2}}=
  \mu_{\wi{l}}^{x \cdot \lambda_{2}}=
  \yi{l} \cdot \lambda_{2},
\end{align*}
where $\yi{l}$ is defined as \eqref{eq:yi}; 
recall from Remark~\ref{rem:zi} that 
$\yi{l} \in W$ does not depend 
on the dominant coweights $\lambda_{1}+\lambda_{2}$, 
$\lambda_{1}$, and $\lambda_{2}$. 
Therefore, we deduce that 
\begin{equation*}
\mu_{w}^{x \cdot (\lambda_{1}+\lambda_{2})} = 
\yi{l} \cdot (\lambda_{1}+\lambda_{2})=
\yi{l} \cdot \lambda_{1}+\yi{l} \cdot \lambda_{2}=
\mu_{w}^{x \cdot \lambda_{1}}+\mu_{w}^{x \cdot \lambda_{2}},
\end{equation*}
as desired. Hence it follows that 
%
%%%%%%%%%%%%%%%%%%%
%%% eq:GGMS-Ext %%%
%%%%%%%%%%%%%%%%%%%
%
\begin{equation} \label{eq:GGMS-Ext}
\mu_{\bullet}^{x \cdot (\lambda_{1}+\lambda_{2})}=
 (\mu_{w}^{x \cdot (\lambda_{1}+\lambda_{2})})_{w \in W}=
(\mu_{w}^{x \cdot \lambda_{1}}+\mu_{w}^{x \cdot \lambda_{2}})_{w \in W}=
\mu_{\bullet}^{x \cdot \lambda_{1}}+
\mu_{\bullet}^{x \cdot \lambda_{2}}.
\end{equation}
Consequently, we have 
\begin{align*}
P_{x \cdot \lambda_{1}}+
P_{x \cdot \lambda_{2}} & =
P(\mu_{\bullet}^{x \cdot \lambda_{1}})+
P(\mu_{\bullet}^{x \cdot \lambda_{2}}) = 
P(\mu_{\bullet}^{x \cdot \lambda_{1}}+
  \mu_{\bullet}^{x \cdot \lambda_{2}}) \qquad
\text{by Proposition~\ref{prop:Minkowski}} \\
& =
P(\mu_{\bullet}^{x \cdot (\lambda_{1}+\lambda_{2})}) \qquad
\text{by \eqref{eq:GGMS-Ext}} \\
& = P_{x \cdot (\lambda_{1}+\lambda_{2})}.
\end{align*}
This proves the second equality, 
thereby completes the proof of the proposition. 
\end{proof}

%==============================%
%     START SUBSECTION 0402    %
%==============================%
%
\subsection{Reformation of Braverman-Gaitsgory's result on tensor products.}
\label{subsec:BG}
In this subsection, we revisit results of Braverman-Gaitsgory on 
tensor products of highest weight crystals, and provide 
a reformation of it, which is needed in our proof of 
Theorem~\ref{thm:tensor} given in \S\ref{subsec:prf-tensor}. 

We now recall the construction of 
certain twisted product varieties. 
Let $G$ be a complex, connected, reductive algebraic group 
as in (the beginning of) \S\ref{subsec:geom}. 
The twisted product variety $\Gr \twp \Gr$ is defined to be 
the quotient space
\begin{equation*}
G(\CK) \times^{G(\CO)} \Gr = \bigl(G(\CK) \times \Gr\bigr)/\sim, 
\end{equation*}
where $\sim$ is an equivalence relation on 
$G(\CK) \times \Gr$ given by: 
$(a,\,bG (\CO)) \sim (ah^{-1},\,hb G(\CO))$ for 
$a,\,b \in G(\CK)$, and $h \in G (\CO)$; 
for $(a,\,bG(\CO)) \in G(\CK) \times \Gr$, we denote by 
$[(a,\,bG(\CO))]$ the equivalence class of $(a,\,bG(\CO))$. 
This variety can be thought of as a fibration over $\Gr$ 
(the first factor) with its typical fiber $\Gr$ (the second factor). 
Let $\pi_{1} : \Gr \twp \Gr \twoheadrightarrow \Gr$, 
$[(a,\,bG(\CO))] \mapsto a G(\CO)$, 
be the projection onto the first factor, and 
$\mul:\Gr \twp \Gr \rightarrow \Gr$, 
$[(a,\,bG(\CO))] \mapsto ab G(\CO)$, the multiplication map. 
If $\CX \subset \Gr$ is an algebraic subvariety and 
$\CY \subset \Gr$ is an algebraic subvariety that 
is stable under the left $G(\CO)$-action on $\Gr$, 
then we can form an algebraic subvariety
\begin{equation*}
\CX \twp \CY := \ti{\CX} \times^{G(\CO)} \CY \subset \Gr \twp \Gr,
\end{equation*}
where $\ti{\CX}:=\pi_{1}^{-1}(\CX)$ is the pullback of 
$\CX \subset \Gr=G(\CK)/G(\CO)$ to $G(\CK)$.
%
%%%%%%%%%%%%%%%%%%
%%% prop:mv-44 %%%
%%%%%%%%%%%%%%%%%%
%
\begin{prop}[{\cite{Lu-Ast}, \cite[Lemma~4.4]{MV2}}] \label{prop:mv-44}
The multiplication map 
$\mul:\Gr \twp \Gr \rightarrow \Gr$, when restricted to 
$\ol{\Gr^{\lambda_{1}}} \twp \ol{\Gr^{\lambda_{2}}}$ 
for $\lambda_{1},\,\lambda_{2} \in X_{*}(T)_{+}$, is 
projective, birational, and semi-small 
with respect to the stratification by 
$G(\CO)$-orbits. In particular, 
for $\lambda_{1},\,\lambda_{2} \in X_{*}(T)_{+}$ and 
$\lambda \in X_{*}(T)_{+}$, 
\begin{equation*}
\mul \left(\ol{\Gr^{\lambda_{1}}} \twp \ol{\Gr^{\lambda_{2}}} \right) = 
\ol{\Gr^{\lambda_{1} + \lambda_{2}}},
\end{equation*}
\begin{equation*}
\mul^{-1} (\Gr^{\lambda}) \cap 
\left(
 \ol{\Gr^{\lambda_{1}}} \twp 
 \ol{\Gr^{\lambda_{2}}}\right) \neq \emptyset
\quad \text{\rm if and only if} \quad
\lambda_{1} + \lambda_{2} \ge \lambda;
\end{equation*}
if $\lambda_{1} + \lambda_{2} \ge \lambda$, then 
\begin{equation*}
\dim \left( 
 \mul^{-1} (\Gr^{\lambda}) \cap 
 \left(\ol{\Gr^{\lambda_{1}}} \twp \ol{\Gr^{\lambda_{2}}}\right)
\right) 
\le \dim \Gr^{\lambda} + 
\pair{\lambda_{1}+\lambda_{2}-\lambda}{\rho}.
\end{equation*}
\end{prop}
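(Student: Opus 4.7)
The plan is to treat the four assertions (projectivity of $\mul$, image equal to $\ol{\Gr^{\lambda_{1}+\lambda_{2}}}$, birationality, and the semi-small dimension bound) in turn, with the Cartan decomposition \eqref{eq:Grlam}, the dimension formula $\dim \Gr^{\mu} = 2\pair{\mu}{\rho}$, and the Mirkovi\'c-Vilonen refinement $\dim(\Gr^{\mu} \cap S_{\nu}) = \pair{\mu-\nu}{\rho}$ serving as the principal quantitative inputs.

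Projectivity of the source is immediate, since $\ol{\Gr^{\lambda_{1}}} \twp \ol{\Gr^{\lambda_{2}}}$ is a Zariski-locally trivial $\ol{\Gr^{\lambda_{2}}}$-bundle over the projective variety $\ol{\Gr^{\lambda_{1}}}$ via $\pi_{1}$; projectivity of $\mul$ then follows because its image lies in a finite-type sub-ind-variety of $\Gr$. For the image computation, left $G(\CO)$-equivariance of $\mul$, together with closedness and the presence of $[t^{\lambda_{1}+\lambda_{2}}]$ in the image, forces $\mul\bigl(\ol{\Gr^{\lambda_{1}}} \twp \ol{\Gr^{\lambda_{2}}}\bigr) \supseteq \ol{\Gr^{\lambda_{1}+\lambda_{2}}}$, while the dimension comparison $\dim(\ol{\Gr^{\lambda_{1}}} \twp \ol{\Gr^{\lambda_{2}}}) = 2\pair{\lambda_{1}+\lambda_{2}}{\rho} = \dim \Gr^{\lambda_{1}+\lambda_{2}}$, combined with \eqref{eq:Grlam}, yields the reverse inclusion. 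The nonemptiness criterion is then a direct consequence. Birationality will be established by checking that the fiber of $\mul$ over the base point $[t^{\lambda_{1}+\lambda_{2}}]$ is a single reduced point -- a preimage $[a, bG(\CO)]$ must have $aG(\CO) \in \ol{\Gr^{\lambda_{1}}}$ and $a^{-1}t^{\lambda_{1}+\lambda_{2}}G(\CO) \in \ol{\Gr^{\lambda_{2}}}$, and an exact dimension/$T$-weight count pins $aG(\CO) = [t^{\lambda_{1}}]$ -- and then invoking upper semi-continuity of fiber dimension.

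The main obstacle is the semi-small estimate. My plan is to exploit the $U(\CK)$-equivariance of $\mul$: the elementary relation $t^{\nu_{1}} U(\CK) t^{-\nu_{1}} = U(\CK)$ shows $\mul\bigl(S_{\nu_{1}} \twp S_{\nu_{2}}\bigr) \subseteq S_{\nu_{1}+\nu_{2}}$, whence a matching of the Iwasawa decompositions on source and target yields
\begin{equation*}
\mul^{-1}(S_{\nu}) \cap \bigl(\ol{\Gr^{\lambda_{1}}} \twp \ol{\Gr^{\lambda_{2}}}\bigr) = \bigsqcup_{\nu_{1}+\nu_{2}=\nu} \bigl(\ol{\Gr^{\lambda_{1}}} \cap S_{\nu_{1}}\bigr) \twp \bigl(\ol{\Gr^{\lambda_{2}}} \cap S_{\nu_{2}}\bigr).
\end{equation*}
Each summand has dimension at most $\pair{\lambda_{1}-\nu_{1}}{\rho} + \pair{\lambda_{2}-\nu_{2}}{\rho} = \pair{\lambda_{1}+\lambda_{2}-\nu}{\rho}$ by the MV formula applied to each factor together with the Cartan stratification. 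Specializing $\nu = w_{0}\lambda$, for which the slice $\Gr^{\lambda} \cap S_{w_{0}\lambda}$ is Zariski-open and dense in $\Gr^{\lambda}$ of full dimension $\pair{\lambda - w_{0}\lambda}{\rho} = 2\pair{\lambda}{\rho}$, the $G(\CO)$-equivariance of $\mul$ gives
\begin{equation*}
2\pair{\lambda}{\rho} + \dim(\text{generic fiber}) = \dim \mul^{-1}\bigl(\Gr^{\lambda} \cap S_{w_{0}\lambda}\bigr) \leq \pair{\lambda_{1}+\lambda_{2}-w_{0}\lambda}{\rho} = \pair{\lambda+\lambda_{1}+\lambda_{2}}{\rho},
\end{equation*}
so the generic fiber of $\mul$ over $\Gr^{\lambda}$ has dimension at most $\pair{\lambda_{1}+\lambda_{2}-\lambda}{\rho}$, giving the desired bound. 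The subtlest piece of the argument is in fact the displayed cell-decomposition formula for $\mul^{-1}(S_{\nu})$: exhaustiveness and disjointness require carefully treating how representatives in $G(\CK)$ interact with the twisted-product equivalence relation, which is ultimately governed by the Iwasawa factorization and the torus conjugation rule above. Everything else in the semi-small bound is formal once this decomposition and the MV dimension formula are in hand.
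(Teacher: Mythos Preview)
The paper does not supply its own proof of this proposition: it is quoted verbatim from the literature (Lusztig's Ast\'erisque paper and \cite[Lemma~4.4]{MV2}), so there is no in-paper argument against which to compare. Your sketch is essentially the standard proof one finds in those references and in \cite{NP}, and the overall strategy is sound.

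A few remarks on points where your write-up is thin. For birationality, your phrase ``an exact dimension/$T$-weight count pins $aG(\CO)=[t^{\lambda_{1}}]$'' hides the clean step: work with the $w_{0}$-twisted semi-infinite cells $S^{w_{0}}_{\gamma}$ instead of $S_{\gamma}$. Then $\ol{\Gr^{\lambda_{i}}}\cap S^{w_{0}}_{\gamma_{i}}\neq\emptyset$ forces $\gamma_{i}\in\Omega(\lambda_{i})$, hence $\gamma_{i}\le\lambda_{i}$; summing, $\gamma_{1}+\gamma_{2}=\lambda_{1}+\lambda_{2}$ forces $\gamma_{i}=\lambda_{i}$, and $\ol{\Gr^{\lambda_{i}}}\cap S^{w_{0}}_{\lambda_{i}}=\{[t^{\lambda_{i}}]\}$. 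This gives the single reduced fiber cleanly. For the semi-small bound, your displayed decomposition
\[
\mul^{-1}(S_{\nu})\cap\bigl(\ol{\Gr^{\lambda_{1}}}\twp\ol{\Gr^{\lambda_{2}}}\bigr)
=\bigsqcup_{\nu_{1}+\nu_{2}=\nu}\bigl(\ol{\Gr^{\lambda_{1}}}\cap S_{\nu_{1}}\bigr)\twp\bigl(\ol{\Gr^{\lambda_{2}}}\cap S_{\nu_{2}}\bigr)
\]
is notationally abusive, since the second factor is not $G(\CO)$-stable and so the twisted product symbol $\twp$ is not literally defined; the correct formulation is over $U(\CO)$, exactly as in the paper's Lemma~\ref{lem:invS} (and the resulting dimension statement is the paper's Lemma~\ref{lem:dimint}). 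You flag this subtlety yourself, and once it is handled the rest of your argument---specializing to $\nu=w_{0}\lambda$ and using $G(\CO)$-transitivity on $\Gr^{\lambda}$ to pass from the generic fiber to every fiber---is correct and is precisely the route taken in \cite{MV2}.
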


Let us introduce another kind of twisted products. 
For each $\nu_{1},\,\nu_{2} \in X_{*}(T)$ and $w \in W$, 
we define $S^{w}_{\nu_{1},\,\nu_{2}}$ to be the quotient space
\begin{equation*}
{}^{w}U(\CK) t^{\nu_{1}} \times^{{}^{w}U(\CO)} {}^{w}U(\CK)[t^{\nu_{2}}]
=
\bigl(
 {}^{w}U(\CK) t^{\nu_{1}} \times {}^{w}U(\CK)[t^{\nu_{2}}]
\bigr)/\sim,
\end{equation*}
where $\sim$ is an equivalence relation on 
${}^{w}U(\CK) t^{\nu_{1}} \times {}^{w}U(\CK)[t^{\nu_{2}}]$ 
given by: 
$(a,\,bG (\CO)) \sim (au^{-1},\,ub G(\CO))$ for 
$a \in {}^{w}U(\CK) t^{\nu_{1}} \subset G(\CK)$, 
$b \in {}^{w}U(\CK)[t^{\nu_{2}}] \subset \Gr$, and 
$u \in {}^{w}U(\CO) \subset G(\CO)$. 
Since ${}^{w}U(\CO)=G(\CO) \cap {}^{w}U(\CK)$ and 
$t^{\nu} ({}^{w}U(\CK)) = ({}^{w}U(\CK)) t^{\nu}$ 
for $\nu \in X_{*}(T)$, we have a canonical embedding
%
%%%%%%%%%%%%%%%%
%%% eq:emb-s %%%
%%%%%%%%%%%%%%%%
%
\begin{equation} \label{eq:emb-s}
S^{w}_{\nu_{1},\,\nu_{2}} \hookrightarrow 
G(\CK) \times^{G(\CO)} \Gr=\Gr \twp \Gr.
\end{equation}
%
%%%%%%%%%%%%%%%%
%%% lem:invS %%%
%%%%%%%%%%%%%%%%
%
\begin{lem} \label{lem:invS}
For each $\nu \in X_{*}(T)$ and $w \in W$, we have
\begin{equation*}
\mul^{-1} (S^{w}_{\nu}) = 
 \bigsqcup_{
   \begin{subarray}{c}
   \nu_{1},\,\nu_{2} \in X_{*}(T) \\[1.5mm]
   \nu_{1} + \nu_{2} = \nu
   \end{subarray}
 } S^{w}_{\nu_{1},\,\nu_{2}}
\end{equation*}
under the canonical embedding \eqref{eq:emb-s}.
\end{lem}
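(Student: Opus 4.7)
The plan is to verify both inclusions, after which disjointness will follow from the Iwasawa decomposition in $\Gr$ applied to the first tensor factor.

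First I would check the inclusion $\bigsqcup_{\nu_1+\nu_2=\nu} S^w_{\nu_1,\nu_2} \subseteq \mul^{-1}(S^w_\nu)$. Take a point of $S^w_{\nu_1,\nu_2}$, represented by $[(u_1 t^{\nu_1},\, u_2[t^{\nu_2}])]$ with $u_1, u_2 \in {}^wU(\CK)$. Since $T$ normalizes ${}^wU$, the element $t^{\nu_1} u_2 t^{-\nu_1}$ lies in ${}^wU(\CK)$, so
\[
u_1 t^{\nu_1} \cdot u_2 t^{\nu_2} \,G(\CO)
= u_1 \bigl(t^{\nu_1} u_2 t^{-\nu_1}\bigr) t^{\nu_1+\nu_2} \,G(\CO)
\in {}^wU(\CK)[t^{\nu_1+\nu_2}] = S^w_{\nu_1+\nu_2},
\]
which gives $S^w_{\nu_1,\nu_2} \subseteq \mul^{-1}(S^w_{\nu_1+\nu_2})$. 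Summing over the decompositions of $\nu$ yields the claimed inclusion. Disjointness of the union is then immediate: the projection $\pi_1$ sends $S^w_{\nu_1,\nu_2}$ into $S^w_{\nu_1}$, and distinct $S^w_{\nu_1}$ are disjoint in $\Gr$ by the Iwasawa decomposition.

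Next I would establish the reverse inclusion $\mul^{-1}(S^w_\nu) \subseteq \bigsqcup_{\nu_1+\nu_2=\nu} S^w_{\nu_1,\nu_2}$. Let $\xi = [(a,\, bG(\CO))] \in \mul^{-1}(S^w_\nu)$. By the Iwasawa decomposition, there exist a unique $\nu_1 \in X_*(T)$ and elements $u_1 \in {}^wU(\CK)$, $h \in G(\CO)$ such that $a = u_1 t^{\nu_1} h$; hence $\xi = [(u_1 t^{\nu_1},\, hb\,G(\CO))]$ after absorbing $h$ into the second coordinate via the defining equivalence relation. Applying the Iwasawa decomposition once more to the point $hb\,G(\CO) \in \Gr$, we obtain $\nu_2 \in X_*(T)$ and $u_2 \in {}^wU(\CK)$ with $hb\,G(\CO) = u_2[t^{\nu_2}]$. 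Then $\xi \in S^w_{\nu_1,\nu_2}$ by construction, and the computation of the first inclusion shows $\mul(\xi) \in S^w_{\nu_1+\nu_2}$. Since by assumption $\mul(\xi) \in S^w_\nu$ and the strata $S^w_\bullet$ are disjoint, we conclude $\nu_1 + \nu_2 = \nu$.

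The only mild subtlety is ensuring that the equivalence-class manipulation $[(u_1 t^{\nu_1} h,\, bG(\CO))] = [(u_1 t^{\nu_1},\, hb\,G(\CO))]$ is legal, which requires $h \in G(\CO)$ — exactly the condition arising from the Iwasawa decomposition — and that $t^{\nu_1}$ normalizes ${}^wU(\CK)$, which holds because $T$ normalizes every unipotent subgroup of $G$. No deeper geometric input is needed; the proof is a direct bookkeeping argument in $G(\CK)$ combined with two applications of the Iwasawa decomposition.
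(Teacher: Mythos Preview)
Your proof is correct and follows essentially the same approach as the paper's. The paper organizes the argument by first decomposing $\Gr \twp \Gr = \bigsqcup_{\nu_1} \pi_1^{-1}(S_{\nu_1}^w)$ and then showing $\pi_1^{-1}(S_{\nu_1}^w) \cap \mul^{-1}(S_\nu^w) = S_{\nu_1,\nu-\nu_1}^w$, whereas you verify the two inclusions directly; in both cases the content is the Iwasawa decomposition applied to each factor together with the equivalence relation absorbing the $G(\CO)$-part.
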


\begin{proof}
It follows from the Iwasawa decomposition
$\Gr=\bigsqcup_{\nu_{1} \in X_{*}(T)} S_{\nu_{1}}^{w}$ that 
\begin{equation*}
\Gr \twp \Gr=
\bigsqcup_{\nu_{1} \in X_{*}(T)} 
 \pi_{1}^{-1}(S_{\nu_{1}}^{w}),
\end{equation*}
where $\pi_{1}^{-1}(S_{\nu_{1}}^{w})=
 {}^{w}U(\CK)t^{\nu_{1}}G(\CO) \times^{G(\CO)}\Gr$. 
Therefore it suffices to show that
\begin{equation*}
\pi_{1}^{-1}(S_{\nu_{1}}^{w}) \cap 
\mul^{-1} (S^{w}_{\nu}) =
S^{w}_{\nu_{1},\,\nu-\nu_{1}}
\quad
\text{for each $\nu_{1} \in X_{*}(T)$}.
\end{equation*}
Now, for each $\nu_{1} \in X_{*}(T)$, 
let us take $[y] \in 
\pi_{1}^{-1}(S_{\nu_{1}}^{w}) \cap 
\mul^{-1} (S^{w}_{\nu})$, 
where $y \in 
 {}^{w}U(\CK)t^{\nu_{1}}G(\CO) \times \Gr$, and 
write it as: $y=(u_{1}t^{\nu_{1}}g_{1},\,g_{2}G(\CO))$ 
for $u_{1} \in {}^{w}U(\CK)$, $g_{1} \in G(\CO)$, and 
$g_{2} \in G(\CK)$. 
Since $\mul([y])=u_{1}t^{\nu_{1}}g_{1}g_{2}G(\CO) \in S^{w}_{\nu}$, 
we have 
\begin{equation*}
g_{1}g_{2}G(\CO) \in 
 (u_{1}t^{\nu_{1}})^{-1}S^{w}_{\nu}=
 {}^{w}U(\CK)t^{\nu-\nu_{1}}G(\CO).
\end{equation*}
Consequently, using the equivalence relation $\sim$ 
on $G(\CK) \times \Gr$, we see that
\begin{equation*}
[y]
 =\bigl[ (u_{1}t^{\nu_{1}}g_{1},\,g_{2}G(\CO)) \bigr]
 =\bigl[ (u_{1}t^{\nu_{1}},\,g_{1}g_{2}G(\CO)) \bigr]
 =\bigl[ (u_{1}t^{\nu_{1}},\,u_{2}t^{\nu-\nu_{1}}G(\CO)) \bigr]
\end{equation*}
for some $u_{2} \in {}^{w}U(\CK)$. 
This implies that 
\begin{equation*}
[y] \in 
  {}^{w}U(\CK)t^{\nu_{1}} 
  \times^{{}^{w}U(\CO)} 
  {}^{w}U(\CK)[t^{\nu-\nu_{1}}]=
S^{w}_{\nu_{1},\,\nu-\nu_{1}},
\end{equation*}
and hence 
$\pi_{1}^{-1}(S_{\nu_{1}}^{w}) \cap 
 \mul^{-1} (S^{w}_{\nu}) \subset 
S^{w}_{\nu_{1},\,\nu-\nu_{1}}$.
The opposite inclusion is obvious. 
Thus, we obtain 
$\pi_{1}^{-1}(S_{\nu_{1}}^{w}) \cap 
 \mul^{-1} (S^{w}_{\nu})=
 S^{w}_{\nu_{1},\,\nu-\nu_{1}}$.
This proves the lemma. 
\end{proof}

For $\nu_{1},\,\nu_{2} \in X_{*}(T)$, we set
$S_{\nu_{1},\,\nu_{2}} := S_{\nu_{1},\,\nu_{2}}^{e}$. 
If we take (and fix) an element $t \in T(\BR)$ such that
\begin{equation*}
\lim_{k \to \infty} \Ad(t^{k}) u = e
\quad \text{for all $u \in U$}, 
\end{equation*}
then we have (by \cite[Eq.(3.5)]{MV2})
\begin{equation*}
S_{\nu}=
 \left\{
 [y] \in \Gr \ \biggm| \ 
 \lim_{k \to \infty} t^{k}[y]=[t^{\nu}]
 \right\}
\quad \text{for $\nu \in X_{*}(T)$}.
\end{equation*}
From this, by using Lemma~\ref{lem:invS}, 
we have
%
%%%%%%%%%%%%%%
%%% eq:Snn %%%
%%%%%%%%%%%%%%
%
\begin{equation} \label{eq:Snn}
S_{\nu_{1},\,\nu_{2}} = 
 \left\{ [y] \in \Gr \twp \Gr \ \biggm| \ 
 \lim_{k \to \infty} t^{k}[y]=
 (t^{\nu_{1}},\,[t^{\nu_{2}}])
 \right\}
\quad \text{for $\nu_{1},\,\nu_{2} \in X_{*}(T)$};
\end{equation}
in particular, these strata of $\Gr \twp \Gr$ 
are simply-connected. 

For each $\nu \in X_{*}(T)$ and $w \in W$, 
we set
\begin{equation*}
\BS^{w}_{\nu} := {}^{w}U(\CK)t^{\nu}({}^{w}U(\CO)) / {}^{w}U(\CO), 
\end{equation*}
which is canonically isomorphic to 
${}^{w}U(\CK) t^{\nu} G (\CO) / G (\CO) = S^{w}_{\nu}$ 
since ${}^{w}U(\CK) \cap G(\CO)={}^{w}U(\CO)$; 
note that ${}^{w}U(\CK)t^{\nu}({}^{w}U(\CO))={}^{w}U(\CK)t^{\nu}$
since $t^{\nu}({}^{w}U(\CK))=({}^{w}U(\CK))t^{\nu}$. 
Also, for a subset $\CX \subset \Gr$, 
we define the intersection $\CX \cap \BS^{w}_{\nu}$ 
to be the image of $\CX \cap S^{w}_{\nu} \subset S_{\nu}^{w}$ 
under the identification $\BS^{w}_{\nu} = S^{w}_{\nu}$ above. 

In the sequel, for an algebraic variety $\CX$, 
we denote by $\Irr(\CX)$ the set of irreducible components of $\CX$.
Let $\lambda \in X_{*}(T)_{+}$ and $\nu \in X_{*}(T)$ be 
such that $\Gr^{\lambda} \cap S_{\nu} \ne \emptyset$, 
i.e., $\nu \in \Omega(\lambda)$; note that 
$\Gr^{\lambda} \cap S_{\nu} \ne \emptyset$ if and only if 
$\Gr^{\lambda} \cap S_{w^{-1} \cdot \nu} \ne \emptyset$, i.e., 
$w^{-1} \cdot \nu \in \Omega(\lambda)$ for each $w \in W$. 
Let us take an arbitrary $w \in W$. 
Because $\Gr^{\lambda} \cap S_{\nu}^{w}=
\dot{w}(\Gr^{\lambda} \cap S_{w^{-1} \cdot \nu})$ 
for $w \in W$, we have a bijection
\begin{equation*}
\Irr\left(\ol{\Gr^{\lambda} \cap S_{w^{-1} \cdot \nu}}\right)
\rightarrow
\Irr\left(\ol{\Gr^{\lambda} \cap S_{\nu}^{w}}\right), \quad
\bb \mapsto \dot{w}\bb. 
\end{equation*}
Thus, each element of 
$\Irr\left(\ol{\Gr^{\lambda} \cap S_{\nu}^{w}}\right)$ 
can be written in the form $\dot{w}\bb$ for a unique 
MV cycle $\bb \in \CZ(\lambda)_{w^{-1} \cdot \nu}=
\Irr\left(\ol{\Gr^{\lambda} \cap S_{w^{-1} \cdot \nu}}\right)$.
The variety $\bb^{w}$ defined by \eqref{eq:bbw} is a special case of 
such elements, in which $\bb \in \CZ(\lambda)$ is an extremal 
MV cycle with GGMS datum $\mu_{\bullet}$ and $\nu=\mu_{w}$. 
%
%%%%%%%%%%%%%%%%%%
%%% lem:stable %%%
%%%%%%%%%%%%%%%%%%
%
\begin{lem} \label{lem:stable}
With the notation as above, 
let us take an arbitrary element 
\begin{equation*}
\dot{w}\bb \in 
\Irr\left(\ol{\Gr^{\lambda} \cap S_{\nu}^{w}}\right), \quad
\text{\rm where} \quad 
\bb \in \CZ(\lambda)_{w^{-1} \cdot \nu}.
\end{equation*} 
Then, the intersection $\dot{w}\bb \cap S_{\gamma}^{w}$ 
(and hence $\dot{w}\bb \cap \BS_{\gamma}^{w}$) is 
stable under the action of ${}^{w}U(\CO)$ 
for all $\gamma \in X_{*}(T)$. 
\end{lem}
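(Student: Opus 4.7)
The plan is to reduce the statement to the assertion that the MV cycle $\bb \in \CZ(\lambda)_{w^{-1}\cdot\nu}$ itself is stable under $U(\CO)$, and then deduce this from the fact that $U(\CO)$ is connected, so it must permute the irreducible components of a stable variety trivially.

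First, observe that $S^w_\gamma = {}^wU(\CK)[t^\gamma]$ is, by its very definition, stable under left multiplication by ${}^wU(\CK)$, and hence a fortiori under ${}^wU(\CO)$. Thus the intersection $\dot{w}\bb \cap S^w_\gamma$ will be ${}^wU(\CO)$-stable as soon as $\dot w \bb$ is, and conjugating by $\dot w$ this is equivalent to showing that $\bb \subset \ol{\Gr^\lambda \cap S_{w^{-1}\cdot\nu}}$ is stable under $U(\CO)$.

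Next, I would note that both subvarieties whose intersection is being closed are $U(\CO)$-stable: $\Gr^\lambda = G(\CO)[t^\lambda]$ is stable under the full group $G(\CO) \supset U(\CO)$, while $S_{w^{-1}\cdot\nu} = U(\CK)[t^{w^{-1}\cdot\nu}]$ is stable under $U(\CK) \supset U(\CO)$. Consequently $\ol{\Gr^\lambda \cap S_{w^{-1}\cdot\nu}}$ is $U(\CO)$-stable, so $U(\CO)$ acts on the finite set $\Irr(\ol{\Gr^\lambda \cap S_{w^{-1}\cdot\nu}}) = \CZ(\lambda)_{w^{-1}\cdot\nu}$ by permutations.

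The last ingredient is the connectedness (indeed, irreducibility) of the pro-algebraic group $U(\CO)$: since $U$ is a connected unipotent group, $U(\CO)$ is a projective limit of affine spaces, hence connected. A connected group cannot act nontrivially on a finite set by permutations, so every element of $\Irr(\ol{\Gr^\lambda \cap S_{w^{-1}\cdot\nu}})$ — in particular $\bb$ — is pointwise fixed, i.e., $\bb$ is $U(\CO)$-stable. Translating back by $\dot w$ gives the ${}^wU(\CO)$-stability of $\dot w \bb$, and intersecting with the ${}^wU(\CO)$-stable stratum $S^w_\gamma$ (and passing to its image $\BS^w_\gamma$) yields the claim. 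The only delicate point is the connectedness of $U(\CO)$, but this is standard for unipotent groups and presents no real obstacle.
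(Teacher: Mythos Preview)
Your proof is correct and follows essentially the same approach as the paper: both reduce to showing that $\bb$ is $U(\CO)$-stable by noting that $U(\CO)=G(\CO)\cap U(\CK)$ is connected and therefore acts trivially on the finite set of irreducible components of $\ol{\Gr^{\lambda}\cap S_{w^{-1}\cdot\nu}}$, then translate by $\dot w$. The paper's version is simply a terser rendition of your argument.
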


\begin{proof}
By definition, each MV cycle is an irreducible component of 
the (Zariski-) closure of the intersection of a $G(\CO)$-orbit 
and a $U(\CK)$-orbit. Since $U(\CO)=G(\CO) \cap U(\CK)$ is 
connected, such an irreducible component is stable under 
the action of $U(\CO)$. Therefore, the variety $\dot{w}\bb$ 
(and hence its intersection with an arbitrary ${}^{w}U(\CK)$-orbit) 
is stable under the action of ${}^{w}U(\CO)$. 
This proves the lemma. 
\end{proof}

Let $\lambda_{1},\,\lambda_{2} \in X_{*}(T)_{+}$ and 
$\nu_{1},\,\nu_{2} \in X_{*}(T)$ be such that 
$\Gr^{\lambda_{i}} \cap S_{\nu_{i}} \ne \emptyset$ 
for $i=1,\,2$, and let $w \in W$. 
Let us take $\bb_{1} \in \CZ(\lambda_{1})_{\nu_{1}}$, 
$\dot{w}\bb_{2} \in 
\Irr\left(\ol{\Gr^{\lambda_{2}} \cap S_{\nu_{2}}^{w}}\right)$, 
and $\gamma_{1},\,\gamma_{2} \in X_{*}(T)$. 
Then, by virtue of the lemma above, 
we can form the twisted product 
\begin{equation*}
(\bb_{1} \cap \BS^{w}_{\gamma_{1}})^{\sim}
 \times^{{}^{w}U(\CO)} 
(\dot{w}\bb_{2} \cap \BS^{w}_{\gamma_{2}}) 
\subset 
{}^{w}U(\CK)t^{\gamma_{1}} 
 \times^{{}^{w}U(\CO)} S^{w}_{\gamma_{2}}=
S^{w}_{\gamma_{1},\,\gamma_{2}},
\end{equation*}
where $(\bb_{1} \cap \BS^{w}_{\gamma_{1}})^{\sim}$
denotes the pullback of 
\begin{equation*}
\bb_{1} \cap \BS^{w}_{\gamma_{1}} \subset \BS^{w}_{\gamma_{1}}=
{}^{w}U(\CK)t^{\gamma_{1}}({}^{w}U(\CO))/{}^{w}U(\CO)
\end{equation*}
to ${}^{w}U(\CK)t^{\gamma_{1}}({}^{w}U(\CO))=
{}^{w}U(\CK)t^{\gamma_{1}} \subset G(\CK)$. 
By $\bb_{1} \str{w}{\gamma_{1}}{\gamma_{2}} \dot{w}\bb_{2}$, 
we denote the image of this algebraic subvariety of 
$\Gr \twp \Gr$ under the map 
$\mul:\Gr \twp \Gr \rightarrow \Gr$; note that 
$\bb_{1} \str{w}{\gamma_{1}}{\gamma_{2}} \dot{w}\bb_{2} 
 \subset \mul(S^{w}_{\gamma_{1},\,\gamma_{2}})
 =S^{w}_{\gamma_{1}+\gamma_{2}}$. 

The following is a reformulation of 
Braverman-Gaitsgory's result on tensor products of 
highest weight crystals in \cite{BrGa} (see also \cite{BFG});
we will give a brief account of the relationship 
in the Appendix. Here we should warn the reader that 
the convention on the tensor product rule for crystals 
in \cite{BrGa} is opposite to ours, i.e., to that of Kashiwara 
(see, for example, \cite{Kasoc} and \cite{Kasb}). 
%
%%%%%%%%%%%%%%
%%% thm:BG %%%
%%%%%%%%%%%%%%
%
\begin{thm} \label{thm:BG}
Let $\lambda_{1},\,\lambda_{2} \in X_{*}(T)_{+}$. 
There exists a bijection
\begin{equation*}
\Phi_{\lambda_{1},\,\lambda_{2}} : 
 \mv (\lambda_{1}) \times \mv (\lambda_{2})
\rightarrow
 \bigsqcup_{\nu \in X_{*}(T)} 
   \Irr \left(
      \ol{\mul^{-1} (S_{\nu}) \cap 
      \bigl(\Gr^{\lambda_{1}} \twp \Gr^{\lambda_{2}}}\bigr) 
   \right)
\end{equation*}
given as follows\,{\rm:}
for $P_{1} \in \mv (\lambda_{1})$ and $P_{2} \in \mv(\lambda_{2})$, 
%
%%%%%%%%%%%%%%%%%
%%% eq:Phi-ll %%%
%%%%%%%%%%%%%%%%%
%
\begin{equation} \label{eq:Phi-ll}
\Phi_{\lambda_{1},\,\lambda_{2}}
 (P_{1},\,P_{2})=
\ol{
 \bigl(\Phi_{\lambda_{1}}(P_{1}) \cap \BS_{\nu_{1}}\bigr)^{\sim}
 \times^{U(\CO)}
 \bigl(\Phi_{\lambda_{2}}(P_{2}) \cap \BS_{\nu_{2}}\bigr)
}, 
\end{equation}
where $\nu_{1}:=\wt (P_{1})$, $\nu_{2}:=\wt(P_{2})$, and 
$\bigl(\Phi_{\lambda_{1}}(P_{1}) \cap \BS_{\nu_{1}}\bigr)^{\sim}$ denotes 
the pullback of $\Phi_{\lambda_{1}}(P_{1}) \cap \BS_{\nu_{1}} \subset S_{\nu_{1}}$ 
to $U(\CK)t^{\nu_{1}} \subset G(\CK)$. 
Moreover, the bijection $\Phi_{\lambda_{1},\,\lambda_{2}}$ 
has the following properties. 

{\rm (i)} 
For each $P_{1} \in \mv(\lambda_{1})$ and 
$P_{2} \in \mv(\lambda_{2})$, 
the image of 
$\Phi_{\lambda_{1},\,\lambda_{2}}(P_{1},\,P_{2})$ 
under the map $\mul$ is equal to $\Phi_{\lambda}(P)$ 
for a unique $\lambda \in X_{*}(T)_{+}$ and 
a unique $P \in \mv(\lambda)$ such that 
$\iota_{\lambda}(P)=P_{2} \otimes P_{1}$, 
where $\iota_{\lambda}:\mv(\lambda) \hookrightarrow 
 \mv(\lambda_{2}) \otimes \mv(\lambda_{1})$ is 
an embedding of crystals\,{\rm;}

{\rm (ii)}
$\pi_{1} (\Phi_{\lambda_{1},\,\lambda_{2}} (P_{1},\,P_{2})) = 
 \Phi_{\lambda_{1}} (P_{1})$ 
for each $P_{1} \in \mv (\lambda_{1})$ and 
$P_{2} \in \mv (\lambda_{2})$\,{\rm;}

{\rm (iii)} 
$[(t^{\nu_{1}},\,\Phi_{\lambda_{2}} (P_{2}))] \subset 
 \Phi_{\lambda_{1},\,\lambda_{2}} (P_{1},\,P_{2})$ 
for each $P_{1} \in \mv (\lambda_{1})$ with $\nu_{1}=\wt (P_{1})$ 
and $P_{2} \in \mv (\lambda_{2})$. 
\end{thm}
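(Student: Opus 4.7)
The plan is to derive Theorem~\ref{thm:BG} from the geometric tensor-product construction of Braverman-Gaitsgory \cite{BrGa}, after carefully accounting for their opposite convention on the tensor product of crystals. The starting point is Proposition~\ref{prop:mv-44}, which guarantees that the multiplication map $\mul:\overline{\Gr^{\lambda_1}} \twp \overline{\Gr^{\lambda_2}} \to \Gr$ is projective, birational, and semi-small with respect to the $G(\CO)$-orbit stratification. Under geometric Satake, this semi-smallness realizes the tensor product rule for $L(\lambda_1) \otimes L(\lambda_2)$, and the resulting combinatorial shadow on irreducible components of fibers is exactly the crystal structure on $\mv(\lambda_1) \otimes \mv(\lambda_2)$ (in Braverman-Gaitsgory's convention).

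To describe the irreducible components of $\overline{\mul^{-1}(S_\nu) \cap (\Gr^{\lambda_1} \twp \Gr^{\lambda_2})}$, I would first apply Lemma~\ref{lem:invS} to stratify $\mul^{-1}(S_\nu)$ as $\bigsqcup_{\nu_1+\nu_2=\nu} S_{\nu_1,\nu_2}$, where each stratum is a twisted product $U(\CK)t^{\nu_1} \times^{U(\CO)} S_{\nu_2}$. Intersecting with $\Gr^{\lambda_1} \twp \Gr^{\lambda_2}$ produces a stratum that fibers over $\Gr^{\lambda_1} \cap S_{\nu_1}$ with typical fiber $\Gr^{\lambda_2} \cap S_{\nu_2}$. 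By the $U(\CO)$-stability of MV cycles (Lemma~\ref{lem:stable}), the irreducible components of the closure of each such stratum are precisely the twisted products $\overline{\bb_1^{\sim} \times^{U(\CO)} \bb_2}$ for $\bb_1 \in \CZ(\lambda_1)_{\nu_1}$ and $\bb_2 \in \CZ(\lambda_2)_{\nu_2}$, where $\bb_1^{\sim}$ denotes the pullback of $\bb_1 \cap \BS_{\nu_1}$ to $U(\CK) t^{\nu_1}$. Via the bijection $\Phi_{\lambda_i}:\mv(\lambda_i) \to \CZ(\lambda_i)$ of Theorem~\ref{thm:Kam1}, this translates into pairs $(P_1,P_2) \in \mv(\lambda_1) \times \mv(\lambda_2)$, and taking the disjoint union over $\nu$ defines the bijection $\Phi_{\lambda_1,\lambda_2}$ with the explicit formula~\eqref{eq:Phi-ll}.

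Properties (ii) and (iii) should then be essentially tautological consequences of the construction: the first-factor projection $\pi_1$ of the twisted product $\bigl(\Phi_{\lambda_1}(P_1) \cap \BS_{\nu_1}\bigr)^{\sim} \times^{U(\CO)} \bigl(\Phi_{\lambda_2}(P_2) \cap \BS_{\nu_2}\bigr)$ recovers $\Phi_{\lambda_1}(P_1)$ after closure, while the fiber over $[t^{\nu_1}]$ contains $\Phi_{\lambda_2}(P_2)$ since $[t^{\nu_1}] \in \Phi_{\lambda_1}(P_1)$ by the definition of the GGMS datum. Property (i), the identification of $\mul\bigl(\Phi_{\lambda_1,\lambda_2}(P_1,P_2)\bigr)$ with an MV cycle $\Phi_\lambda(P)$ satisfying $\iota_\lambda(P) = P_2 \otimes P_1$, is the substantive content; it follows by transporting Braverman-Gaitsgory's crystal isomorphism through Kamnitzer's identification $\Phi_\lambda$ and the reversal of tensor factors dictated by the convention mismatch.

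The main obstacle will be making the convention translation rigorous: one must verify that the crystal structure on $\bigsqcup_\nu \Irr\bigl(\overline{\mul^{-1}(S_\nu) \cap (\Gr^{\lambda_1} \twp \Gr^{\lambda_2})}\bigr)$ induced geometrically coincides, after swapping the two tensor factors, with the Lusztig-Berenstein-Zelevinsky structure on $\mv(\lambda_2) \otimes \mv(\lambda_1)$ as defined via Kashiwara's tensor product rule. This is precisely why in our formulation $P_1 \in \mv(\lambda_1)$ ends up on the \emph{right} in $P_2 \otimes P_1$ although $\Gr^{\lambda_1}$ sits on the \emph{left} of the twisted product. Since this convention matching is orthogonal to the geometric content, I would defer its careful verification to the Appendix, as already announced in the introduction.
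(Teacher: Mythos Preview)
Your proposal is correct and follows essentially the same route as the paper's own argument in the Appendix: stratify $\mul^{-1}(S_\nu)$ via Lemma~\ref{lem:invS}, use the $U(\CO)$-stability of MV cycles (Lemma~\ref{lem:stable}) to form the twisted products, read off properties~(ii) and~(iii) from the construction, and defer the crystal-compatibility check underlying property~(i) to a separate verification against \cite{BrGa}. The paper makes two steps slightly more explicit than you do: it pins down the dimension of the twisted product via an isomorphism induced by $\mul$ and then invokes Lemma~\ref{lem:dimint} to conclude it is genuinely an irreducible component, and for property~(i) it passes through the cohomological commutative diagram of Theorem~\ref{thm:BGc} together with a reduction to the semisimple rank~$1$ case; but these are refinements of your outline rather than a different strategy.
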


%==============================%
%     START SUBSECTION 0403    %
%==============================%
%
\subsection{Proof of the polytopal estimate.}
\label{subsec:prf-tensor}

This subsection is devoted to the proof of Theorem~\ref{thm:tensor}. 
Let $G$ be a complex, connected, semisimple algebraic group 
with Lie algebra $\Fg$. We keep the setting of \S\ref{subsec:tensor}. 
Let $\mu^{(1)}_{\bullet}=(\mu^{(1)}_{w})_{w \in W}$ and 
$\mu^{(2)}_{\bullet}=(\mu^{(2)}_{w})_{w \in W}$ be the GGMS data of 
$P_{1} \in \mv(\lambda_{1})$ and $P_{2} \in \mv(\lambda_{2})$, 
respectively. Also, let $\mu_{\bullet}=(\mu_{w})_{w \in W}$ be 
the GGMS datum of $P \in \mv(\lambda)$; 
note that
\begin{equation*}
\mu_{e}=\wt(P)=
\wt(P_{1})+\wt(P_{2})=
\mu^{(1)}_{e}+\mu^{(1)}_{e}.
\end{equation*}
Recall from Proposition~\ref{prop:Minkowski} that 
the Minkowski sum $P_{1}+P_{2}$ is a pseudo-Weyl polytope 
$P(\mu^{(1)}_{\bullet}+\mu^{(2)}_{\bullet})$ with GGMS datum 
$\mu^{(1)}_{\bullet}+\mu^{(1)}_{\bullet}
 =(\mu^{(1)}_{w}+\mu^{(2)}_{w})_{w \in W}$. 
Therefore, it follows from equation \eqref{eq:poly} 
together with Remark~\ref{rem:moment} that 
\begin{align*}
P_{1}+P_{2} & = 
\bigcap_{w \in W}
 \bigl\{ 
 v \in \Fh_{\BR} \mid 
 w^{-1} \cdot v- w^{-1} \cdot \bigl(\mu^{(1)}_{w}+\mu^{(2)}_{w}\bigr) 
 \in \textstyle{\sum_{j \in I}\BR_{\ge 0}h_{j}}
 \bigr\} \\[3mm]
& = 
\bigcap_{w \in W}
 \Conv \biggl\{
  \gamma \in X_{*}(T) \subset \Fh_{\BR} \ \biggm| \ 
  [t^{\gamma}] \in \ol{S^{w}_{\mu^{(1)}_{w}+\mu^{(2)}_{w}}}
\biggr\}.
\end{align*}
Also, recall from Theorem~\ref{thm:Kam1} that 
$\Phi_{\lambda}(P) \in \CZ(\lambda)$ and 
\begin{equation*}
P=\Conv \bigl\{
  \gamma \in X_{*}(T) \subset \Fh_{\BR} \mid 
  [t^{\gamma}] \in \Phi_{\lambda}(P)
\bigr\}.
\end{equation*}
Hence, in order to prove that $P \subset P_{1}+P_{2}$, 
it suffices to show that
%
%%%%%%%%%%%%%%%
%%% eq:goal %%%
%%%%%%%%%%%%%%%
%
\begin{equation} \label{eq:goal}
\Phi_{\lambda} (P) \subset 
 \ol{S^{w}_{\mu_{w}^{(1)} + \mu_{w}^{(2)}}}
\quad \text{for all $w \in W$}.
\end{equation}

We set $\bb^{(1)} := \Phi_{\lambda_{1}} (P_{1}) \in \CZ(\lambda_{1})$ 
and $\bb^{(2)} := \Phi_{\lambda_{2}} (P_{2}) \in \CZ(\lambda_{2})$.
Because $P_{2}=P(\mu^{(2)}_{\bullet})$ is the extremal MV polytope 
of weight $x \cdot \lambda$ for some $x \in W$ by our assumption, 
we know from Theorem~\ref{thm:GGMS-Ext} that 
$\mu^{(2)}_{w} \in W \cdot \lambda_{2}$ for all $w \in W$. 
Hence the algebraic variety $\bb^{(2),w}:=
\ol{\Gr^{\lambda_{2}} \cap S^{w}_{\mu^{(2)}_{w}}}$ is 
irreducible, and is the $\dot{w}$-translate of 
the extremal MV cycle $\bb_{w^{-1} \cdot \mu^{(2)}_{w}}$ 
of weight $w^{-1} \cdot \mu^{(2)}_{w}$ 
(see Remark~\ref{rem:extcyc}); note that 
$\bb^{(2),e}=\bb^{(2)}$. 

Now suppose, contrary to our assertion \eqref{eq:goal}, 
that 
$\Phi_{\lambda} (P) \not\subset 
 \ol{S^{w}_{\mu_{w}^{(1)} + \mu_{w}^{(2)}}}$
for some $w \in W$; we take and fix such a $w \in W$. 
Then, by equation \eqref{eq:Snuw} (see also Remark~\ref{rem:Snuw}), 
there exists some $\nu \in X_{*}(T)$ such that
%
%%%%%%%%%%%%%%%
%%% eq:prf1 %%%
%%%%%%%%%%%%%%%
%
\begin{equation} \label{eq:prf1}
\Phi_{\lambda}(P) \cap S_{\nu}^{w} \ne \emptyset
\quad \text{and} \quad
w^{-1} \cdot \nu \not\ge 
w^{-1} \cdot (\mu^{(1)}_{w} + \mu^{(2)}_{w}). 
\end{equation}

\begin{claim*}
For the (fixed) $w \in W$ above, 
we have the following inclusion of varieties when 
they are regarded as subvarieties of $\Gr$\,{\rm:}
\begin{equation*}
\bb^{(1)} \stra{e} \bb^{(2)}
\subset 
\bb^{(1)} \stra{w} \bb^{(2),w}.
\end{equation*}
\end{claim*}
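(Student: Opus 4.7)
I would prove the Claim by induction on $\ell(w)$. For the base case $w = e$, both sides coincide: $\bb^{(2), e} = \bb^{(2)}$, and $\stra{e}$ is the usual $U(\CO)$-twisted product, so equality holds trivially. For the inductive step, choose a simple reflection $s_j$ with $w' := ws_j$ satisfying $\ell(w') = \ell(w) - 1$, so $w' < w$. By the inductive hypothesis applied to $w'$ we have $\bb^{(1)} \stra{e} \bb^{(2)} \subset \bb^{(1)} \stra{w'} \bb^{(2), w'}$, so it suffices to show the one-step containment
\begin{equation*}
\bb^{(1)} \stra{w'} \bb^{(2), w'} \subset \bb^{(1)} \stra{w} \bb^{(2), w}. \qquad (\ast)
\end{equation*}

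The key structural fact for $(\ast)$ is the identity of minimal parabolics ${}^w P_j = {}^{w'} P_j$: both equal the parabolic attached to the reflection in $w \cdot \alpha_j$. This parabolic has common Levi ${}^w L_j$ and common unipotent radical $\mathcal{U} := {}^w U_j = {}^{w'} U_j$, with ${}^w U = \mathcal{U} \cdot U_{-w\alpha_j}$ and ${}^{w'} U = \mathcal{U} \cdot U_{w\alpha_j}$, where $U_{\pm w\alpha_j}$ are the two root subgroups of ${}^w L_j$. Moreover, the length formula \eqref{eq:length} applied to the GGMS datum of $P_1$ yields $\mu^{(1)}_{w'} - \mu^{(1)}_w \in \BZ_{\ge 0}(w\alpha_j)^\vee$, so the torus shift $t^{\mu^{(1)}_{w'} - \mu^{(1)}_w}$ lies in ${}^w L_j(\CK)$. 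Lemma~\ref{lem:tran} supplies ${}^w L_j(\CO) \cdot \bb^{(2), w'} \subset \bb^{(2), w}$, while Lemma~\ref{lem:ext1} (exploiting the extremality of $P_2$) provides the two-case description of $\mu^{(2)}_w$ in terms of $\mu^{(2)}_{w'}$ together with the inequality $\pair{\mu^{(2)}_w}{w \cdot \alpha_j} \le 0$.

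Given a representative $\mul([(a, b)])$ of a point in the LHS of $(\ast)$, with $a \in {}^{w'} U(\CK) t^{\mu^{(1)}_{w'}}$ and $b \in {}^{w'} U(\CK)[t^{\mu^{(2)}_{w'}}] \cap \bb^{(2), w'}$, I would use the factorization ${}^{w'} U = \mathcal{U} \cdot U_{w\alpha_j}$ and the torus shift above to rewrite $a = \tilde{a} \cdot \ell$ with $\tilde{a} \in \mathcal{U}(\CK) \cdot t^{\mu^{(1)}_w} \subset {}^w U(\CK) t^{\mu^{(1)}_w}$ and $\ell \in {}^w L_j(\CK)$. Then, decomposing $\ell = k \cdot \ell_0$ via the Iwasawa decomposition in the rank-one group ${}^w L_j(\CK)$ with $k \in {}^w L_j(\CO)$, and using the twisted-product equivalence $(a, b) \sim (ah^{-1}, hb)$ for $h \in G(\CO)$ to move $k$ across, we obtain the representative $[(\tilde{a} \ell_0, k b)]$; Lemma~\ref{lem:tran} then gives $k b \in {}^w L_j(\CO) \cdot \bb^{(2), w'} \subset \bb^{(2), w}$.

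The main obstacle is the residual factor $\ell_0$ (a torus shift together with a possibly non-$(\CO)$-integral root-subgroup element inside ${}^w L_j(\CK)$): it must be absorbed against $k b$ so that $\ell_0 \cdot k b$ lies in ${}^w U(\CK)[t^{\mu^{(2)}_w}] \cap \bb^{(2), w}$, i.e., in the correct Iwasawa stratum for the $\stra{w}$-product. This is precisely where the extremality hypothesis on $P_2$ is essential: the two-case description of $\mu^{(2)}_w$ in Lemma~\ref{lem:ext1} together with $\pair{\mu^{(2)}_w}{w \cdot \alpha_j} \le 0$ controls exactly how the base point $[t^{\mu^{(2)}_{w'}}]$ is transported to $[t^{\mu^{(2)}_w}]$ under the ${}^w L_j(\CK)$-action, and guarantees that the final representative is of the required form. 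Once this last step is verified, $(\ast)$ follows and the induction closes.
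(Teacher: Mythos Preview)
Your overall architecture matches the paper's: induction along a reduced expression of $w$, passing from $w'=ws_j<w$ to $w$ using the minimal parabolic ${}^{w}P_j={}^{w'}P_j$ and invoking Lemma~\ref{lem:tran} to push the second factor from $\bb^{(2),w'}$ into $\bb^{(2),w}$. Where you diverge is in how you produce the transition element that gets moved across the twisted product, and this is exactly where your acknowledged ``main obstacle'' lies unresolved.

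Concretely, you factor $a=\tilde a\cdot\ell$ with $\ell\in{}^{w}L_j(\CK)$ by hand, and then try to split $\ell$ by an Iwasawa decomposition in the rank-one Levi. The residual piece $\ell_0$ is in general \emph{not} $\CO$-integral, so you cannot simply slide it across the $G(\CO)$-equivalence; you must instead show that $\ell_0$ can be absorbed so that the first coordinate lands in $(\bb^{(1)}\cap\BS^{w}_{\mu^{(1)}_w})^{\sim}$ and the second in $\bb^{(2),w}\cap\BS^{w}_{\mu^{(2)}_w}$. You gesture at Lemma~\ref{lem:ext1} for this, but the lemma only controls the vertex $\mu^{(2)}_w$ of the extremal polytope; it says nothing about how a non-integral $\ell_0\in{}^{w}L_j(\CK)$ acts on an \emph{arbitrary} point of $\bb^{(2),w'}$, nor does it guarantee that $\tilde a\,\ell_0\,G(\CO)$ remains in $\bb^{(1)}$. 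As written, this step is a genuine gap.

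The paper sidesteps the obstacle entirely by a density trick you are missing. Since $\bb^{(1)}=\Phi_{\lambda_1}(P_1)=\ol{\bigcap_{z\in W}S^{z}_{\mu^{(1)}_z}}$ by Theorem~\ref{thm:Kam1}, the locus $\bb^{(1)}\cap S^{w'}_{\mu^{(1)}_{w'}}\cap S^{w}_{\mu^{(1)}_{w}}$ is open and dense in $\bb^{(1)}$. One may therefore assume the lift $y$ of the first coordinate already satisfies $yG(\CO)\in S^{w}_{\mu^{(1)}_{w}}$, i.e.\ $y=u\,t^{\mu^{(1)}_{w}}g$ with $u\in{}^{w}U(\CK)$ and, crucially, $g\in G(\CO)$. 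A short calculation then forces $g\in{}^{w'}P_j(\CK)\cap G(\CO)={}^{w'}P_j(\CO)$, so $g$ is $\CO$-integral \emph{for free} and Lemma~\ref{lem:tran} applies directly with no leftover $\ell_0$. The price is that one only proves the inclusion $\bb^{(1)}\star_{w'}\bb^{(2)}\subset\ol{\bb^{(1)}\star_{w}\bb^{(2)}}$ into the closure at each step---which is harmless, and indeed is all that equations~\eqref{eq:prf2}--\eqref{eq:prf4} require.
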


\noindent
{\it Proof of Claim.} 
Let $w = s_{i_{1}} s_{i_{2}} \cdots s_{i_{\ell}}$ be 
a reduced expression, and set 
$w_{k} = s_{i_{1}} s_{i_{2}} \cdots s_{i_{k}}$ 
for $0 \le k \le \ell$. 
For simplicity of notation, we set for $0 \le k \le \ell$
\begin{equation*}
\bb^{(2)}_{k}:=\bb^{(2),w_{k}}, \qquad
\bb^{(1)} \star_{k} \bb^{(2)}:=
\bb^{(1)} \stra{w_{k}} \bb^{(2), w_{k}}, 
\quad \text{and}
\end{equation*}
\begin{equation*}
\mu^{(1)}_{k}:=\mu^{(1)}_{w_{k}}, \quad
\mu^{(2)}_{k}:=\mu^{(2)}_{w_{k}}; 
\end{equation*}
note that
\begin{equation*}
\bb^{(1)} \star_{0} \bb^{(2)}=
\bb^{(1)} \stra{e} \bb^{(2),e}=
\bb^{(1)} \stra{e} \bb^{(2)}. 
\end{equation*}
If we can show the inclusion
%
%%%%%%%%%%%%%%%
%%% eq:incl %%%
%%%%%%%%%%%%%%%
%
\begin{equation} \label{eq:incl}
\bb^{(1)} \star_{k} \bb^{(2)} \subset 
\ol{ \bb^{(1)} \star_{k+1} \bb^{(2)} }
\quad \text{for each $0 \le k \le \ell-1$},
\end{equation}
then we will obtain the following sequence of 
inclusions: 
\begin{align*}
& \bb^{(1)} \stra{e} \bb^{(2)}=
  \bb^{(1)} \star_{0} \bb^{(2)} \subset 
  \ol{ \bb^{(1)} \star_{1} \bb^{(2)} } \subset 
  \ol{ \bb^{(1)} \star_{2} \bb^{(2)} } \subset \cdots \\
& \hspace*{20mm} \cdots \subset 
\ol{ \bb^{(1)} \star_{\ell} \bb^{(2)} } =
\ol{ \bb^{(1)} \stra{w_{\ell}} \bb^{(2),w_{\ell}} } =
\ol{ \bb^{(1)} \stra{w} \bb^{(2),w} },
\end{align*}
as desired. In order to show the inclusion \eqref{eq:incl}, 
take an element 
$[(y,\,gG(\CO))] \in \bb^{(1)} \star_{k} \bb^{(2)}$, 
where 
\begin{equation*}
y \in 
 (\bb^{(1)} \cap \BS^{w_{k}}_{\mu^{(1)}_{k}})^{\sim}
 \subset {}^{w_{k}}U(\CK)t^{\mu^{(1)}_{k}}, \qquad
gG(\CO) \in 
  \bb^{(2)}_{k} \cap S^{w_{k}}_{\mu^{(2)}_{k}} \cong 
  \bb^{(2)}_{k} \cap \BS^{w_{k}}_{\mu^{(2)}_{k}}, 
\end{equation*}
and write the element $y \in {}^{w_{k}}U(\CK)t^{\mu^{(1)}_{k}}$ 
as: $y =u_{k}t^{\mu^{(1)}_{k}}$ for $u_{k} \in {}^{w_{k}}U(\CK)$. 
Since $\bb^{(1)}=\ol{\bigcap_{z \in W} S^{z}_{\mu^{(1)}_{z}}}$ 
by Theorem~\ref{thm:Kam1}, we may (and do) assume that 
$yG(\CO) \in S^{w_{k}}_{\mu^{(1)}_{k}} \cap 
 S^{w_{k+1}}_{\mu^{(1)}_{w_{k+1}}}$ to show 
the inclusion \eqref{eq:incl}. Therefore, we can take 
$u_{k+1} \in {}^{w_{k+1}}U(\CK)$ and $g_{k+1} \in G(\CO)$
such that $y=u_{k}t^{\mu^{(1)}_{k}}=
u_{k+1}t^{\mu^{(1)}_{k+1}}g_{k+1}$; note that 
\begin{equation*}
g_{k+1} \in 
 T(\CK)({}^{w_{k+1}}U(\CK))({}^{w_{k}}U(\CK))T(\CK).
\end{equation*}
Here, since $w_{k+1}=w_{k}s_{i_{k+1}}$ by definition, 
it follows that
\begin{equation*}
T(\CK)({}^{w_{k+1}}U(\CK))({}^{w_{k}}U(\CK))T(\CK)
 = \dot{w}_{k}
 \bigl(
  \dot{s}_{i_{k+1}} U(\CK)\dot{s}_{i_{k+1}}U(\CK)
 \bigr) \dot{w}_{k}^{-1}
 \subset {}^{w_{k}}P_{i_{k+1}}(\CK),
\end{equation*}
and hence that $g_{k+1} \in {}^{w_{k}}P_{i_{k+1}}(\CK)$. 
Moreover, since $g_{k+1} \in G(\CO)$, we get 
$g_{k+1} \in {}^{w_{k}}P_{i_{k+1}}(\CK) \cap G(\CO)=
{}^{w_{k}}P_{i_{k+1}}(\CO)$. Therefore, we obtain 
\begin{equation*}
g_{k+1}\bb^{(2)}_{k} \subset 
{}^{w_{k}}P_{i_{k+1}}(\CO)\bb^{(2)}_{k}=
{}^{w_{k}}L_{i_{k+1}}(\CO)
{}^{w_{k}}U_{i_{k+1}}(\CO)\bb^{(2)}_{k}. 
\end{equation*}
Since the extremal MV cycle $\dot{w}_{k}^{-1}\bb^{(2)}_{k}$ is stable under 
$U_{i_{k+1}}(\CO) \subset U(\CO)=G(\CO) \cap U(\CK)$ 
(see the proof of Lemma~\ref{lem:stable}), we have
${}^{w_{k}}U_{i_{k+1}}(\CO)\bb^{(2)}_{k} \subset \bb^{(2)}_{k}$, 
and hence
\begin{equation*}
g_{k+1}\bb^{(2)}_{k} \subset 
{}^{w_{k}}L_{i_{k+1}}(\CO)\bb^{(2)}_{k}. 
\end{equation*}
Also, we see that 
\begin{align*}
{}^{w_{k}}L_{i_{k+1}}(\CO)
\bb^{(2)}_{k} 
& ={}^{w_{k+1}}L_{i_{k+1}}(\CO)\bb^{(2)}_{k}
  \quad \text{since $w_{k+1}=w_{k}s_{i_{k+1}}$ and 
  $\dot{s}_{i_{k+1}} \in L_{i_{k+1}}$} \\[3mm]
& \subset \bb^{(2)}_{k+1}
  \quad \text{by Lemma~\ref{lem:tran} 
  since $w_{k} < w_{k}s_{i_{k+1}}=w_{k+1}$}.
\end{align*}
Combining these, we obtain 
$g_{k+1}\bb^{(2)}_{k} \subset \bb^{(2)}_{k+1}$, 
and hence $g_{k+1}gG(\CO) \in g_{k+1}\bb^{(2)}_{k} 
\subset \bb^{(2)}_{k+1}$. 
Furthermore, we have 
\begin{align*}
[(y,\,gG(\CO))] & =
[(u_{k}t^{\mu^{(1)}_{k}},\,gG(\CO))]=
[(u_{k+1}t^{\mu^{(1)}_{k+1}}g_{k+1},\,gG(\CO))] \\ 
& = [(u_{k+1}t^{\mu^{(1)}_{k+1}},\,g_{k+1}gG(\CO))]
\end{align*}
by the equivalence relation 
$\sim$ on $G(\CK) \times \Gr$, and 
\begin{equation*}
yG(\CO)=u_{k+1}t^{\mu^{(1)}_{k+1}}G(\CO) 
 \in \bb^{(1)} \cap S^{w_{k+1}}_{\mu^{(1)}_{k+1}}
\end{equation*}
by the choice above of $y$. 
Consequently, we conclude that 
\begin{align*}
[(y,\,gG(\CO))] & \in 
\ol{ \mul \Bigl(
     \bigl(\bb^{(1)} \cap \BS^{w_{k+1}}_{ \mu^{(1)}_{k+1} }\bigr)^{\sim}
     \times^{{}^{w_{k+1}}U(\CO)} 
     \bb^{(2)}_{k+1}
     \Bigr)
} \\[3mm]
& = 
\ol{ \mul \Bigl(
     \bigl(\bb^{(1)} \cap \BS^{w_{k+1}}_{ \mu^{(1)}_{k+1} }\bigr)^{\sim}
     \times^{{}^{w_{k+1}}U(\CO)} 
     \bigl(\bb^{(2)}_{k+1} \cap \BS^{w_{k+1}}_{ \mu^{(2)}_{k+1} }\bigr)
     \Bigr) 
} \\[3mm]
& = \ol{ \bb^{(1)} \star_{k+1} \bb^{(2)} },
\end{align*}
since $\ol{ \bb^{(2)}_{k+1} \cap S^{w_{k+1}}_{ \mu^{(2)}_{k+1} } }=
\bb^{(2)}_{k+1}$ (see Remark~\ref{rem:extcyc}).
This proves the inclusion \eqref{eq:incl}, 
and hence the claim. \bqed

%%%%
\vsp
%%%%

Finally, we complete the proof of Theorem~\ref{thm:tensor}. 
By the claim above, we obtain
%
%%%%%%%%%%%%%%%
%%% eq:prf2 %%%
%%%%%%%%%%%%%%%
%
\begin{equation} \label{eq:prf2}
S^{w}_{\nu} \cap 
\bigl(\ol{\bb^{(1)} \stra{e} \bb^{(2)}}\bigr)
\subset 
S^{w}_{\nu} \cap 
\bigl(\ol{\bb^{(1)} \stra{w} \bb^{(2),w}}\bigr). 
\end{equation}
Also, we have 
%
%%%%%%%%%%%%%%%
%%% eq:prf4 %%%
%%%%%%%%%%%%%%%
%
\begin{equation} \label{eq:prf4}
S^{w}_{\nu} \cap 
\bigl(\ol{ \bb^{(1)} \stra{w} \bb^{(2),w} }\bigr)
\subset 	
S^{w}_{\nu} \cap 
\ol{S^{w}_{\mu^{(1)}_{w}+\mu^{(2)}_{w}}}
\end{equation}
by the inclusion $\bb^{(1)} \stra{w} \bb^{(2),w} \subset 
S^{w}_{\mu^{(1)}_{w}+\mu^{(2)}_{w}}$, which is 
an immediate consequence of the definition. 
Since $w^{-1} \cdot \nu \not\ge w^{-1} \cdot 
(\mu^{(1)}_{w}+\mu^{(2)}_{w})$ by \eqref{eq:prf1}, 
it follows from equation \eqref{eq:Snuw} that
$S^{w}_{\nu} \cap 
\ol{S^{w}_{\mu^{(1)}_{w}+\mu^{(2)}_{w}}}=\emptyset$,
and hence by \eqref{eq:prf2} together with \eqref{eq:prf4} that 
%
%%%%%%%%%%%%%%%
%%% eq:prf3 %%%
%%%%%%%%%%%%%%%
%
\begin{equation} \label{eq:prf3}
S^{w}_{\nu} \cap 
\bigl(\ol{\bb^{(1)} \stra{e} \bb^{(2)}}\bigr)=\emptyset.
\end{equation}

Now, we know from Theorem~\ref{thm:BG} that
\begin{equation*}
S_{\mu^{(1)}_{e}+\mu^{(2)}_{e}} \cap 
\mul(\Phi_{\lambda_{1},\,\lambda_{2}}(P_{1},\,P_{2})) 
\subset \Phi_{\lambda}(P)
\end{equation*}
is an open dense subset, where 
$\wt(P)=\mu^{(1)}_{e}+\mu^{(2)}_{e}$; 
for an MV polytope $P \in \mv(\lambda)$, choosing 
an embedding $\iota_{\lambda}:\mv(\lambda) \hookrightarrow 
\mv(\lambda_{2}) \otimes \mv(\lambda_{1})$ of crystals 
so that $\iota_{\lambda}(P)=P_{2} \otimes P_{1}$ 
for some $P_{1} \in \mv(\lambda_{1})$ and 
$P_{2} \in \mv(\lambda_{2})$ corresponds, 
via Theorems~\ref{thm:Kam1} and \ref{thm:BG}, to choosing 
an irreducible component 
$\CX \in \Irr(\Gr^{\lambda_{1},\,\lambda_{2}} \cap S_{\nu_{1},\,\nu_{2}})$ 
such that $\Phi_{\lambda}(P) \in \CZ(\lambda)$ is the (Zariski-) closure 
of the image of $\CX$ under the map $\star$ 
in the commutative diagram of Theorem~\ref{thm:BGc}
in the Appendix, where $\nu_{1}=\wt(P_{1})=\mu^{(1)}_{e}$ and 
$\nu_{2}=\wt(P_{2})=\mu^{(2)}_{e}$.
Also, we see from the explicit construction of 
$\Phi_{\lambda_{1},\,\lambda_{2}}(P_{1},\,P_{2})$ 
given in Theorem~\ref{thm:BG} that 
\begin{equation*}
\bb^{(1)} \stra{e} \bb^{(2)} 
\subset
S_{\mu^{(1)}_{e}+\mu^{(2)}_{e}} \cap 
\mul(\Phi_{\lambda_{1},\,\lambda_{2}}(P_{1},\,P_{2}))
\end{equation*}
is an open dense subset. Therefore, 
$\bb^{(1)} \stra{e} \bb^{(2)} \subset \Phi_{\lambda}(P)$ 
is an open dense subset. 
Combining this fact with \eqref{eq:prf3}, we conclude that 
$\Phi_{\lambda}(P) \cap S^{w}_{\nu}=\emptyset$, 
which contradicts \eqref{eq:prf1}. Thus, we have completed 
the proof of Theorem~\ref{thm:tensor}. 

%========================%
%     START SECTION A    %
%========================%
%
\appendix
\section{Appendix: On Braverman-Gaitsgory's bijection.}
\label{subsec:BGB}

\setcounter{subsection}{1}
\setcounter{equation}{0}

The aim of this appendix is to explain 
why Theorem~\ref{thm:BG} is a reformulation of results on 
tensor products of highest weight crystals in \cite{BrGa}. 
Keep the setting of \S\ref{subsec:BG}. 
In addition, we generally follow 
the notation of \cite[Chapter~8]{ChGi}.

For $\lambda \in X_{*}(T)_{+}$, 
we denote by $\IC_{\lambda}$ 
the intersection cohomology complex of $\ol{\Gr^{\lambda}}$ 
(and also its extension by zero to the whole $\Gr$). 
Similarly, for $\lambda_{1},\,\lambda_{2} \in X_{*}(T)_{+}$, 
we denote by $\IC_{\lambda_{1},\,\lambda_{2}}$ 
the intersection cohomology complex of 
$\ol{\Gr^{\lambda_{1}}} \twp \ol{\Gr^{\lambda_{2}}}$ 
(and also its extension by zero to the whole $\Gr \twp \Gr$).
%
%%%%%%%%%%%%%%
%%% thm:LD %%%
%%%%%%%%%%%%%%
%
\begin{thm}[\cite{Lu-Ast}, \cite{Gin}, \cite{MV2}, \cite{BeDr}] \label{thm:LD}
{\rm (1)} 
Let $\lambda \in X_{\ast}(T)_{+}$. Then, 
%
%%%%%%%%%%%%%%
%%% eq:fib %%%
%%%%%%%%%%%%%%
%
\begin{equation} \label{eq:fib}
 H^{\bullet} (\IC_{\lambda}) \cong L (\lambda) \qquad
 \text{\rm as $G^{\vee}$-modules}.
\end{equation}

{\rm (2)} 
Let $\lambda_{1},\,\lambda_{2} \in X_{\ast}(T)_{+}$.
The direct image $\bR^{\bullet} \mul_{*} 
\IC_{\lambda_{1},\,\lambda_{2}}$ is isomorphic 
to a direct sum of simple perverse sheaves as follows\,{\rm:}
%
%%%%%%%%%%%%%%%%
%%% eq:conv2 %%%
%%%%%%%%%%%%%%%%
%
\begin{equation} \label{eq:conv2}
\bR^{\bullet} \mul_{*} \IC_{\lambda_{1},\,\lambda_{2}} \cong 
 \bigoplus_{\lambda \in X_{*}(T)_{+}} 
   \BC^{m^{\lambda}_{\lambda_{1},\,\lambda_{2}}} \boxtimes 
   \IC_{\lambda},
\end{equation}
where $\BC^{m^{\lambda}_{\lambda_{1},\,\lambda_{2}}}$ denotes 
the vector space of dimension 
$m^{\lambda}_{\lambda_{1},\,\lambda_{2}} \in \BZ_{\ge 0}$ 
over $\BC$ for $\lambda \in X_{*}(T)_{+}$.

{\rm (3)} Let $\CP$ be the full subcategory of 
the category of perverse sheaves on $\Gr$ whose objects are 
direct sums of $\IC_{\lambda}$'s. Then, the assignment
\begin{equation*}
(\IC_{\lambda_{1}},\,\IC_{\lambda_{2}}) \mapsto 
 \IC_{\lambda_{1},\,\lambda_{2}} \mapsto 
 \bR^{\bullet} \mul_{*} \IC_{\lambda_{1},\,\lambda_{2}}
\end{equation*}
defines on $\CP$ the structure of a tensor category 
with a fiber functor $\IC_{\lambda} \mapsto H^{\bullet} (\IC_{\lambda})$.

{\rm (4)} We have an equivalence 
$\CP \cong \Rep(G^{\vee})$ of tensor categories with fiber functors, 
where $\Rep(G^{\vee})$ denotes the category of finite-dimensional 
rational representations of $G^{\vee}$.
\end{thm}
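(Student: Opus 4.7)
The plan is to establish parts (1)--(4) as a single package, following the Tannakian strategy pioneered by Lusztig, Ginzburg, and Mirkovi\'c-Vilonen, using the semi-smallness recorded in Proposition~\ref{prop:mv-44} together with the Beilinson-Bernstein-Deligne (BBD) decomposition theorem as the main inputs.

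For part (2), I would start with the semi-smallness of $\mul\colon \ol{\Gr^{\lambda_{1}}} \twp \ol{\Gr^{\lambda_{2}}} \to \ol{\Gr^{\lambda_{1}+\lambda_{2}}}$. This immediately implies that $\bR\mul_{\ast}\IC_{\lambda_{1},\,\lambda_{2}}$ is pure perverse, and the BBD decomposition theorem combined with $G(\CO)$-equivariance forces the simple summands to be of the form $\IC_{\lambda}$, yielding \eqref{eq:conv2} with non-negative integer multiplicities $m^{\lambda}_{\lambda_{1},\,\lambda_{2}}$.

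For part (1), I would compute the weight decomposition of $H^{\bullet}(\IC_{\lambda})$ (graded by the character lattice of $T^{\vee}$, which is $X_{\ast}(T)$) via the Iwasawa decomposition $\Gr=\bigsqcup_{\nu} S_{\nu}$, viewing the total cohomology as the direct sum of hyperbolic restrictions $\bigoplus_{\nu} H^{\bullet}_{c}(S_{\nu},\,\IC_{\lambda})$. The key technical input is the dimension formula $\dim(\Gr^{\lambda} \cap S_{\nu})=\pair{\lambda-\nu}{\rho}$ recalled in \S\ref{subsec:geom}, combined with a parity vanishing for stalks and costalks of $\IC_{\lambda}$ along the $S_{\nu}$-strata; together they concentrate $H^{\bullet}_{c}(S_{\nu},\,\IC_{\lambda})$ in a single cohomological degree with basis canonically indexed by $\CZ(\lambda)_{\nu}$. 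This yields the $T^{\vee}$-character of $L(\lambda)$, so $H^{\bullet}(\IC_{\lambda}) \cong L(\lambda)$ as $T^{\vee}$-modules, and the upgrade to a $G^{\vee}$-module isomorphism is deduced jointly with parts (3) and (4).

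For parts (3) and (4), I would use (2) to make the convolution $\IC_{\lambda_{1}} \star \IC_{\lambda_{2}} := \bR\mul_{\ast}\IC_{\lambda_{1},\,\lambda_{2}}$ into a well-defined bifunctor on $\CP$. Associativity is formal from the threefold twisted product $\Gr \twp \Gr \twp \Gr$; the K\"unneth formula together with semi-smallness provide a canonical isomorphism $H^{\bullet}(\IC_{\lambda_{1}} \star \IC_{\lambda_{2}}) \cong H^{\bullet}(\IC_{\lambda_{1}}) \otimes H^{\bullet}(\IC_{\lambda_{2}})$, making $H^{\bullet}$ a fiber functor, with exactness following from the parity vanishing used in (1). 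Tannakian reconstruction then produces an affine algebraic group $\widetilde{G}^{\vee}$ with $\CP \cong \Rep(\widetilde{G}^{\vee})$, and a comparison of root data---the lattice $X_{\ast}(T)$ arising from (1), the indexing of simple objects by $X_{\ast}(T)_{+}$ coming from the Cartan decomposition, and the simple coroots read off from minuscule or adjoint $\lambda$---identifies $\widetilde{G}^{\vee}$ with $G^{\vee}$. The hardest step, and the main obstacle, is the commutativity constraint: the naive swap $\IC_{\lambda_{1}}\star\IC_{\lambda_{2}} \leftrightarrow \IC_{\lambda_{2}}\star\IC_{\lambda_{1}}$ is not realized by any obvious symmetry of $\Gr \twp \Gr$, and one must either invoke the Beilinson-Drinfeld Grassmannian on a curve (fusion product) to obtain a canonical symmetric braiding, or execute the sign-modified commutativity of Mirkovi\'c-Vilonen to correct the natural super-commutativity coming from cohomological degrees to an honest symmetric structure.
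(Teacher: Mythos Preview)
The paper does not give its own proof of this theorem: it is stated with attributions to \cite{Lu-Ast}, \cite{Gin}, \cite{MV2}, and \cite{BeDr} and is used as a black box in the Appendix. Your sketch is a faithful outline of the standard Tannakian proof found in those references (semi-smallness plus BBD for part~(2), hyperbolic restriction along the $S_{\nu}$-strata with the dimension and parity inputs for part~(1), and Tannakian reconstruction with the Beilinson--Drinfeld fusion or Mirkovi\'c--Vilonen sign-corrected commutativity for parts~(3) and~(4)), so there is nothing to compare against and no gap to flag.
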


Let us fix arbitrarily 
$\lambda_{1},\,\lambda_{2} \in X_{*}(T)_{+}$, and 
$\nu_{1},\,\nu_{2} \in X_{*}(T)$. 
In the sequel, we assume that 
$\Gr^{\lambda_{i}} \cap S_{\nu_{i}} \ne \emptyset$ for $i=1,\,2$, 
i.e., $\nu_{1},\,\nu_{2} \in X_{*}(T)$ are weights of 
the $G^{\vee}$-modules $L(\lambda_{1})$ and $L(\lambda_{2})$, 
respectively; namely, we assume that $\nu_{i} \in \Omega(\lambda_{i})$ 
for $i=1,\,2$ in the notation of \S\ref{subsec:geom}. 
For simplicity of notation, we set
\begin{equation*}
S_{\nu_{1},\,\nu_{2}} := S_{\nu_{1},\,\nu_{2}}^{e}, \quad
\Gr^{\lambda_{1},\,\lambda_{2}}:=
\Gr^{\lambda_{1}} \twp \Gr^{\lambda_{2}}.
\end{equation*}
The following Lemma is essentially due to 
Ngo and Polo \cite[Corollary~9.2]{NP}; 
the assumption of this corollary can be dropped 
by using \cite[Lemme~9.1]{NP} along with 
\cite[Theorem~3.2\,(i)]{MV2}. 
%
%%%%%%%%%%%%%%%%%%
%%% lem:dimint %%%
%%%%%%%%%%%%%%%%%%
%
\begin{lem} \label{lem:dimint}
Keep the setting above. 
The intersection
$\Gr^{\lambda_{1},\,\lambda_{2}} \cap S_{\nu_{1},\,\nu_{2}}$
is a union of irreducible components of dimension 
$\pair{\lambda_{1}+\lambda_{2}-(\nu_{1}+\nu_{2})}{\rho}$.
\end{lem}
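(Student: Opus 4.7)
The plan is to analyze the first projection $\pi_{1}: \Gr^{\lambda_{1},\lambda_{2}} \cap S_{\nu_{1},\nu_{2}} \to \Gr^{\lambda_{1}} \cap S_{\nu_{1}}$, showing that it is Zariski-locally a trivial fibration with typical fiber $\Gr^{\lambda_{2}} \cap S_{\nu_{2}}$; the desired pure-dimensionality then follows from the corresponding statement for each factor, due to Mirkovi\'c-Vilonen.

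First, I would invoke the Zariski-local triviality of the $G(\CO)$-bundle $\pi:G(\CK) \twoheadrightarrow \Gr$ to obtain, over a sufficiently small open $V \subset \Gr$, a section $s:V \to G(\CK)$ yielding a trivialization $\pi_{1}^{-1}(V) \cong V \times \Gr$ via $[(s(v)g,\,[y])] \mapsto (v,\,g[y])$. Under this trivialization, the subvariety $\Gr^{\lambda_{1},\lambda_{2}} \cap \pi_{1}^{-1}(V)$ corresponds to $(V \cap \Gr^{\lambda_{1}}) \times \Gr^{\lambda_{2}}$ by definition of the twisted product. Next, arranging the section so that $s(v) \in U(\CK)t^{\nu_{1}}$ for $v \in V \cap S_{\nu_{1}}$ (which is possible because $S_{\nu_{1}} = U(\CK) \cdot [t^{\nu_{1}}]$ admits Zariski-local sections of $U(\CK)t^{\nu_{1}} \twoheadrightarrow S_{\nu_{1}}$), a direct calculation using the defining equivalence on $\Gr \twp \Gr$ together with $U(\CO) = U(\CK) \cap G(\CO)$ shows that $[(s(v)g,\,[y])]$ lies in $S_{\nu_{1},\nu_{2}}$ if and only if $g[y] \in S_{\nu_{2}}$. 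Hence the trivialization restricts to
\begin{equation*}
\Gr^{\lambda_{1},\lambda_{2}} \cap S_{\nu_{1},\nu_{2}} \cap \pi_{1}^{-1}(V)
\ \cong \
\bigl(V \cap \Gr^{\lambda_{1}} \cap S_{\nu_{1}}\bigr) \times \bigl(\Gr^{\lambda_{2}} \cap S_{\nu_{2}}\bigr).
\end{equation*}

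Finally, the Mirkovi\'c-Vilonen theorem (\cite[Theorem~3.2 and Remark~3.3]{MV2}) recalled in \S\ref{subsec:geom} asserts that each factor $\Gr^{\lambda_{i}} \cap S_{\nu_{i}}$ has pure dimension $\pair{\lambda_{i}-\nu_{i}}{\rho}$; since products of pure-dimensional varieties over $\BC$ remain pure-dimensional with additive dimensions, the local product description forces every irreducible component of $\Gr^{\lambda_{1},\lambda_{2}} \cap S_{\nu_{1},\nu_{2}}$ to have dimension $\pair{\lambda_{1}+\lambda_{2}-(\nu_{1}+\nu_{2})}{\rho}$. The main obstacle, requiring the most technical care, is the construction of the Zariski-local section with values in $U(\CK)t^{\nu_{1}}$ over $S_{\nu_{1}}$ (together with the compatibility verification with $S_{\nu_{1},\nu_{2}}$), as one must be mindful that $G(\CK)$ is an ind-scheme rather than an ordinary variety; these issues are handled by Ngo-Polo in their closely related Corollary~9.2 of \cite{NP}, on which this lemma is essentially based.
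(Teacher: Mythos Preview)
Your proposal is correct and follows essentially the same route as the paper: the paper does not give an independent argument but simply attributes the lemma to Ng\^o--Polo \cite[Corollary~9.2]{NP}, noting that their hypothesis can be removed via \cite[Lemme~9.1]{NP} together with \cite[Theorem~3.2\,(i)]{MV2}. What you have written is precisely an unpacking of that recipe---the Zariski-local product decomposition over $S_{\nu_1}$ is the content of \cite[Lemme~9.1]{NP}, and the pure-dimensionality input for each factor is \cite[Theorem~3.2]{MV2}---so your argument and the paper's citation amount to the same proof.
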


By virtue of \eqref{eq:Snn} and Lemma~\ref{lem:dimint}, 
we can imitate all the constructions in 
\cite[Theorem~3.5 and Proposition~3.10]{MV2} to obtain:
%
%%%%%%%%%%%%%%%%
%%% thm:MVBG %%%
%%%%%%%%%%%%%%%%
%
\begin{thm}[{cf. \cite[\S3]{MV2}; see also \cite{BrGa}}] \label{thm:MVBG}
Keep the setting above. 

{\rm (1)}
The cohomology group 
$H^{k}_{c} (S_{\nu_{1},\,\nu_{2}},\,\IC_{\lambda_{1},\,\lambda_{2}})$ 
vanishes except for $k=-2\pair{\nu_{1}+\nu_{2}}{\rho}$. 

{\rm (2)} 
There is an isomorphism of vector spaces
\begin{equation*}
H^{-2\pair{\nu_{1}+\nu_{2}}{\rho}}_{c} 
(S_{\nu_{1},\,\nu_{2}},\,\IC_{\lambda_{1},\,\lambda_{2}}) 
\cong 
\BC \Irr \bigl(
 \Gr^{\lambda_{1},\,\lambda_{2}} \cap S_{\nu_{1},\,\nu_{2}}
 \bigr).
\end{equation*}
\end{thm}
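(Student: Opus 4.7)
The plan is to follow the strategy of \cite[\S 3]{MV2} essentially verbatim, replacing the single factor $\Gr$ by the twisted product $\Gr \twp \Gr$. First, I would use the one-parameter subgroup $t \in T(\BR)$ fixed in \S\ref{subsec:BG}, and invoke \eqref{eq:Snn} to characterize $S_{\nu_{1},\nu_{2}}$ as the attracting set in $\Gr \twp \Gr$ for the induced $\mathbb{G}_{m}$-action, with unique fixed point $(t^{\nu_{1}}, [t^{\nu_{2}}])$. The complex $\IC_{\lambda_{1},\lambda_{2}}$ is $T$-equivariant and pure, since it is the intersection cohomology complex of the $G(\CO)$-stable projective variety $\ol{\Gr^{\lambda_{1}}} \twp \ol{\Gr^{\lambda_{2}}}$.

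For part {\rm (1)}, I would reproduce the weight/parity argument of \cite[Theorem~3.5]{MV2}: the purity of $\IC_{\lambda_{1},\lambda_{2}}$, together with the fact that the projection from $S_{\nu_{1},\nu_{2}}$ to its unique $\mathbb{G}_{m}$-fixed point is locally an affine bundle of the expected rank, forces $H^{k}_{c}(S_{\nu_{1},\nu_{2}}, \IC_{\lambda_{1},\lambda_{2}})$ to vanish outside the single degree $k = -2\pair{\nu_{1}+\nu_{2}}{\rho}$. The concentration degree is pinned down by the middle-perversity shift together with the codimension of the fixed point, exactly as in the single-factor case.

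For part {\rm (2)}, I would combine part {\rm (1)} with Lemma~\ref{lem:dimint}. Since $\Gr^{\lambda_{1},\lambda_{2}}$ is the smooth open dense stratum of $\ol{\Gr^{\lambda_{1}}} \twp \ol{\Gr^{\lambda_{2}}}$, the restriction of $\IC_{\lambda_{1},\lambda_{2}}$ there is the constant sheaf shifted into cohomological degree $-2\pair{\lambda_{1}+\lambda_{2}}{\rho}$. By the support condition for middle perversity, the contribution to $H^{-2\pair{\nu_{1}+\nu_{2}}{\rho}}_{c}(S_{\nu_{1},\nu_{2}}, \IC_{\lambda_{1},\lambda_{2}})$ from the complement $\ol{\Gr^{\lambda_{1}}} \twp \ol{\Gr^{\lambda_{2}}} \setminus \Gr^{\lambda_{1},\lambda_{2}}$ vanishes in this top degree, so the group equals the top compactly supported cohomology of $\Gr^{\lambda_{1},\lambda_{2}} \cap S_{\nu_{1},\nu_{2}}$. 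Since Lemma~\ref{lem:dimint} asserts that this intersection is a union of irreducible components of the top dimension $\pair{\lambda_{1}+\lambda_{2}-(\nu_{1}+\nu_{2})}{\rho}$, its top compactly supported cohomology has a canonical basis indexed by $\Irr(\Gr^{\lambda_{1},\lambda_{2}} \cap S_{\nu_{1},\nu_{2}})$, which yields {\rm (2)}.

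The step that I expect to require the most care is verifying that the attracting geometry inside the twisted product $\Gr \twp \Gr$ is well-behaved enough for the MV weight argument to transport: one needs that $S_{\nu_{1},\nu_{2}}$ is an ind-variety whose attraction to $(t^{\nu_{1}}, [t^{\nu_{2}}])$ is locally an iterated affine bundle, and that the stratification of $\Gr \twp \Gr$ interacts with $\ol{\Gr^{\lambda_{1}}} \twp \ol{\Gr^{\lambda_{2}}}$ in the expected dimensional fashion. However, since $\pi_{1} : \Gr \twp \Gr \twoheadrightarrow \Gr$ is locally trivial with fiber $\Gr$, both inputs reduce in local charts to the corresponding single-factor statements in \cite{MV2}, so the transcription should be essentially mechanical.
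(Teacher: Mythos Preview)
Your proposal is correct and matches the paper's approach: the paper does not give a detailed proof but simply states that, by virtue of \eqref{eq:Snn} and Lemma~\ref{lem:dimint}, one can imitate the constructions in \cite[Theorem~3.5 and Proposition~3.10]{MV2}. You have unpacked precisely those two ingredients---the attracting-set description of $S_{\nu_{1},\nu_{2}}$ and the equidimensionality of $\Gr^{\lambda_{1},\lambda_{2}} \cap S_{\nu_{1},\nu_{2}}$---and explained how the single-factor argument transports, which is more detail than the paper itself provides.
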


The following result is implicit 
in \cite[\S8]{A} and \cite{BrGa}.
%
%%%%%%%%%%%%%%%
%%% thm:BGc %%%
%%%%%%%%%%%%%%%
%
\begin{thm} \label{thm:BGc}
Keep the setting above. 
For each $\nu \in X_{*}(T)$, 
there exists the following commutative diagram\,{\rm:}

\newcommand{\diagv}{%
${\displaystyle\bigoplus_{
  \nu_{1}+\nu_{2}=\nu }
  H^{-2\pair{\nu}{\rho}}_{c} 
  (S_{\nu_{1},\,\nu_{2}},\,\IC_{\lambda_{1},\,\lambda_{2}})}$
}

\newcommand{\diagw}{%
${\displaystyle\bigoplus_{
  \nu_{1}+\nu_{2}=\nu}
 \BC \Irr 
 \bigl(%
   \Gr^{\lambda_{1},\,\lambda_{2}} \cap S_{\nu_{1},\,\nu_2}
 \bigr)}$
}

\newcommand{\diagx}{%
$H^{-2\pair{\nu}{\rho}}_{c}
(S_{\nu},\,\bR^{\bullet}\mul_{!}\IC_{\lambda_{1},\,\lambda_{2}})$}

\newcommand{\diagy}{%
$\disp{\bigoplus_{\lambda \in X_{*}(T)_{+}}}
 H^{-2\pair{\nu}{\rho}}_{c} (S_{\nu},\,\IC_{\lambda})$.}
\newcommand{\diagz}{%
$\disp{\bigoplus_{\lambda \in X_{*}(T)_{+}}}
 \BC\Irr (\Gr^{\lambda} \cap S_{\nu})$}

\vspace*{5mm}

\hspace*{15mm}
%WinTpicVersion3.08
\unitlength 0.1in
\begin{picture}( 42.7500, 27.9500)( 11.3000,-33.1500)
% STR 2 0 3 0
% 3 1400 800 1400 900 5 0
% \diagv
\put(14.0000,-9.0000){\makebox(0,0){\diagv}}%
% STR 2 0 3 0
% 3 1400 2100 1400 2200 5 0
% \diagx
\put(14.0000,-22.0000){\makebox(0,0){\diagx}}%
% STR 2 0 3 0
% 3 3400 3300 3400 3400 5 0
% \diagy
\put(34.0000,-34.0000){\makebox(0,0){\diagy}}%
% STR 2 0 3 0
% 3 5400 2100 5400 2200 5 0
% \diagz
\put(54.0000,-22.0000){\makebox(0,0){\diagz}}%
% STR 2 0 3 0
% 3 5400 800 5400 900 5 0
% \diagw
\put(54.0000,-9.0000){\makebox(0,0){\diagw}}%
% VECTOR 2 0 3 0
% 2 1400 1200 1400 2000
% 
\special{pn 8}%
\special{pa 1400 1200}%
\special{pa 1400 2000}%
\special{fp}%
\special{sh 1}%
\special{pa 1400 2000}%
\special{pa 1420 1934}%
\special{pa 1400 1948}%
\special{pa 1380 1934}%
\special{pa 1400 2000}%
\special{fp}%
% VECTOR 2 0 3 0
% 2 5400 1200 5400 2000
% 
\special{pn 8}%
\special{pa 5400 1200}%
\special{pa 5400 2000}%
\special{fp}%
\special{sh 1}%
\special{pa 5400 2000}%
\special{pa 5420 1934}%
\special{pa 5400 1948}%
\special{pa 5380 1934}%
\special{pa 5400 2000}%
\special{fp}%
% VECTOR 2 0 3 0
% 2 1400 2400 2800 3100
% 
\special{pn 8}%
\special{pa 1400 2400}%
\special{pa 2800 3100}%
\special{fp}%
\special{sh 1}%
\special{pa 2800 3100}%
\special{pa 2750 3052}%
\special{pa 2752 3076}%
\special{pa 2732 3088}%
\special{pa 2800 3100}%
\special{fp}%
% VECTOR 2 0 3 0
% 2 4000 3100 5400 2400
% 
\special{pn 8}%
\special{pa 4000 3100}%
\special{pa 5400 2400}%
\special{fp}%
\special{sh 1}%
\special{pa 5400 2400}%
\special{pa 5332 2412}%
\special{pa 5352 2424}%
\special{pa 5350 2448}%
\special{pa 5400 2400}%
\special{fp}%
% STR 2 0 3 0
% 3 1600 1500 1600 1600 5 0
% $\cong$
\put(16.0000,-16.0000){\makebox(0,0){$\cong$}}%
% STR 2 0 3 0
% 3 3400 505 3400 605 5 0
% $\cong$
\put(34.0000,-6.0500){\makebox(0,0){$\cong$}}%
% STR 2 0 3 0
% 3 5600 1500 5600 1600 5 0
% $\star$
\put(56.0000,-16.0000){\makebox(0,0){$\star$}}%
% STR 2 0 3 0
% 3 2000 2700 2000 2800 4 0
% $\tr$
\put(20.0000,-28.0000){\makebox(0,0)[rt]{$\tr$}}%
% STR 2 0 3 0
% 3 4800 2700 4800 2800 1 0
% $\cong$
\put(48.0000,-28.0000){\makebox(0,0)[lt]{$\cong$}}%
% CIRCLE 2 0 3 0
% 4 3400 2000 3400 1400 3800 2600 3000 2600
% 
\special{pn 8}%
\special{ar 3400 2000 600 600  2.1587989 6.2831853}%
\special{ar 3400 2000 600 600  0.0000000 0.9827937}%
% SARROW 2 0 3 1
% 2 3745 2491 3733 2499
% 
\special{pn 8}%
\special{pa 3746 2492}%
\special{pa 3734 2500}%
\special{fp}%
\special{sh 1}%
\special{pa 3734 2500}%
\special{pa 3800 2480}%
\special{pa 3778 2470}%
\special{pa 3778 2446}%
\special{pa 3734 2500}%
\special{fp}%
% VECTOR 2 0 3 0
% 2 2600 805 4200 805
% 
\special{pn 8}%
\special{pa 2600 806}%
\special{pa 4200 806}%
\special{fp}%
\special{sh 1}%
\special{pa 4200 806}%
\special{pa 4134 786}%
\special{pa 4148 806}%
\special{pa 4134 826}%
\special{pa 4200 806}%
\special{fp}%
\end{picture}%

\vspace*{7mm}

\noindent
Here, $\tr$ is the map obtained by summing up 
the isotypical component of 
$\IC_{\lambda}$ for each $\lambda \in X_{*}(T)_{+}$ inside 
$\mul_{!}\IC_{\lambda_{1},\,\lambda_{2}}$ 
via Theorem~\ref{thm:LD}\,(2), and 
$\star$ is the map induced by $\mul$.
\end{thm}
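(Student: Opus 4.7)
The plan is to identify each arrow with a known map, establish the three isomorphisms in the diagram, and then verify commutativity by a fundamental-class chase.

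First, I would identify the horizontal and diagonal isomorphisms. The top horizontal arrow is the direct sum over decompositions $\nu=\nu_1+\nu_2$ of the isomorphisms in Theorem~\ref{thm:MVBG}(2); note that $-2\pair{\nu_1+\nu_2}{\rho}=-2\pair{\nu}{\rho}$, so all summands live in the correct cohomological degree. The right diagonal $\bigoplus_\lambda\BC\Irr(\Gr^\lambda\cap S_\nu)\cong\bigoplus_\lambda H_c^{-2\pair{\nu}{\rho}}(S_\nu,\IC_\lambda)$ is the classical Mirkovi\'c-Vilonen isomorphism from \cite[Theorem~3.5]{MV2} (the one-factor special case of Theorem~\ref{thm:MVBG}(2)), summed over $\lambda$. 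The left vertical isomorphism is obtained by applying proper base change along the Cartesian square cutting out $\mul^{-1}(S_\nu)$: one gets $H_c^\bullet(S_\nu,\bR^\bullet\mul_!\IC_{\lambda_1,\lambda_2})\cong H_c^\bullet(\mul^{-1}(S_\nu),\IC_{\lambda_1,\lambda_2})$, and Lemma~\ref{lem:invS} expresses this as the required direct sum. The map $\tr$ is the projection onto the $\IC_\lambda$-isotypic components in Theorem~\ref{thm:LD}(2); it is an isomorphism because $\mul$ restricted to $\ol{\Gr^{\lambda_1}}\twp\ol{\Gr^{\lambda_2}}$ is projective and semi-small by Proposition~\ref{prop:mv-44}, so $\mul_!=\mul_*$ on these closures and Theorem~\ref{thm:LD}(2) applies.

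Next, I would construct the map $\star$. Given $\CX\in\Irr(\Gr^{\lambda_1,\lambda_2}\cap S_{\nu_1,\nu_2})$, the semi-smallness of $\mul$ together with the dimension count of Lemma~\ref{lem:dimint} forces $\dim\ol{\mul(\CX)}\le\pair{\lambda-\nu}{\rho}$ for any $\lambda\in X_*(T)_+$ such that $\mul(\CX)$ meets $\Gr^\lambda$. When equality holds, the top-dimensional irreducible components of $\ol{\mul(\CX)}\cap\Gr^\lambda$ are genuine MV cycles in $\CZ(\lambda)_\nu$, and we set $\star([\CX])$ to be the sum of the corresponding basis vectors, weighted by the generic degree of $\mul$ on $\CX$; otherwise $\star([\CX])=0$. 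This is the unique map that can sit in the diagram compatibly with the other arrows.

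Finally, commutativity: tracing $[\CX]$ down the left column and across gives the pushforward under $\mul_!$ of its fundamental class, which, via the isotypic decomposition of Theorem~\ref{thm:LD}(2) and the single-factor Mirkovi\'c-Vilonen isomorphism on the right, equals the sum of fundamental classes of the top-dimensional pieces of $\ol{\mul(\CX)}\cap\Gr^\lambda$; tracing $[\CX]$ along the top and then down through $\star$ yields the same sum by construction. The main obstacle is precisely this last verification, namely the compatibility of the direct-summand projections in the Satake picture with fundamental-class pushforward under a semi-small morphism; this compatibility is implicit in \cite[\S8]{A} and in \cite{BrGa}, and follows from the standard fact that a semi-small projective map sends the fundamental class of a top-dimensional irreducible component to a multiple of the fundamental class of its image, combined with the explicit identification of $\IC_\lambda$-isotypic pieces inside $\bR^\bullet\mul_*\IC_{\lambda_1,\lambda_2}$ furnished by the geometric Satake equivalence of Theorem~\ref{thm:LD}(4).
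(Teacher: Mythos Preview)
Your outline is essentially correct, but the route differs from the paper's in a meaningful way.  The paper does \emph{not} invoke proper base change on $\IC$ sheaves and then split via parity.  Instead it passes to the \emph{constant} sheaf on the open stratum $\Gr^{\lambda_1,\lambda_2}$, sets up the Leray spectral sequence
\[
E_{2}^{p,q}=H^{q}_{c}\bigl(S_{\nu}\cap\ol{\Gr^{\lambda_1+\lambda_2}},\,R^{p}\mul'_{!}\,\ud{\BC}\bigr)\ \Rightarrow\ \bigoplus_{\nu_1+\nu_2=\nu}H^{p+q}_{c}\bigl(S_{\nu_1,\nu_2}\cap\Gr^{\lambda_1,\lambda_2},\,\ud{\BC}\bigr),
\]
and shows degeneration at $E_2$ in the top degree by a dimension count.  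Two inputs you do not mention are essential there: (a) simple-connectedness of each $\Gr^{\lambda}$, which forces $R^{\bullet}\mul'_{!}\,\ud{\BC}\big|_{\Gr^{\lambda}}$ to split as a sum of constant sheaves (rather than local systems), and (b) the identification, borrowed from \cite[\S3.4]{BrGa}, of the number of top-dimensional fibre components over $\Gr^{\lambda}$ with the tensor multiplicity $m^{\lambda}_{\lambda_1,\lambda_2}$.  Only after this constant-sheaf diagram is built does the paper replace $\ud{\BC}$ by the shifted $\IC$ complexes.  So the paper's left vertical isomorphism is an $E_2$-degeneration statement, whereas yours is base change plus parity splitting; these are equivalent, but the paper's version is more explicit and produces the $\tr$ map concretely by normalising fundamental classes of fibre components to $1$.

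Two places in your write-up need tightening.  First, ``Lemma~\ref{lem:invS} expresses this as the required direct sum'' is not automatic: the strata $S_{\nu_1,\nu_2}$ are only locally closed in $\mul^{-1}(S_\nu)$, and you must invoke the vanishing in Theorem~\ref{thm:MVBG}\,(1) to split the resulting long exact sequences.  Second, your dimension bound on $\ol{\mul(\CX)}$ is correct but the justification is not semi-smallness; it is simply that $\mul(\CX)\subset S_\nu$ and $\dim(\Gr^{\lambda}\cap S_\nu)=\pair{\lambda-\nu}{\rho}$.  Semi-smallness is what you need instead to argue that the cycle-theoretic pushforward lands in the right degree and matches the $\IC$-isotypic projection, i.e.\ for the commutativity step you flag as the main obstacle.
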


\begin{proof}[(Sketch of) Proof]
We write simply $\ud{\BC}$ for 
the constant sheaf $\BC_{\CX}$ of an algebraic variety $\CX$. 
Consider the restriction of the multiplication map 
$\mul: \Gr \twp \Gr \rightarrow \Gr$ (see Proposition~\ref{prop:mv-44}) 
to $\Gr^{\lambda_{1},\,\lambda_{2}}$, and write it as: 
\begin{equation*}
\mul':\Gr^{\lambda_{1},\,\lambda_{2}} \rightarrow \ol{\Gr^{\lambda_{1}+\lambda_{2}}}.
\end{equation*}
Then, there exists the Leray spectral sequence
%
%%%%%%%%%%%%%%%%
%%% eq:leray %%%
%%%%%%%%%%%%%%%%
%
\begin{equation} \label{eq:leray}
E_{2}^{p,q} := 
 H^{q}_{c} 
 (S_{\nu} \cap \ol{\Gr^{\lambda_{1}+\lambda_{2}}},\,\bR^{p}\mul_{!}'\ud{\BC}) 
 \hspace{7pt} \Rightarrow 
 \bigoplus_{\nu_{1}+\nu_{2}=\nu}
 H^{q+p}_{c}(
  S_{\nu_{1},\,\nu_{2}} \cap \Gr^{\lambda_{1},\,\lambda_{2}},\,\ud{\BC}).
\end{equation}
Here, from Theorem~\ref{thm:MVBG}\,(2) 
together with Theorem~\ref{thm:LD}\,(3), 
we see that each direct summand of the right-hand side of \eqref{eq:leray}, 
with $q+p=2\pair{\lambda_{1}+\lambda_{2}-\nu}{\rho}$, is isomorphic to 
the tensor product of the $\nu_{1}$-weight space of $L(\lambda_{1})$ 
and the $\nu_{2}$-weight space of $L(\lambda_{2})$. 
Also, note that the map $\mul'$ above is a $G(\CO)$-equivalent fibration. 
Hence, by virtue of the simply-connectedness of $\Gr^{\lambda}$
for $\lambda \in X_{\ast}(T)_{+}$, we conclude that the proper direct 
image $R^{\bullet}\mul_{!}'\ud{\BC}$, when restricted to $\Gr^{\lambda}$
with $\lambda \le \lambda_{1}+\lambda_{2}$, 
decomposes into a direct sum of constant sheaves (with degree shifts). 
Moreover, we know from \cite[\S3.4]{BrGa} that the number of irreducible components 
of (generic) fiber of an element of $\CZ(\lambda)_{\nu}$ with its dimension 
$\pair{\lambda_{1}+\lambda_{2}-\lambda}{\rho}$ along $\mul'$ is 
$m^{\lambda}_{\lambda_{1},\,\lambda_{2}}$ for $\lambda \in X_{\ast}(T)_{+}$
(see Theorem~\ref{thm:LD}\,(2)); 
note that this is the largest possible dimension of 
an irreducible component of (generic) fiber of an element 
of $\CZ(\lambda)_{\nu}$ by Lemma~\ref{lem:dimint}.
Combining all these, we deduce that
\begin{equation*}
\sum_{p+q=2\pair{\lambda_{1}+\lambda_{2}-\nu}{\rho}} \dim E_{2}^{p,q}
=
\sum_{\nu_{1}+\nu_{2}=\nu}
\dim H^{2\pair{\lambda_{1}+\lambda_{2}-\nu}{\rho}}_{c}(
  S_{\nu_{1},\,\nu_{2}} \cap \Gr^{\lambda_{1},\,\lambda_{2}},\,\ud{\BC}).
\end{equation*}
As a result, the spectral sequence $(E_{r}^{p,q})$ stabilizes at $E_{2}$-terms
when $q+p=2\pair{\lambda_{1}+\lambda_{2}-\nu}{\rho}$ (top degree cohomology 
groups). In addition, the stalk 
$(\bR^{p} \mul_{!}' \ud{\BC})_{x}$ at $x \in \Gr^{\lambda}$ 
for $\lambda \in X_{*}(T)_{+}$ vanishes whenever
$p > 2 \pair{\lambda_{1}+\lambda_{2} - \lambda}{\rho}$ 
since the inequality
\begin{equation*}
\pair{\lambda_{1}+\lambda_{2} - \lambda}{\rho} \ge 
\dim \left( 
 \mul^{-1} (x) \cap 
 \left(\ol{\Gr^{\lambda_{1}}} \twp \ol{\Gr^{\lambda_{2}}}\right)
\right)
\end{equation*}
holds by Proposition~\ref{prop:mv-44}. It follows that
\begin{equation*}
H^{q}_{c} 
(S_{\nu} \cap \ol{\Gr^{\lambda_{1}+\lambda_{2}}},\,
 \bR^{p} \mul_{!}' \ud{\BC}) \cong
\bigoplus_{
 \begin{subarray}{c}
 \lambda \in X_{*}(T)_{+} \\[0.5mm]
 2 \pair{\lambda_{1}+\lambda_{2} - \lambda}{\rho} = p
 \end{subarray}} 
H^{q}_{c} 
(S_{\nu} \cap \Gr^{\lambda},\,
 \bR^{p} \mul_{!}' \ud{\BC})
\end{equation*}
for each $p,\,q \in \BZ$ with 
$p = 2 \pair{\lambda_{1}+\lambda_{2} - \nu}{\rho} - q$. 
Consequently, again by comparing the dimensions, 
we get an isomorphism
%
%%%%%%%%%%%%%%%%
%%% eq:cohom %%%
%%%%%%%%%%%%%%%%
%
\begin{equation} \label{eq:cohom}
\bigoplus_{\lambda \in X_{*}(T)_{+}} 
H^{2 \pair{\lambda - \nu}{\rho}}_{c} 
(S_{\nu} \cap \Gr^{\lambda},\,
 \bR^{n_{\lambda}} \mul_{!}' \ud{\BC})
\stackrel{\sim}{\rightarrow} 
\bigoplus_{\nu_{1}+\nu_{2}=\nu}
H^{2 \pair{\lambda_{1}+\lambda_{2} - \nu}{\rho}}_{c} 
(S_{\nu_{1},\,\nu_{2}} \cap 
 \Gr^{\lambda_{1},\,\lambda_{2}},\,\ud{\BC}),
\end{equation}
where $n_{\lambda} := 2 \pair{\lambda_{1}+\lambda_{2} - \lambda}{\rho}$. 

Now we note that 
the stalk $(\bR^{n_{\lambda}} \mul_{!}' \ud{\BC})_{x}$ 
at $x \in \Gr^{\lambda}$ for $\lambda \in X_{*}(T)_{+}$ 
admits a basis corresponding to 
top-dimensional irreducible components of 
$\mul^{-1} (x) \cap \Gr^{\lambda_{1},\,\lambda_{2}}$. 
If we normalize each class of such components to $1 \in \BC$, 
then we get
%
%%%%%%%%%%%%%%%%%
%%% eq:cohom2 %%%
%%%%%%%%%%%%%%%%%
%
\begin{equation} \label{eq:cohom2}
H^{2 \pair{\lambda - \nu}{\rho}}_{c} 
 (S_{\nu} \cap \Gr^{\lambda},\, \bR^{n_{\lambda}} \mul_{!}' \ud{\BC}) 
\stackrel{\tr}{\rightarrow} 
H^{2 \pair{\lambda - \nu}{\rho}}_{c} 
 (S_{\nu} \cap \Gr^{\lambda},\,\ud{\BC})
\end{equation}
for $\lambda \in X_{*}(T)_{+}$. 

By putting together the maps $\tr$ in \eqref{eq:cohom2}, 
Theorem~\ref{thm:MVBG}, \cite[Theorem~3.5]{MV2}, and 
the isomorphism \eqref{eq:cohom}, we obtain 
a commutative diagram

\vsp

\newcommand{\diaga}{%
${\displaystyle\bigoplus_{
  \nu_{1}+\nu_{2}=\nu }
 H^{2\pair{\lambda_{1}+\lambda_{2}-\nu}{\rho}}_{c} 
 (\Gr^{\lambda_{1},\,\lambda_{2}} \cap S_{\nu_{1},\,\nu_{2}},\,\ud{\BC})}$}
\newcommand{\diagb}{%
${\displaystyle\bigoplus_{
  \nu_{1}+\nu_{2}=\nu }
 \BC \Irr (\Gr^{\lambda_{1},\,\lambda_{2}} \cap S_{\nu_{1},\,\nu_{2}})}$}
\newcommand{\diagc}{%
$\disp{\bigoplus_{\lambda \in X_{*}(T)_{+}}} 
 H^{2 \pair{\lambda-\nu}{\rho}}_{c} 
 (S_{\nu} \cap \Gr^{\lambda},\,\bR^{n_{\lambda}} \mul_{!}' \ud{\BC})$}
\newcommand{\diagd}{%
$\disp{\bigoplus_{\lambda \in X_{*}(T)_{+}}} 
 \BC\Irr (\Gr^{\lambda} \cap S_{\nu})$}
\newcommand{\diage}{%
$\disp{\bigoplus_{\lambda \in X_{*}(T)_{+}}}
 H^{2 \pair{\lambda - \nu}{\rho}}_{c} 
 (S_{\nu} \cap \Gr^{\lambda},\,\ud{\BC})$,}

\vspace*{7mm}

\hspace*{30mm}
%WinTpicVersion3.08
\unitlength 0.1in
\begin{picture}( 34.7500, 21.1000)(  7.3000,-27.0500)
% STR 2 0 3 0
% 3 1000 600 1000 700 5 0
% \diaga
\put(10.0000,-7.0000){\makebox(0,0){\diaga}}%
% STR 2 0 3 0
% 3 1200 1690 1200 1790 5 0
% \diagc
\put(12.0000,-17.9000){\makebox(0,0){\diagc}}%
% STR 2 0 3 0
% 3 4200 1690 4200 1790 5 0
% \diagd
\put(42.0000,-17.9000){\makebox(0,0){\diagd}}%
% STR 2 0 3 0
% 3 4400 600 4400 700 5 0
% \diagb
\put(44.0000,-7.0000){\makebox(0,0){\diagb}}%
% STR 2 0 3 0
% 3 4200 2690 4200 2790 5 0
% \diage
\put(42.0000,-27.9000){\makebox(0,0){\diage}}%
% VECTOR 2 0 3 0
% 2 4200 1000 4200 1500
% 
\special{pn 8}%
\special{pa 4200 1000}%
\special{pa 4200 1500}%
\special{fp}%
\special{sh 1}%
\special{pa 4200 1500}%
\special{pa 4220 1434}%
\special{pa 4200 1448}%
\special{pa 4180 1434}%
\special{pa 4200 1500}%
\special{fp}%
% VECTOR 2 0 3 0
% 2 1200 2100 3200 2500
% 
\special{pn 8}%
\special{pa 1200 2100}%
\special{pa 3200 2500}%
\special{fp}%
\special{sh 1}%
\special{pa 3200 2500}%
\special{pa 3140 2468}%
\special{pa 3148 2490}%
\special{pa 3132 2508}%
\special{pa 3200 2500}%
\special{fp}%
% STR 2 0 3 0
% 3 1400 1100 1400 1200 5 0
% $\cong$
\put(14.0000,-12.0000){\makebox(0,0){$\cong$}}%
% STR 2 0 3 0
% 3 4400 1100 4400 1200 5 0
% $\star$
\put(44.0000,-12.0000){\makebox(0,0){$\star$}}%
% STR 2 0 3 0
% 3 4400 2100 4400 2200 5 0
% $\cl$
\put(44.0000,-22.0000){\makebox(0,0){$\cl$}}%
% STR 2 0 3 0
% 3 2900 700 2900 800 5 0
% $\cl$
\put(29.0000,-8.0000){\makebox(0,0){$\cl$}}%
% STR 2 0 3 0
% 3 2200 2300 2200 2400 4 0
% $\tr$
\put(22.0000,-24.0000){\makebox(0,0)[rt]{$\tr$}}%
% VECTOR 2 0 3 0
% 2 1200 1500 1200 1000
% 
\special{pn 8}%
\special{pa 1200 1500}%
\special{pa 1200 1000}%
\special{fp}%
\special{sh 1}%
\special{pa 1200 1000}%
\special{pa 1180 1068}%
\special{pa 1200 1054}%
\special{pa 1220 1068}%
\special{pa 1200 1000}%
\special{fp}%
% VECTOR 2 0 3 0
% 2 4200 2500 4200 2000
% 
\special{pn 8}%
\special{pa 4200 2500}%
\special{pa 4200 2000}%
\special{fp}%
\special{sh 1}%
\special{pa 4200 2000}%
\special{pa 4180 2068}%
\special{pa 4200 2054}%
\special{pa 4220 2068}%
\special{pa 4200 2000}%
\special{fp}%
% CIRCLE 2 0 3 0
% 4 2900 1600 2900 1300 3100 1900 2700 1900
% 
\special{pn 8}%
\special{ar 2900 1600 300 300  2.1587989 6.2831853}%
\special{ar 2900 1600 300 300  0.0000000 0.9827937}%
% SARROW 2 0 3 1
% 2 3079 1841 3066 1850
% 
\special{pn 8}%
\special{pa 3080 1842}%
\special{pa 3066 1850}%
\special{fp}%
\special{sh 1}%
\special{pa 3066 1850}%
\special{pa 3132 1828}%
\special{pa 3110 1820}%
\special{pa 3110 1796}%
\special{pa 3066 1850}%
\special{fp}%
% VECTOR 2 0 3 0
% 2 2600 600 3200 600
% 
\special{pn 8}%
\special{pa 2600 600}%
\special{pa 3200 600}%
\special{fp}%
\special{sh 1}%
\special{pa 3200 600}%
\special{pa 3134 580}%
\special{pa 3148 600}%
\special{pa 3134 620}%
\special{pa 3200 600}%
\special{fp}%
\end{picture}%

\vspace*{7mm}

\noindent
where the maps $\cl$ are isomorphisms obtained 
by taking the corresponding cycles. Here we note that 
by comparison of dimensions, 
each of $\ud{\BC}$ on $\Gr^{\lambda}$ 
(resp., $\Gr^{\lambda_{1},\,\lambda_{2}}$) can be replaced by 
$\IC_{\lambda} [ - \dim \Gr^{\lambda} ]$ 
(resp., $\IC_{\lambda_{1},\,\lambda_{2}}
[-\dim \Gr^{\lambda_{1},\,\lambda_{2}}]$).
Therefore, inverting the isomorphism \eqref{eq:cohom}
in the commutative diagram above yields 
the desired commutative diagram.
\end{proof}

\begin{rem}
In the theorem above, the proper direct image 
$\bR^{\bullet}\mul_{!}\IC_{\lambda_{1},\,\lambda_{2}}$ and 
the direct image $\bR^{\bullet} \mul_{*} \IC_{\lambda_{1},\,\lambda_{2}}$ 
are isomorphic since $\mul$ is a projective map 
when restricted to $\ol{\Gr^{\lambda_{1}}} \twp \ol{\Gr^{\lambda_{2}}}$.
\end{rem}

\begin{proof}[(Sketch of) Proof of Theorem~\ref{thm:BG}]
Note that $\Phi_{\lambda_{2}}(P_{2}) \cap S_{\nu_{2}}$ is 
$U(\CO)$-stable by Lemma~\ref{lem:stable}. It follows that 
\begin{equation*}
\dim \bigl(
 (\Phi_{\lambda_{1}}(P_{1}) \cap S_{\nu_{1}})^{\sim} 
 \times^{U(\CO)} 
 (\Phi_{\lambda_{2}}(P_{2}) \cap S_{\nu_{2}})
\bigr)
= \dim \Phi_{\lambda_{1}} (P_{1}) + \dim \Phi_{\lambda_{2}} (P_{2})
\end{equation*}
from the isomorphism (induced by the map $\mul$)
\begin{equation*}
(\Phi_{\lambda_{1}}(P_{1}) \cap S_{\nu_{1}}) \times 
(\Phi_{\lambda_{2}}(P_{2}) \cap S_{\nu_{2}})
\stackrel{\sim}{\rightarrow} 
(\Phi_{\lambda_{1}}(P_{1}) \cap S_{\nu_{1}})^{\sim}
\times^{U(\CO)} 
(\Phi_{\lambda_{2}}(P_{2}) \cap S_{\nu_{2}}). 
\end{equation*}
Therefore, using  Lemma~\ref{lem:dimint} and the statement 
preceding Definition~\ref{dfn:MVcycle}, we deduce that 
\begin{equation*}
\ol{ 
 (\Phi_{\lambda_{1}}(P_{1}) \cap S_{\nu_{1}})^{\sim} 
 \times^{U(\CO)} 
 (\Phi_{\lambda_{2}}(P_{2}) \cap S_{\nu_{2}})
} 
\in 
\Irr\left(
  \ol{\mul^{-1} (S_{\nu_{1}+\nu_{2}}) \cap 
  \Gr^{\lambda_{1},\,\lambda_{2}}} \right)
\end{equation*}
by comparison of dimensions.
Thus, by defining as in \eqref{eq:Phi-ll}, we obtain 
the desired map $\Phi_{\lambda_{1},\,\lambda_{2}}$ 
with properties (ii) and (iii). Moreover, 
properties (ii) and (iii) uniquely determine this map.

In the case that $G$ has semisimple rank $1$, we see that 
the map $\Phi_{\lambda_{1},\,\lambda_{2}}$ is 
indeed the canonical bijection that respects 
the BFG crystal structure by comparing 
Theorem~\ref{thm:BGc} with \cite[\S5.4]{BrGa}; 
note the (unusual) convention on the tensor product rule for 
crystals in \cite{BrGa}.
Also, it is straightforward to see that 
the commutative diagram of Theorem~\ref{thm:BGc} is compatible with 
the restriction to $P_{j}(\CK)$-orbits for each $j \in I$. 
Since a crystal structure is determined uniquely 
by weights and the behavior of Kashiwara operators for all simple roots, 
in view of \cite[\S5.2]{BrGa}, 
we conclude that the map $\Phi_{\lambda_{1},\,\lambda_{2}}$ is 
the canonical bijection that respects 
the BFG crystal structure for a general reductive $G$. 
Hence, by taking into account the commutative diagram of 
Theorem~\ref{thm:BGc}, property (i) can also be verified, as desired.
\end{proof}

%======================%
%     BIBLIOGRAPHY     %
%======================%

{\small
\setlength{\baselineskip}{13pt}
\renewcommand{\refname}{References}

}

\end{document}